\documentclass[11pt]{article}
\usepackage[utf8]{inputenc}
\usepackage{verbatim,amsmath,eufrak}
\usepackage{latexsym,amssymb,amsxtra,amsfonts,xcolor, bbm}
\usepackage{cite} 
\usepackage{makeidx}
\usepackage[hypertexnames=false,hyperfootnotes=false,colorlinks=true,linkcolor=blue,%
citecolor=purple,filecolor=magenta,urlcolor=cyan,unicode,linktocpage=true,pagebackref=false]{hyperref}


\renewcommand{\arraystretch}{1.2}
\makeatletter
\newdimen\normalarrayskip              
\newdimen\minarrayskip                 
\normalarrayskip\baselineskip
\minarrayskip\jot
\newif\ifold             \oldtrue            \def\new{\oldfalse}
\def\arraymode{\ifold\relax\else\displaystyle\fi} 
\def\eqnumphantom{\phantom{(\theequation)}}     
\def\@arrayskip{\ifold\baselineskip\z@\lineskip\z@
     \else
     \baselineskip\minarrayskip\lineskip2\minarrayskip\fi}
\def\@arrayclassz{\ifcase \@lastchclass \@acolampacol \or
\@ampacol \or \or \or \@addamp \or
   \@acolampacol \or \@firstampfalse \@acol \fi
\edef\@preamble{\@preamble
  \ifcase \@chnum
     \hfil$\relax\arraymode\@sharp$\hfil
     \or $\relax\arraymode\@sharp$\hfil
     \or \hfil$\relax\arraymode\@sharp$\fi}}
\def\@array[#1]#2{\setbox\@arstrutbox=\hbox{\vrule
     height\arraystretch \ht\strutbox
     depth\arraystretch \dp\strutbox
     width\z@}\@mkpream{#2}\edef\@preamble{\halign
\noexpand\@halignto
\bgroup \tabskip\z@ \@arstrut \@preamble \tabskip\z@ \cr}%
\let\@startpbox\@@startpbox \let\@endpbox\@@endpbox
  \if #1t\vtop \else \if#1b\vbox \else \vcenter \fi\fi
  \bgroup \let\par\relax
  \let\@sharp##\let\protect\relax
  \@arrayskip\@preamble}
%
%
%
%
\def\eqnarray{\stepcounter{equation}%
              \let\@currentlabel=\theequation
              \global\@eqnswtrue
              \global\@eqcnt\z@
              \tabskip\@centering
              \let\\=\@eqncr
 \halign to \displaywidth\bgroup
    \eqnumphantom\@eqnsel\hskip\@centering
    $\displaystyle \tabskip\z@ {##}$%
    \global\@eqcnt\@ne \hskip 2\arraycolsep
         $\displaystyle\arraymode{##}$\hfil
    \global\@eqcnt\tw@ \hskip 2\arraycolsep
         $\displaystyle\tabskip\z@{##}$\hfil
         \tabskip\@centering
    &{##}\tabskip\z@\cr}
\begingroup\ifx\undefined\newsymbol \else\def\input#1 {\endgroup}\fi

%

\def\be{\begin{eqnarray}}
\def\ee{\end{eqnarray}}
\def\nn{\nonumber}

\def\p{\partial}

\def\beq{\begin{equation}}
\def\eeq{\end{equation}}
\def\ba{\beq\new\begin{array}{c}}
\def\ea{\end{array}\eeq}
\def\be{\ba}
\def\ee{\ea}

\def\Tr{{\rm Tr}\,}

\def\diag{{\rm diag}\,}

\newfont{\Bbbb}{msbm7 scaled 1\@ptsize00}
\newcommand{\ZZ}{\mathbb{Z}}
\newcommand{\z}{\raise-1pt\hbox{$\mbox{\Bbbb Z}$}}
\newcommand{\cc}{\raise-1pt\hbox{$\mbox{\Bbbb C}$}}
\newcommand{\rr}{\raise-1pt\hbox{$\mbox{\Bbbb R}$}}

\def\lbr{\left <}
\def\rbr{\right >}

\def\Gr{{\rm Gr}\,}
\def\spn{{\rm span}\,}

\def\deg{\operatorname{deg} }
\newcommand{\ben}{\begin{eqnarray*}}
\newcommand{\een}{\end{eqnarray*}}

\def\QQ{\mathbb{Q}}
\def\ZZ{\mathbb{Z}}
\def\RR{\mathbb{R}}
\def\CC{\mathbb{C}}
\def\PP{\mathbb{P}}

\def\ii{\mathbf{i}}
\def\tt{\mathbf{t}}
\def\qq{\mathbf{q}}

\def\H{\mathcal{H}}
\def\F{\mathcal{F}}


\newfont{\alef}{msbm10 at 11pt}
\newfont {\goth}{eufm10 at 11pt}
\def\mathbb#1{\hbox{{\alef #1}}}

\let\@@savethanks\thanks
\def\thanks#1{\gdef\thefootnote{\alph{footnote}}\@@savethanks{#1}}

\unitlength=1.5pt
\baselineskip20pt

\newtheorem{theorem}{Theorem}[section]
\newtheorem{lemma}[theorem]{Lemma}
\newtheorem{proposition}[theorem]{Proposition}
\newtheorem{corollary}[theorem]{Corollary}
\newtheorem{remark}[theorem]{Remark}

\newenvironment{proof}[1][Proof.]{\begin{trivlist}
\item[\hskip \labelsep {\bfseries #1}]}{\end{trivlist}}


\hoffset=-1in
\oddsidemargin=25mm
\topmargin 0mm
\textheight 216mm
\textwidth 160mm
\makeatletter
\g@addto@macro \normalsize {%
 \setlength\abovedisplayskip{14pt plus 3pt minus 3pt}%
 \setlength\belowdisplayskip{14pt plus 3pt minus 3pt}%
  \setlength\abovedisplayshortskip{11pt plus 3pt minus 3pt}%
 \setlength\belowdisplayshortskip{11pt plus 3pt minus 3pt}%
}
\makeatother


\newcommand{\Addresses}{{
\bigskip
\footnotesize
\textsc{Center for Geometry and Physics, Institute for Basic Science (IBS), Pohang 37673, Korea}\par\nopagebreak

E-mail: \texttt{alexandrovsash at gmail.com}

\medskip

\textsc{Kavli IPMU (WPI), UTIAS, 
The University of Tokyo, Kashiwa,
Chiba 277-8583, Japan }\par\nopagebreak

E-mail: \texttt{todor.milanov@ipmu.jp}
}}


\title{ Matrix model for the total descendant potential of a simple
    singularity of type $D$ }

\author{
Alexander Alexandrov
\and 
Todor Milanov
}

\date{
\emph{To the memory of Boris Dubrovin, whose work will continue to
  inspire }}

\begin{document}

\maketitle

\begin{abstract}
\centerline{{\sc Abstract}}
\bigskip
We construct a Hermitian matrix model for the total descendant potential of a
simple singularity of type D similar to the Kontsevich matrix model
for the generating function of intersection numbers on the
Deligne--Mumford moduli spaces $\overline{\mathcal{M}}_{g,n}$.
\end{abstract}

\setcounter{footnote}{0}
\setcounter{tocdepth}{3}
\def\thefootnote{\arabic{footnote}}
\tableofcontents

\section{Introduction}

The motivation for our work comes from the celebrated
conjecture of Witten \cite{W} saying that the intersection
theory on the Deligne--Mumford moduli spaces $\overline{\mathcal{M}}_{g,n}$
is governed by the KdV hierarchy. The conjecture was first proved by 
Kontsevich \cite{K}. One of the key
ingredients in Kontsevich's argument is a certain Hermitian matrix model, which is
now known as the {\em Kontsevich matrix model}. The latter provides
both a tau-function solution to the KdV hierarchy and a Feynman diagram expansion
which can be related to the intersection theory on the moduli spaces
$\overline{\mathcal{M}}_{g,n}$. 
Witten's conjecture can be generalized in various ways, such
as Gromov--Witten theory \cite{W} and
Fan--Jarvis--Ruan--Witten (FJRW)
theory \cite{FJRW}.   It is natural to ask to what extent the
Kontsevich matrix model can be generalized too. There are only two
cases in which the answer to the above question is known to be positive, that is,
Gromov--Witten theory of the projective line $\PP^1$ and FJRW theory of the simple singularities of
type $A$. The case of a simple singularity of type $D$ is arguably the next on
the list and this is exactly the construction carried out in this
paper. 

We expect that there exists a matrix model for the FJRW invariants
corresponding to the remaining simple singularities $E_N$
($N=6,7,8$). However, at this point the construction of 
such a model looks quite challenging. The main issue from our point of
view is that the fermionic realizations of the  basic representations of the affine Lie algebras of
type $E_N$ ($N=6,7,8$) are not known. 
In the rest of this introduction let us concentrate on stating our
results.

\subsection{$(h,2)$-reduction of the 2-component BKP hierarchy}
The explicit form of the Kac--Wakimoto hierarchies of type D was
determined by ten Kroode and van de Leur \cite{KvL}. We are
interested in the principal case, that is, the Kac--Wakimoto hierarchy
corresponding to the conjugacy class of the Coxeter transformation. It
turns out that the principal Kac--Wakimoto hierarchy of type D is a
reduction of the so-called 2-component BKP hierarchy (see \cite{LWZ}
and Corollary 1 in \cite{CM}). 

Let
\be\nn
\mathbf{t}_a:=(t_{a,m})_{m\in \z_{\rm odd}^+}, \quad a=1,2
\ee
be two sequences of formal variables. We denote by $\ZZ_{\rm odd}$
the set of all odd integers and by $\ZZ_{\rm odd}^+$ the set of all
positive odd integers. Suppose that $h:=2N-2>0$ is an even integer. A formal power series 
\be\nn
\tau(\mathbf{t}_1,\mathbf{t}_2)\in \CC[\![\mathbf{t}_1,\mathbf{t}_2]\!] 
\ee
is said to be a {\em tau-function} of the $(h,2)$-reduction of the
2-component BKP hierarchy if the following Hirota bilinear equations
hold: 
\be\nn
\Omega_m(\tau\otimes \tau)=0,\quad m\geq 0.
\ee
Here $\Omega_m$ is the following bi-linear operator acting on
$\CC[\![\mathbf{t}_1,\mathbf{t}_2]\!] ^{\otimes 2}$
\be\nn
\operatorname{Res}_{z=0} \frac{dz}{z}
\Big(
z^{hm} \Gamma_1(\mathbf{t},z) \otimes \Gamma_1(\mathbf{t},-z) -
z^{2m} \Gamma_2(\mathbf{t},z) \otimes \Gamma_2(\mathbf{t},-z)
\Big),
\ee
where 
\be\nn
\Gamma_a(\mathbf{t}, z) := \exp\Big(
\sum_{m\in \z_{\rm odd}^+} t_{a,m} z^m\Big)
\exp\Big(-
\sum_{m\in \z_{\rm odd}^+} 2\partial_{t_{a,m}} \frac{z^{-m}}{m}\Big)
\ee
are {\em vertex operators}. 

Our main interest is in tau-functions satisfying the so-called
{\em string equation} 
\be\label{str_eqn}
L_{-1}\, \tau=0,\quad \quad 
L_{-1} := -\ii\, \frac{\partial}{\partial t_{1,1}} +
\sum_{a=1,2}\sum_{m\in \z_{\rm odd}}
: J^a_m J^a_{-m- h_a}:,
\ee
where $\ii:=\sqrt{-1}$, $h_1:=h:=2N-2$, $h_2=2$, 
\be\label{Jam}
J^a_m := 2\frac{\partial}{\partial t_{a,m}},\quad
J^a_{-m} := m t_{a,m},\quad m\in \ZZ_{\rm odd}^+,
\ee
and the normal ordering means that the annihilation operators, i.e.,
all $J^a_m$ with $m>0$, should be applied first. 
The existence of a tau-function satisfying the string equation was
proved by Vakulenko \cite{V}. 
Cheng and Milanov proved \cite{CM} that the total descendant
potential of $D_N$-singularity is also a solution of the
$(h,2)$-reduction satisfying the string equation. Vakulenko did not
discuss the uniqueness of his construction, while Cheng and Milanov
proved that the total descendant potential satisfies also a dilaton
constraint and that the string equation and the dilaton constraint
uniquely determine the tau-function. Our first result is 
that the dilaton constraint is redundant, that is, the following
theorem holds:
\begin{theorem}\label{t1}
  There exists a unique tau-function of the $(h,2)$-reduction of the 2-component BKP
  hierarchy satisfying the string equation (\ref{str_eqn}).
\end{theorem}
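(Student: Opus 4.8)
The plan is to reduce the problem to the already-known result of Cheng and Milanov that the string equation together with the dilaton constraint uniquely determine the tau-function. Thus it suffices to show that any tau-function $\tau$ of the $(h,2)$-reduction satisfying the string equation (\ref{str_eqn}) automatically satisfies the dilaton constraint. First I would write down the dilaton operator explicitly in terms of the currents $J^a_m$: it should be a Virasoro-type operator $L_0$ (a degree-zero operator, i.e.\ the Euler vector field plus a correction), and the claim to be proved is $L_0\,\tau = c\,\tau$ for the appropriate constant $c$, or equivalently $(L_0 - c)\tau = 0$.

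The key mechanism is the Virasoro bracket. The operators $L_{-1}$ and $L_0$ built from the normally-ordered bilinear expressions in the $J^a_m$ should satisfy (together with their partner $L_1$, or at least formally) a piece of the Virasoro algebra, in particular a relation of the schematic form $[L_0, L_{-1}] = L_{-1}$ (up to a shift coming from the inhomogeneous term $-\ii\,\partial_{t_{1,1}}$ and the shifts $h_a$ in the normal-ordered sums). The second ingredient is the compatibility of $L_0$ with the Hirota bilinear equations $\Omega_m(\tau\otimes\tau)=0$: one checks that $L_0$ (suitably co-multiplied, i.e.\ acting as $L_0\otimes 1 + 1\otimes L_0$ up to the central/constant term) commutes with the vertex-operator kernels $\Omega_m$, so that the locus of tau-functions is preserved by the flow generated by $L_0$. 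This is the standard ``additional symmetry'' argument in the theory of KP/BKP-type hierarchies.

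Given these two facts, the argument runs as follows. Set $\sigma := (L_0 - c)\tau$ for the correct normalization constant $c$. Using $[L_0, L_{-1}] = L_{-1}$ and the string equation $L_{-1}\tau = 0$, one gets $L_{-1}\,\sigma = L_{-1}(L_0-c)\tau = (L_0 - c - 1)L_{-1}\tau + [L_{-1},L_0]\tau - (\text{lower order})$, which after bookkeeping shows that $\sigma$ is again annihilated by $L_{-1}$ up to a controlled term, while the compatibility with $\Omega_m$ shows $\sigma$ lies in the tangent space to the tau-function locus. Then a grading/degree argument — examining the lowest-degree part of $\sigma$ in the variables $\mathbf{t}_1,\mathbf{t}_2$, where the inhomogeneous term $-\ii\,\partial_{t_{1,1}}$ forces the leading coefficient — pins down $c$ and forces $\sigma = 0$. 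Concretely, one normalizes $\tau$ by $\tau|_{\mathbf{t}=0} = 1$ (possible since the string equation with its $\partial_{t_{1,1}}$ term, combined with the Hirota equations, recursively determines all coefficients once this one is fixed), and then checks the dilaton equation coefficient-by-coefficient via the recursion, or more elegantly observes that both $\tau$ and $e^{s(L_0-c)}\tau$ solve the same string equation and Hirota system with the same normalization, hence are equal by Cheng--Milanov, giving $(L_0-c)\tau=0$.

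The main obstacle I anticipate is the careful verification of the two structural inputs in the presence of the shifts $h_1 = h$, $h_2 = 2$ and the inhomogeneous term: namely (i) that $L_0$ as the genuine dilaton operator really does have the right commutator with $L_{-1}$ and really does preserve the Hirota locus — the twist by different exponents $h^m$ versus $z^{2m}$ in the two vertex-operator terms of $\Omega_m$ means the two components scale differently under $L_0$, so one must check the kernel is still annihilated; and (ii) the degree/recursion argument establishing that the normalization $\tau|_{\mathbf{t}=0}=1$ together with the string equation determines $\tau$ uniquely, which is essentially where the redundancy of the dilaton constraint is really being cashed in. Everything else is bookkeeping with normally-ordered bilinears in the Heisenberg currents.
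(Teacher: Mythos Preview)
Your proposal has a genuine circularity problem. You want to invoke Cheng--Milanov's uniqueness result, which says that \emph{string plus dilaton} determines the tau-function, in order to show that dilaton follows from string. But at the key step --- ``both $\tau$ and $e^{s(L_0-c)}\tau$ solve the same string equation and Hirota system with the same normalization, hence are equal by Cheng--Milanov'' --- you cannot apply Cheng--Milanov: their uniqueness needs both constraints, and you do not yet know that either $\tau$ or $e^{s(L_0-c)}\tau$ satisfies the dilaton constraint. The Virasoro bracket $[L_0,L_{-1}]=L_{-1}$ does propagate the string equation along the $L_0$-flow, but that only gives you a one-parameter family of string-equation solutions; to collapse the family you need uniqueness under string alone, which is precisely Theorem~\ref{t1}. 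You acknowledge this yourself in obstacle~(ii): the recursion showing that string plus normalization determines $\tau$ is ``essentially where the redundancy of the dilaton constraint is really being cashed in'' --- but that recursion \emph{is} the theorem, and you do not supply it.

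The paper takes an entirely different route that avoids this trap. Via Proposition~\ref{prop:ref_t1} the problem is translated to the Grassmannian $\operatorname{Gr}_2^{(0)}$: one must show there is a unique maximally isotropic subspace $U$ invariant under the Kac--Schwarz operators $a$ and $b$. Uniqueness is then proved by explicitly constructing a basis of $U$. The first basis vector $\Psi$ is pinned down as the unique solution in $V$ of the \emph{quantum spectral curve} equation $A\Psi=0$ with $A=ba+\tfrac12-\ii^h a^{h+1}$ (Lemma~\ref{lemma1}); the remaining vectors $\Psi_k=(\ii a)^k\Psi$ and $\Phi_k=\ii z^{2k-1}e_2$ are generated from it by the Kac--Schwarz operators, and together they are shown to span $U$ (formula~(\ref{basisexp})). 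This argument is constructive --- the $\Psi_k$ are realized as steepest-descent asymptotics of explicit integrals --- which is essential input for the matrix-model construction in Section~\ref{sec:mi}. Even if your Virasoro-flow argument could be repaired, it would not produce this explicit basis.
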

This result is a direct analog of the well-known statement for the
$A_N$ singularities (for example see \cite{KS}). The problem of
characterizing tau-functions of the Drinfeld--Sokolov hierarchies via
the string equation was studied in a recent paper by
Cafasso and Wu  \cite{CaWu}. We expect that Theorem \ref{t1} can also be derived from their results.
Theorem \ref{t1} is
very important for identifying tau-functions of the 2-component BKP
hierarchy that have different origin. In fact, the unique tau-function
specified by Theorem \ref{t1} arises in at least 3 other different
ways: the total descendent potential of $D_N$-singularity,
a tau-function of the principal Kac--Wakimoto hierarchy of type $D_N$,
and the generating function of FJRW-invariants of the Berglund--H\"ubsch
dual singularity $D_N^T$. We refer to Section \ref{sec:geometry} for
more details. Let us point out that our matrix model will be obtained by using
the theory of the 2-component BKP hierarchy. Section
\ref{sec:geometry} is written only for people who might be interested
in the applications of our result to geometry. Especially, the FJRW theory might provide a geometric approach
to our model similar to Kontsevich's argument in \cite{K}. In other words,
our matrix model could be viewed as a motivation to look for a
combinatorial model for the virtual fundamental cycle in the FJRW
theory of the Berglund--H\"ubsch dual singularity $D_N^T$. 

In the rest of this paper, except for Section \ref{S_3}, we will be working with the unique
tau-function $\tau(\mathbf{t}_1,\mathbf{t}_2)$ specified by Theorem
\ref{t1}. Sometimes, we denote it also by
$\tau^{\rm CM}(\mathbf{t}_1,\mathbf{t}_2)$ in order to emphasize that we follow
the normalization of Cheng--Milanov \cite{CM}.  
The precise identification between 
$\tau^{\rm  CM}(\mathbf{t}_1,\mathbf{t}_2)$, 
the total descendant potential of $D_N$-singularity, and the
generating function of FJRW-invariants of the Berglund--H\"ubsch dual
$D_N^T$ is given in Section \ref{sec:BKP-D}.

\subsection{Matrix model}\label{sec:mm}
Let us introduce two non-degenerate diagonal matrices
\be\nn
Z_a=\diag(z_{a,1},\dots,z_{a,N_a}),\quad a=1,2,
\ee
such that, 
$\operatorname{Arg}(z_{1,i}) =\tfrac{\pi}{2(h+1)}$,
$\operatorname{Arg}(z_{2,j}) =0$, that is, $z_{2,j}\in \RR_{> 0}$, and
$N_1+N_2$ is an even number. We will refer to the substitution
\be\nn
t_{a,m} = -\frac{2}{m} \, \Tr(Z_a^{-m}) = -2 \sum_{i=1}^{N_a}
\frac{(z_{a,i})^{-m}}{m}
\ee
as the  {\em Miwa parametrization}, while the formal series 
$\tau(Z_1,Z_2):=\tau(\mathbf{t}_1,\mathbf{t}_2)|_{t_{a,m}:=
-\tfrac{2}{m}\operatorname{Tr}(Z_a^{-m})}$ 
will be called the tau-function in the Miwa parametrization. 

Let us denote by ${\mathcal H}_N$ the linear space of all $N\times N$  Hermitian matrices, and by  
${\mathcal H}_N^+$ the space of all positive definite $N\times N$  Hermitian matrices.
The space $\mathcal{H}_N$ is equipped with a canonical measure 
\begin{align}
\nn
  \left[d X\right] & := p_*\left(
                     \Delta_{N}(x)^2  \left[d U\right] \,
                     \prod_{i=1}^{N} d x_i
                     \right),
\end{align}
where for a diagonal matrix $a=\operatorname{diag}(a_1,\dots,a_N)$ we
denote by 
$\Delta_N(a):=\prod_{1\leq i<j\leq N} (a_j-a_i)$ the Vandermonde
determinant, $\left[d U\right] $ is the Haar measure on the unitary
group $U(N)$, and $p_*$ is the pushforward operation, i.e.,
integration along the fiber, with respect to the
proper surjective map $p:U(N)\times \RR^N \to \H_N$,
$p(U,x):= U \diag(x_1,x_2,\dots,x_N)U^\dagger$. 
For our purposes, it is more convenient to work with the following measure:
\begin{align}
\nn
\widetilde{\left[d X\right]} & :=\frac{\left[d X\right]}{\sqrt{\det\left(X\otimes I_N+I_N\otimes X\right)}},
\end{align}
where $I_N$ is the identity matrix of size $N$.

Let us introduce also the interaction term
\be\nn
S(X,Y)=\det\left(\frac{X\otimes I_{N_2}-I_{N_1}\otimes Y}{X\otimes I_{N_2}+I_{N_1}\otimes Y}\right),
\ee
and the potential
\be\label{eq_Wpot}
W(X,Z)= \frac{{\bf i}^h}{h+1}X^{h+1}-XZ.
\ee
The main result of this paper can be stated as follows. 
\begin{theorem}\label{t2}
  Under the above notation, the tau-function in the Miwa parametrization
  coincides with the asymptotic expansion of a matrix integral:  
\be
\nn
\tau(Z_1,Z_2)\sim \frac{e^{\frac{\ii h}{h+1} \Tr Z_1^{h+1}}}{{\cal N}} 
\int _{e^{\frac{(h+2)\pi}{2(h+1)}\ii}{\mathcal H}_{N_1}}  \widetilde{\left[d X\right]} \, 
e^{\Tr  W(X,Z_1^h) } 
\int _{{\mathcal H}_{N_2}^+} \widetilde{\left[d Y\right]}\,  
e^{\Tr W(Y,Z_2^2)}  S(X,Y),
\ee
where the normalization factor $\mathcal{N}$ is given explicitly by \eqref{formula-N}.
\end{theorem}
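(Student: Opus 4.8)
The plan is to evaluate both sides of the claimed identity as one and the same block Pfaffian of one-dimensional Kontsevich-type integrals, thereby reducing the theorem to the fermionic description of $\tau^{\rm CM}$ developed earlier (cf.\ Section~\ref{sec:BKP-D}). On the right-hand side I would first pass to eigenvalues: writing $X=U\,\diag(x_1,\dots,x_{N_1})\,U^\dagger$ and $Y=V\,\diag(y_1,\dots,y_{N_2})\,V^\dagger$, the measure $\widetilde{[dX]}$ supplies the factor $\Delta_{N_1}(x)^2\prod_{i,i'}(x_i+x_{i'})^{-1/2}$ (and similarly for $Y$), the interaction term depends only on eigenvalues and equals $\prod_{i,j}\tfrac{x_i-y_j}{x_i+y_j}$, and inside $e^{\Tr W}$ only the linear pieces $-\Tr(XZ_1^h)$ and $-\Tr(YZ_2^2)$ involve the angular variables. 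I would then carry out the $U$- and $V$-integrals by the Harish-Chandra--Itzykson--Zuber formula, replacing $e^{-\Tr(XZ_1^h)}$ by $\det\!\big(e^{-x_i(z_{1,j})^h}\big)/\big(\Delta_{N_1}(x)\,\Delta_{N_1}(z_1^h)\big)$ and likewise for $Y$. Collecting the surviving Vandermonde and $\prod(x_i+x_{i'})^{-1/2}$ factors turns the $x$-weight into $\prod_{i<i'}\tfrac{x_{i'}-x_i}{x_{i'}+x_i}$ times single-variable weights; combined with the $y$-version and the cross factor, Schur's Pfaffian identity recognizes this as the Pfaffian of the antisymmetric kernel $\tfrac{u-v}{u+v}$ over the union of all eigenvalues (and here the hypothesis that $N_1+N_2$ is even is exactly what makes this Pfaffian well defined). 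A de Bruijn-type resummation then collapses the eigenvalue integral to a Pfaffian whose entries are two-variable integrals built out of the one-dimensional integrals $\int x^{k}e^{W(x,(z_{1,j})^h)}\,dx$ on the rotated ray $e^{\frac{(h+2)\pi}{2(h+1)}\ii}\RR$ and $\int y^{k}e^{W(y,(z_{2,j})^2)}\,dy$ on $\RR_{>0}$.

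On the left-hand side I would use that, under the Miwa substitution $t_{a,m}=-\tfrac2m\Tr(Z_a^{-m})$, the creation part of the vertex operator $\Gamma_a(\mathbf t,z)$ becomes multiplication by $\prod_{i=1}^{N_a}\tfrac{z_{a,i}-z}{z_{a,i}+z}$, so the bosonic--fermionic dictionary for the $(h,2)$-reduced $2$-component BKP hierarchy writes $\tau^{\rm CM}(Z_1,Z_2)$ — after extracting the same Vandermonde-type normalizations produced above — as a block Pfaffian over the combined Miwa points, with the two diagonal blocks coming from the self-pairings inside $Z_1$ and inside $Z_2$ and the off-diagonal block from the $Z_1$--$Z_2$ pairings. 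Its entries are pairings of the maximal isotropic subspace of the Sato Grassmannian that defines $\tau^{\rm CM}$, and by the spectral-curve/ODE characterization of that point (the same data that encode the $(h,2)$-reduction and the string equation~\eqref{str_eqn}) a basis of this subspace is given precisely by the asymptotic expansions as $z\to\infty$ of the very integrals $\int x^{k}e^{W(x,z)}\,dx$, $\int y^{k}e^{W(y,z)}\,dy$ that appeared on the matrix side. Matching the two block Pfaffians entry by entry then gives the identity up to an overall $z$-independent constant, which I would absorb into $\mathcal N$.

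Finally I would pin down the prefactor $e^{\frac{\ii h}{h+1}\Tr Z_1^{h+1}}$ and the normalization $\mathcal N$ of \eqref{formula-N} by a single stationary-phase comparison in the regime $z_{a,i}\to\infty$, which is the Miwa image of $\mathbf t=0$ where $\tau^{\rm CM}=1$: evaluating $\Tr W(\cdot,Z_1^h)$ at the relevant critical point produces $e^{-\frac{\ii h}{h+1}\Tr Z_1^{h+1}}$ — so the stated prefactor is its inverse — while the Gaussian fluctuation around that point, together with the leading term of the $Y$-integral, produces $\mathcal N$. The hard part will be the middle identification: showing that the asymptotic expansions of the Kontsevich integrals on the prescribed contours genuinely span the correct maximal isotropic subspace — i.e.\ that they satisfy the right linear ODEs, have the correct leading terms, and sit on the right side of the Stokes lines, so that the de Bruijn Pfaffian from the matrix side is literally the fermionic Pfaffian from the BKP side — and, intertwined with this, making the symbol ``$\sim$'' precise by justifying the term-by-term asymptotic evaluation of the rotated Hermitian integral. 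If that route proves too delicate, the fallback is to bypass the Grassmannian altogether: rewrite the right-hand side as a fermionic correlator, deduce directly that it is a tau-function of the $(h,2)$-reduced $2$-component BKP hierarchy obeying the string equation~\eqref{str_eqn} (the latter via integration by parts in the matrix integral), and then invoke the uniqueness Theorem~\ref{t1}.
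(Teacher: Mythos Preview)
Your proposal is essentially the paper's own proof, run in both directions toward the common Pfaffian rather than linearly from $\tau$ to matrix integral: the paper uses exactly the neutral-fermion Pfaffian for $\tau(Z_1,Z_2)$ (this is Section~\ref{S_3}, not Section~\ref{sec:BKP-D}), the integral basis $\Psi_k$ for the Grassmannian point (Section~\ref{sec:Gr}), and then de Bruijn, Schur's Pfaffian, and HCIZ to assemble the matrix integral (Section~\ref{sec:mi}).

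The one subtlety you should anticipate in your ``hard part'' is that the Grassmannian subspace is \emph{not} spanned by the Kontsevich-type integrals alone: there are extra vectors $\Phi_k=\ii z^{2k-1}e_2$ (see \eqref{Phik}), and correspondingly the $(2,2)$-block two-point function $\phi_{2,2}$ carries an extra geometric-series tail beyond the bilinear sum in $\Psi_k$ (Proposition~\ref{kern_basis}). On the matrix side this manifests as the double integral over $\gamma_2^*\times\gamma_2^*$ having an integrable singularity of the kernel $\tfrac{x-y}{x+y}$ at the critical point $(0,0)$; the paper computes this singular contribution directly (Lemma~\ref{le:int_sing}) and shows it exactly accounts for the tail, so that the genuine asymptotic expansion of the double integral equals $\tilde\phi_{2,2}$ rather than $\phi_{2,2}$ (Lemma~\ref{l_57}). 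The algebraic input that makes the $(2,2)$ decomposition go through is the symmetry $\tau(\tt_1,\tt_2)=\tau(\tt_1,-\tt_2)$ (Lemma~\ref{le:symm}), itself a consequence of the uniqueness Theorem~\ref{t1} --- so your fallback is in fact already baked into the main route.
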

The asymptotic equality in Theorem \ref{t2} is interpreted via the
formal expansion of the matrix integrals according to the steepest
descent method, see Section \ref{sec:di} for more details. Establishing
the analytic properties of our matrix integral seems to be a separate
project, so we do not pursue it in this paper.  Let us just make a
comment about a possible relation to the notion of {\em strongly
  asymptotically developable} functions (see \cite{Ma} for some
background). We expect that  
the matrix integral in Theorem \ref{t2}, after choosing appropriate
integration cycles, is analytic and strongly asymptotically
developable as $z=(z_{a,j})\in (\PP^1)^{N_1+N_2}$ tend to the divisor
$\prod_{a,j} z_{a,j}=\infty$ in an appropriate multi-sector
and that $\tau(Z_1,Z_2)$ is the formal series of the corresponding
asymptotic expansion.  Note however, that since the interacting term
$S(X,Y)$
is a meromorphic function, the multivariable asymptotics that we need
are slightly more complicated than the ones in \cite{Ma}. 

\begin{remark}
The symmetry algebra of the $BKP$ hierarchy is a certain version of the central extension of the Lie algebra $B_\infty$. Because of that it might be a bit surprising that topological solution of the
2-component $BKP$ hierarchy is described in terms of a Hermitian
(not orthogonal) matrix integral. 
\end{remark}

\begin{remark}
While the integrals over $X$ and $Y$ looks pretty similar, they are
essentially different. The integral over $X$ is an asymptotic
expansion at the vicinity of the critical point $X=\ii Z_1$. It is
typical for the generalized Kontsevich models, in particular, to the 
integral for the $A_N$ singularity, that can be described as a
perturbative expansion of the Gaussian integral.  
The integral over $Y$ is an asymptotic expansion at the vicinity of
the point $Y=0$. It is essentially the matrix Laplace transform and,
in a certain sense, is much simpler.  
\end{remark}

\subsection{Organization of the paper}
In Section \ref{sec:geometry} we give a precise identification of our
tau-function with the total descendent potential of the simple
singularity of type $D_N$ and the generating function of
FJRW-invariants of the Berglund--H\"ubsch dual singularity
$D_N^T$. The reader not interested in the applications to geometry
could skip this Section, because it is not logically connected with
the rest of the paper. 

In Section \ref{S_3} we introduce the main tools of this paper, i.e.,
the formalism of neutral fermions and prove that in the Miwa
parametrization every tau-function of the 
2-component BKP hierarchy can be written as a ratio of two Pfaffians, where the numerator is given by a Pfaffian of a matrix
whose entries are given by 2-point correlators. This formula is a direct analog of the well-known determinant description of the KP tau-function in the Miwa parametrization. 
In Section
\ref{sec:Gr} we give a proof of Theorem \ref{t1}. Our strategy is
similar to \cite{KS}, that is, first, we recall the
Grassmannian description of the 2-component BKP hierarchy. The string
equation yields a certain ordinary differential equation that can be
solved in terms of certain steepest descent
integrals. Then we prove that the subspace corresponding to the point
of the BKP Grassmannian parametrizing the tau-function of interest has
a basis which can be reconstructed uniquely from the steepest descent
asymptotic of the one-dimensional integrals. Finally, in
Section \ref{sec:mi} we extend the methods from Section \ref{sec:Gr}
in order to obtain formulas for the entries of the Pfaffian matrix
from Section \ref{S_3} in terms of asymptotic expansion of certain
double integrals. This is done in Proposition \ref{kern_basis}, which
could be viewed as the key step in proving Theorem \ref{t2}. Let us
also point out that in order to prove Proposition \ref{kern_basis} we
had to use a certain symmetry of the tau-function (see Lemma \ref{le:symm}),
which on the other hand is established via Theorem \ref{t1}. In other
words, Theorem \ref{t1} plays an important role in the proof of
Theorem \ref{t2}! Finally, in the last section we make some further
remarks and outline some further direction for investigation. 

{\bf Acknowledgements.}
The work of A.A. is partially supported by IBS-R003-D1 and by RFBR grant 18-01-00926.
The work of T.M. is partially supported by JSPS Grant-In-Aid (Kiban C)
17K05193 and by the World Premier International Research Center
Initiative (WPI Initiative),  MEXT, Japan. T.M. would like to thank
Emil Horozov and Mikhail Kapranov for useful conversations on matrix
models and multivariable asymptotics. A.A. and T.M. would like to
thank anonymous referees for the suggested improvements.

\section{Geometric interpretation of the tau-function}\label{sec:geometry}

The goal of this section is to give the precise identification between
the tau-function from Theorem \ref{t1}, the total descendant potential
of a simple singularity of type $D$, and the generating function of
the FJRW invariants of type $D^T$, where $D^T$ denotes the so-called
Berglund--H\"ubsch (BH) dual. 

\subsection{FJRW-invariants}\label{sec:FJRW-inv}
The BH dual singularity $D_N^T$ is given by the polynomial $W(x,y):=x^{N-1} y
+y^2$. Note that this polynomial is not a singularity of type $D_N$.
According to \cite{FJRW}, the intersection theory on the moduli space
of $W$-spin curves corresponding to the potential $W(x,y)= x^{N-1} y
+y^2$ yields a Cohomological Field Theory (CohFT) on an
$N$-dimensional vector space 
\be\nn
\H_W:=\mathbb{C} e_0 \bigoplus \bigoplus_{i=1}^{N-1} \mathbb{C} e_{2i-1},
\ee
that has the following properties: Let us assign degrees
\be\nn
\deg(e_0)=\frac{N-2}{2N-2},\quad \deg(e_{2i-1}) = \frac{i-1}{N-1}
\quad (1\leq i\leq N-1).
\ee
\begin{enumerate}
\item[(i)] {\em Dimension constraint}: if $\alpha_i\in \H_W$ are homogeneous
  elements, then the correlator
  \begin{equation}\label{correlator}
  \lbr \alpha_1\psi^{k_1},\dots,\alpha_m\psi^{k_m}\rbr^{\rm FJRW}_{g,m}
  \end{equation}
  is non-zero only if
  \be\nn
  \sum_{i=1}^m (\deg(\alpha_i) + k_i) = 3g-3 + m + D(1-g),
  \ee
  where $D:=1-\tfrac{1}{N-1}$ is the {\em conformal dimension} of the
  CohFT. Let us point out that $D=1-\tfrac{2}{h}$, where $h=2N-2$ is
  the Coxeter number for the root system of type $D_N$.
\item[(ii)] {\em Euler characteristics constraint}:
  Put $\Theta^0:=(0,0)$, $\Theta^{2i-1} = (\tfrac{2i-1}{h},\tfrac{1}{2})$
  ($1\leq i\leq N-1$). Then the correlator (\ref{correlator}) is non
  zero only if
  \be\nn
    \frac{1}{h} (2g-2+m) -
    (\Theta^{\alpha_1}_1 +\cdots + \Theta^{\alpha_m}_1)  \in \mathbb{Z}, \\
    \frac{1}{2} (2g-2+m) -
    (\Theta^{\alpha_1}_2 +\cdots + \Theta^{\alpha_m}_2) 
      \in \mathbb{Z},
  \ee
  where $\Theta^\alpha_s$ for a homogeneous element $\alpha\in \mathbb{C}
  e_i$ stand for the $s$-component of $\Theta^i$.
\item[(iii)] Genus-0 three point correlators (see \cite{FJRW}, Section
  5.2.2):

  a) 
  All 3-point correlators $\lbr \alpha_1,\alpha_2,\alpha_3\rbr^{\rm FJRW}_{0,3}$
  satisfying the dimension constraint
  $\deg(\alpha_1)+\deg(\alpha_2)+\deg(\alpha_3)=D$ are equal to 1
  unless one of the insertions $\alpha_i\in \mathbb{C} e_0$. 

  b) The only non-zero 3-point correlator $\lbr
  \alpha_1,\alpha_2,\alpha_3\rbr^{\rm FJRW}_{0,3}$, involving an insertion
  $\alpha_i\in \mathbb{C}e_0$ is
  \be\nn
  \lbr e_0, e_0,e_1\rbr^{\rm FJRW}_{0,3} = -\frac{1}{N-1}. 
  \ee

\item[(iv)] The 4-point correlator (see \cite{FJRW}, Section 6.3.7):
\be\nn
\lbr e_3, e_3, e_{2N-5},e_{2N-3}\rbr^{\rm FJRW}_{0,4} = \frac{1}{h}.
\ee

\item[(v)] $e_1$ is a unit of the CohFT.
\end{enumerate}
Fan--Jarvis--Ruan prove the following reconstruction result (see
\cite{FJRW}, Theorem 6.2.10, part (3)):
properties (iii) and (iv) uniquely determine the CohFT, that is, the
correlators (\ref{correlator}) are uniquely determined from (iii),
(iv), and the axioms of a CohFT.

Let us recall the total descendant potential of the FJRW theory, that
is, the following generating function of FJRW-invariants:
\be\nn
\mathcal{D}^{\rm FJRW}(\hbar,\mathbf{t}^{\rm FJRW}):=
\exp\left(
\sum_{g,\kappa} 
\hbar^{g-1} \lbr 
e_{i_1} \psi^{k_1},\dots , e_{i_m} \psi^{k_m}
\rbr_{g,m} 
\frac{
t^{\rm FJRW}_{k_1,i_1}\cdots t^{\rm FJRW}_{k_m,i_m}
}{m!}
\right),
\ee
where the sum is over all integers $g\geq 0$ and over all sequences
$\kappa=((k_1,i_1),\dots,(k_m,i_m))$ of pairs.

\subsection{Mirror symmetry}\label{Mirror}
Let us recall the mirror symmetry result of Fan--Jarvis--Ruan (see
\cite{FJRW}, Theorem 6.1.3, part (3)). Let 
\be\nn
f(x_1,x_2,x_3):= x_1^2 x_2-x_2^{N-1} + x_3^2
\ee
be the $D_N$ singularity. Note that $f$ is quasi-homogeneous: if
we assign degrees $\tfrac{N-2}{2N-2}, \tfrac{1}{N-1},$ and
$\tfrac{1}{2}$ to respectively $x_1, x_2,$ and $x_3$, then $f$ is
homogeneous of degree 1. Therefore, the {\em local algebra}
\be\nn
H_f:=\CC[x_1,x_2,x_3]/(\p_{x_1}f,\p_{x_2}f,\p_{x_3}f)
\ee
is naturally a graded vector space. Let us recall also the so-called
{\em Grothendieck residue} on $H_f$.   
Note that the determinant of the Hessian matrix
$\operatorname{Hess}(f):=\operatorname{det}(\p^2f/\p x_i\p x_j)$ is a 
homogeneous polynomial of degree $D=\tfrac{N-2}{N-1}$. The maximal
possible degree of a homogeneous subspace in the local algebra is $D$.
The corresponding homogeneous subspace is 1 dimensional and hence it
is spanned by the class $[\operatorname{Hess}(f) ]$. Here
if $\psi\in \CC[x_1,x_2,x_3]$, then $[\psi]$ denotes the equivalence
class of $\psi$ in the local algebra. Given an element $\psi$ in the
local algebra, let us define the Grothendieck residue
$\operatorname{Res}(\psi)$ by the formula 
$N \psi_D= \operatorname{Res}(\psi)  [\operatorname{Hess}(f)]$, where $\psi_D$ is the
homogeneous component of $\psi$ of maximal degree. Alternatively, the
Grothendieck residue coincides with the multidimensional residue
\be\nn
\operatorname{Res}(\psi) = \operatorname{Res}\
\frac{\psi dx_1 \wedge dx_2\wedge dx_3}{\p_{x_1}(f)\p_{x_2}(f)
  \p_{x_3}(f)}, 
\ee
that is, the LHS above is by definition $(2\pi \mathbf{i})^{-3}\times$
the integral of the meromorphic 3-form along an appropriate toroidal
cycle $|\p_{x_1}(f)|=|\p_{x_2}(f)|= |\p_{x_2}(f)|=\epsilon$.
A direct computation yields $[\operatorname{Hess}(f) ]=[-4N
x_1^2]$. Let us fix a basis of $H_f$ represented by the monomials
$\phi_i(x)=x_2^{i-1}$ ($1\leq i\leq N-1$) and $\phi_N(x)=2x_1$. Then
the residue pairing takes the form
\be\nn
\operatorname{Res}(\phi_i \phi_j) = -\frac{1}{2h}\delta_{i+j,N},\quad
1\leq i,j\leq N-1, \\
\operatorname{Res}(\phi_i\phi_N) = -\delta_{i,N},\quad
1\leq i\leq N.
\ee
Using Saito's theory of primitive forms, we can define a Frobenius
structure on the space of miniversal deformations of $f$ with
primitive form $\omega=dx_1\wedge dx_2\wedge dx_3$.
Givental's higher genus reconstruction yields a total descendant
potential  
\be\nn
\mathcal{D}^{\rm SG}(\hbar,\tt^{\rm SG}) =\exp \left(
\sum_{g,\kappa} \hbar^{g-1} \lbr \phi_{i_1}\psi^{k_1},\dots,
\phi_{i_m}\psi^{k_m}\rbr_{g,m}^{SG} 
\frac{
t^{\rm SG}_{k_1,i_1}\cdots t^{\rm SG}_{k_m,i_m}
}{m!}\right).
\ee
Put $c:=\tfrac{\ii}{\sqrt{2h}} \, 2^{-(N-2)/h}$  and let us define the
following map:  
\be\nn
\operatorname{Mir}: H_f \to \H_W \\
\phi_i:= x_2^{i-1}\mapsto 2^{\tfrac{i-1}{N-1}} \, e_{2i-1}\quad (1\leq
i\leq N-1) \\
\phi_N:= 2x_1\mapsto -h\mathbf{i}\, 2^{\tfrac{N-2}{2N-2}} \, e_{0},
\ee
where $\mathbf{i}:=\sqrt{-1}$. Mirror symmetry between the
FJRW-invariants and the SG-invariants can be stated as follows:
\be\label{FJRW=SG}
c^{2g-2}
\lbr \phi_{i_1}\psi^{k_1},\dots,\phi_{i_m}\psi^{k_m}
\rbr^{\rm SG}_{g,m} = \lbr
\operatorname{Mir}(\phi_{i_1})\psi^{k_1},\dots,
\operatorname{Mir}(\phi_{i_m})\psi^{k_m}
\rbr^{\rm FJRW}_{g,m}.
\ee
Let us sketch the proof of the above formula. The idea is to establish
first two special cases. The remaining identities follow from the
special cases and the
axioms of CohFT according to the reconstruction
theorem of Fan--Jarvis--Ruan (see \cite{FJRW}, Theorem 6.2.10, part
(3)). The first special case is to prove \eqref{FJRW=SG} when $g=0$,
$m=3$, and  $k_1=\cdots =k_m=0$. This however is straightforward,
because the explicit formulas for the FJRW invariants of this type are
already known (see property (iii) in Section \ref{sec:FJRW-inv}),
while for the SG-invariants we have 
\be\nn
\lbr \phi_i,\phi_j,\phi_k\rbr^{\rm SG} = \operatorname{Res}(\phi_i\phi_j\phi_k)
\ee
and the residue pairing is easy to compute. We leave the details as an
exercise. The second special case is the identity involving the
SG-correlator $\lbr x_2, x_2,x_2^{N-3},x_2^{N-2} \rbr_{0,4}^{\rm
  SG}$. In that case the RHS of \eqref{FJRW=SG} is
\be\nn
2^{\frac{2}{N-1}+\frac{N-3}{N-1}+\frac{N-2}{N-1}}
\lbr e_3, e_3, e_{2N-5}, e_{2N-3}\rbr^{\rm FJRW}_{0,4} = 
2^{1 +\frac{N-2}{N-1}} h^{-1} = -c^{-2} h^{-2},
\ee
where we used the formula from property (iv) in Section
\ref{sec:FJRW-inv}. Therefore, the identity follows from the following 
lemma:
\begin{lemma}\label{le:4pt-cor}
  The 4-point correlator
  \be\nn
  \lbr x_2, x_2, x_2^{N-3},x_2^{N-2}\rbr_{0,4}^{\rm SG} = -\frac{1}{h^2}.
  \ee
\end{lemma}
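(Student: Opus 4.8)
The plan is to evaluate $\lbr x_2,x_2,x_2^{N-3},x_2^{N-2}\rbr^{\rm SG}_{0,4}$ directly from Saito's Frobenius structure on the miniversal deformation of $f=x_1^2x_2-x_2^{N-1}+x_3^2$ with primitive form $\omega=dx_1\wedge dx_2\wedge dx_3$. Write $t=(t^1,\dots,t^N)$ for flat coordinates dual to the local-algebra basis $\phi_1,\dots,\phi_N$ (so $\phi_i=x_2^{i-1}$, $\phi_N=2x_1$, and $\deg t^i=1-\deg\phi_i$ with $\deg x_2=\tfrac1{N-1}$), let $G(x,t)=f+\sum_i t^i\phi_i+(\text{higher order in }t)$ be the corresponding miniversal deformation and $\psi_i=\partial_{t^i}G$ the flat vector fields. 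The $t$-dependent structure constants are
\[
c_{ijk}(t)=\operatorname{Res}\frac{\psi_i\,\psi_j\,\psi_k\,\omega}{G_{x_1}G_{x_2}G_{x_3}},\qquad c_{ijk}(0)=\operatorname{Res}(\phi_i\phi_j\phi_k),
\]
and the genus-$0$ primary correlators are their derivatives; differentiating in the $\phi_2=x_2$ direction,
\[
\lbr x_2,x_2,x_2^{N-3},x_2^{N-2}\rbr^{\rm SG}_{0,4}=\partial_{t^2}c_{2,N-2,N-1}(t)\big|_{t=0}.
\]

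I would now differentiate the residue formula. The first-order data at $t=0$ are $\psi_i|_0=\phi_i$, $\partial_{t^2}G_{x_l}|_0=\partial_{x_l}\phi_2=\partial_{x_l}x_2$, and $\partial_{t^2}\psi_i|_0=\partial_{t^2}\partial_{t^i}G|_0$, the last being twice the coefficient of $t^2t^i$ in the flat deformation. A weighted-degree count shows that $\partial_{t^2}\partial_{t^2}G|_0$ and $\partial_{t^2}\partial_{t^{N-2}}G|_0$ carry negative weight and hence vanish, leaving only the constant $\rho:=\partial_{t^2}\partial_{t^{N-1}}G|_0$. The product rule applied to $c_{2,N-2,N-1}$, together with $\phi_2\phi_{N-2}=x_2^{N-2}$, $\phi_2\phi_{N-2}\phi_{N-1}=x_2^{2N-4}$ and $\operatorname{Res}(\phi_i\phi_j)=-\tfrac1{2h}\delta_{i+j,N}$, then gives
\[
\lbr x_2,x_2,x_2^{N-3},x_2^{N-2}\rbr^{\rm SG}_{0,4}=-\frac{\rho}{2h}\;-\;\tfrac14\operatorname{Res}\frac{x_2^{2N-4}\,\omega}{x_1x_2x_3\,\bigl(x_1^2-(N-1)x_2^{N-2}\bigr)^2}.
\]

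The remaining Grothendieck residue is elementary: take the residue in $x_3$ and then use the transformation law for Grothendieck residues to replace the regular sequence $\bigl(x_1,(x_1^2-(N-1)x_2^{N-2})^2\bigr)$ by the monomial sequence $(x_1,x_2^{2N-4})$ (the Jacobian of the change is $1/(N-1)^2$), obtaining $\tfrac14\cdot\tfrac1{(N-1)^2}=1/h^2$ since $h=2N-2$. Hence the lemma is equivalent to the vanishing of $\rho$, i.e.\ to the assertion that the flat miniversal deformation of the $D_N$ singularity has no $t^2t^{N-1}$ term.

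Proving $\rho=0$ is, I expect, the main obstacle and the one genuinely "type $D$" point of the argument: this coefficient is invisible to the local algebra alone, and it is not forced by constancy of the residue metric $\eta_{ij}=\operatorname{Res}(\phi_i\phi_j)$ in any single direction. I would establish it either by recalling the explicit flat coordinates of the $D_N$ Frobenius manifold—classical, going back to K.~Saito, and available in the literature on Frobenius structures of simple singularities—in which this coefficient is manifestly absent, or self-containedly by pinning down the finitely many quadratic corrections to the flat coordinates (strongly restricted by the grading and by the symmetry $x_1\mapsto-x_1$ of $f$) from the WDVV equations together with constancy of $\eta$. With $\rho=0$ the displayed identity yields $\lbr x_2,x_2,x_2^{N-3},x_2^{N-2}\rbr^{\rm SG}_{0,4}=-1/h^2$.
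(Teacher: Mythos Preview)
Your approach is the same as the paper's: both compute the 4-point correlator as the $t^2$-derivative of the 3-point structure constant $c_{2,N-2,N-1}$, written via the Grothendieck residue for the one-parameter family $f_t=f+t\,x_2$. The paper simply asserts
\[
\langle x_2,x_2^{N-3},x_2^{N-2}\rangle^{\rm SG}_{0,3}(t)=\operatorname{Res}\frac{x_2\cdot x_2^{N-3}\cdot x_2^{N-2}}{\partial_{x_1}f_t\,\partial_{x_2}f_t\,\partial_{x_3}f_t}\,dx,
\]
reduces $x_2^{2N-4}=\tfrac{t}{N-1}x_2^{N-2}$ in the local algebra of $f_t$, and differentiates to get $-1/h^2$. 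This is exactly your ``denominator term''.

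You go further and track what the paper omits: the flat vector fields $\psi_i=\partial_{t^i}G$ need not equal $\phi_i$ beyond zeroth order, and of the three numerator corrections only $\rho=\partial_{t^2}\partial_{t^{N-1}}G|_0$ survives the weight count, contributing $-\rho/(2h)$. The paper's displayed identity is therefore literally the statement that this correction vanishes, but the paper does not justify it—so the ``gap'' you flag is actually a gap in the paper's own argument, not something extra you need that the paper avoids.

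Your proposed remedies (invoke Saito's explicit flat coordinates for $D_N$, or pin down the few quadratic corrections via constancy of $\eta$ and WDVV) are reasonable routes to $\rho=0$; note, however, that constancy of $\eta$ alone will not do it, since the relevant checks (e.g.\ $\partial_{t^2}\eta_{1,N-1}$, $\partial_{t^2}\eta_{2,N-2}$) involve $\rho$ only through $\operatorname{Res}(\rho\cdot 1)=0$ or $\operatorname{Res}(\rho\cdot x_2)=0$ and give no information. You will genuinely need either the explicit flat structure or a WDVV relation that couples $\rho$ to a nonvanishing residue.
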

\begin{proof}
Let $f_t(x)=f(x)+t x_2$ be a deformation of $f$. The 4-point
correlator can be computed by
\be\label{4pt-cor}
\p_t\left.\Big(
  \lbr x_2,x_2^{N-3},x_2^{N-2}
  \rbr^{\rm SG}_{0,3}(t)\Big)\right|_{t=0},
\ee
where the correlator involved is a deformation of 
$\lbr x_2,x_2^{N-3},x_2^{N-2} \rbr^{\rm SG}_{0,3}$ constructed via flat (or
Frobenius) structure of Saito and Givental's higher genus
reconstruction.  On the other hand,
\be\nn
\lbr x_2,x_2^{N-3},x_2^{N-2}
\rbr^{\rm SG}_{0,3}(t) = \operatorname{Res}\,
\frac{    x_2\cdot x_2^{N-3}\cdot x_2^{N-2} }{
  \p_{x_1}(f_t) \p_{x_2}(f_t) \p_{x_3}(f_t) }\, dx  =
\frac{t}{N-1}\, \operatorname{Res}\,
\frac{    x_2^{N-2} \, dx }{
  \p_{x_1}(f_t) \p_{x_2}(f_t) \p_{x_3}(f_t) } ,
\ee
where $dx:=dx_1 \wedge dx_2\wedge dx_3$ and we used that
$x_2^{2(N-2)}= \tfrac{1}{N-1} x_2^{N-2} t$ in the local algebra
of $f_t$.  Since the Grothendieck residue of $x_2^{N-2}$ is
$-\tfrac{1}{2h}$, the above formula  and (\ref{4pt-cor}) yield the
formula that we have to proof. 
\end{proof}

In terms of the generating functions, we get
\be\nn
\mathcal{D}^{\rm FJRW}(\hbar,\mathbf{t}^{\rm FJRW} ) =
\mathcal{D}^{\rm SG} (\hbar c^2, \mathbf{t}^{\rm SG}),
\ee
where the formal variables are related by the following linear change:
\be\label{change:FJRW-SG}
t^{\rm FJRW}_{k,0} =-h\mathbf{i}\,  2^{\tfrac{N-2}{2N-2}} t^{\rm
  SG}_{k,N},\quad 
t^{\rm FJRW}_{k,2i-1} = 2^{\tfrac{i-1}{N-1}} t^{\rm SG}_{k,i},\quad
(1\leq i\leq N-1) .
\ee

\subsection{The 2-BKP hierarchy and the total descendant potential}\label{sec:BKP-D}

Let us explain the identification of $\mathcal{D}^{\rm
  SG}(\hbar,\mathbf{t}^{\rm SG})$ with a tau-function of the 2-BKP
hierarchy. This is done in three steps: construction of Hirota Bilinear Equations (HBEs) for the
total descendant potential 
$\mathcal{D}^{\rm SG}(\hbar,\mathbf{t}^{\rm SG})$, identifying the
HBEs with the HBEs of the principal Kac--Wakimoto hierarchy, and
finally identifying the Kac--Wakimoto hierarchy with the
$(h,2)$-reduction of the 2-BKP hierarchy. These
steps are considered respectively in \cite{GM},\cite{FGM}, and \cite{CM}.  

Let us recall the construction of HBEs from \cite{GM}.
Recall that the set of vanishing cycles in the Milnor lattice
$H_2(f^{-1}(1),\ZZ)$ of the $D_N$-singularity $f$ is a root system of
type $D_N$. Therefore, there exists an orthonormal basis $v_i$ ($1\leq
i\leq N$) of $H_2(f^{-1}(1),\QQ)$ with respect to the intersection
pairing, such that the set of vanishing cycles is given by 
$\pm(v_i \pm v_j)$. Furthermore, the Milnor lattice can be embedded in the dual
vector space of the local algebra $H_f$ via the following period map:
\be\nn
\Pi:H_2(f^{-1}(1),\ZZ)\to H_f^*,\quad
\lbr\Pi(\alpha),\phi_i\rbr:= \frac{1}{2\pi}
\int_\alpha \phi_i(x) \frac{dx}{df},
\ee
where $\tfrac{dx}{df}$ is a holomorphic 2-form $\eta$ defined in a tubular neighborhood
of $f^{-1}(1)$, such that $dx=df\wedge \eta$. Although the form
$\eta$ is not unique, its restriction to $f^{-1}(1)$ is unique and it
determines a holomorphic 2-form on the Milnor fiber $f^{-1}(1)$.

For each $\alpha\in H_2(f^{-1}(1),\ZZ)$ we have a multi-valued
analytic map $I_\alpha^{(-1)}: \CC\setminus{0}\to H_f$, such that 
\be\nn
(I^{(-1)}_\alpha(\lambda),\phi_i):=\frac{1}{2\pi} \int_{\alpha\subset
  f^{-1}(\lambda)} \phi_i(x) \frac{dx}{df},\quad \forall i=1,2,\dots, N,
\ee
where $(\ ,\ )$ is the Grothendieck residue pairing. Due to
homogeneity, we have the following simple formula
\be\nn
I^{(-1)}_\alpha(\lambda) = \sum_{i=1}^N \lambda^{m_i/h} \,
\lbr\Pi(\alpha),\phi_i\rbr\, \phi^i,
\ee
where $\{\phi^i\}$ is the basis of $H_f$ dual to $\{\phi_i\}$ with
respect to the residue pairing and
\be\nn
m_i:=\begin{cases}
  2i-1 & \mbox{ if } 1\leq i\leq N-1 \\
  N-1 & \mbox{ if } i=N.
  \end{cases}
\ee
coincide with the Coxeter exponents of the $D_N$ root system.

Let us choose an eigenbasis $H_i$ ($1\leq i \leq N$) for the Coxeter
transformation, satisfying $(H_i|H_j)=h\delta_{i,j^*}$, where $(\ |\
)$ is the invariant bilinear form of the $D_N$-root system and the
involution ${}^*$ is defined by
\be\nn
i^*=\begin{cases}
  N-i & \mbox{ for } 1\leq i\leq N-1, \\
  N & \mbox{ for } i=N.
  \end{cases}
\ee
More precisely, we choose $H_i$ to be the solutions to the following
system of equations:
\be\nn
v_i=\frac{\sqrt{2}}{h} \Big(
\eta^{m_1 i}H_1 +\cdots + \eta^{m_{N-1}i} H_{N-1}\Big),\quad 1\leq
i\leq N-1, \\
v_N=\frac{1}{\sqrt{h}} H_N,
\ee
where $\eta=e^{2\pi\ii/h}$. 
The Coxeter transformation $\sigma$ corresponds to analytic continuation
around $\lambda=0$, that is, the analytic continuations of
$I^{(-1)}_\alpha(\lambda)$ around $\lambda=0$ is
$I^{(-1)}_{\sigma(\alpha)}(\lambda)$. Therefore,
\be\nn
I^{(-1)}_{H_i}(\lambda) = \frac{\ii}{\sqrt{2}}
\frac{\lambda^{m_i/h}}{m_i/h} \rho_i \phi^i,\quad 1\leq i\leq N-1,\\
I^{(-1)}_{H_N}(\lambda) = \ii\, \sqrt{h} 
\frac{\lambda^{m_N/h}}{m_N/h} \rho_N \phi^N,
\ee
where $\rho_i$ ($1\leq i\leq N$) are some non-zero constants.

\begin{lemma}\label{le:ci}
There exists a choice of the orthonormal basis 
$v_i$ ($1\leq i\leq N$), such that, the constants 
$\rho_i=-\ii \xi^{m_i}$ ($1\leq i\leq N$),
where $\xi=e^{\pi\ii/h}$. 
\end{lemma}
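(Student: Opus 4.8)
The statement is about rescaling an orthonormal basis $v_i$ of the Milnor lattice so that the proportionality constants $\rho_i$ appearing in $I^{(-1)}_{H_i}(\lambda)$ take the prescribed values $-\mathbf{i}\,\xi^{m_i}$. So first I'd note that the freedom we have is: (a) the choice of orthonormal basis $v_1,\dots,v_N$ among all bases realizing the root system as $\pm(v_i\pm v_j)$ — equivalently, acting by the Weyl group together with sign changes and the $D_N$ outer automorphism; and (b) possibly a rescaling convention for $\eta = dx/df$, though $\eta|_{f^{-1}(1)}$ is fixed, so this is essentially rigid. The $H_i$ are then determined by the linear system relating them to $v_i$, and $\rho_i$ is whatever constant makes the homogeneity formula $I^{(-1)}_{H_i}(\lambda)=\tfrac{\mathbf{i}}{\sqrt 2}\tfrac{\lambda^{m_i/h}}{m_i/h}\rho_i\phi^i$ hold.

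The concrete strategy: compute $\rho_i$ explicitly for one natural choice of $v_i$, and then show the residual freedom suffices to normalize. Combining the definition $\lbr\Pi(\alpha),\phi_i\rbr=\tfrac{1}{2\pi}\int_\alpha\phi_i\,\tfrac{dx}{df}$ with the eigenbasis relations $v_i=\tfrac{\sqrt2}{h}\sum_j\eta^{m_j i}H_j$ and the stated formula $I^{(-1)}_\alpha(\lambda)=\sum_i\lambda^{m_i/h}\lbr\Pi(\alpha),\phi_i\rbr\phi^i$, one gets $\rho_i$ in terms of the periods $\lbr\Pi(v_k),\phi_i\rbr$ via the inverse of the $(\eta^{m_j i})$-matrix. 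I would evaluate $\lbr\Pi(v_k),\phi_i\rbr$ directly from the explicit $D_N$ singularity $f = x_1^2x_2 - x_2^{N-1}+x_3^2$: the vanishing cycles are standard (they come from the $A_{N-1}$-type subsystem in the $x_2$-direction together with the $x_3^2$ stabilization), and the period integrals reduce, after doing the Gaussian $x_3$-integral and one more elementary integration, to Beta-function / Gamma-function values. This pins down $\rho_i$ up to a phase of the form $(\text{root of unity})$. Then the sign/permutation/outer-automorphism freedom in choosing $v_i$ acts on the $\rho_i$ by corresponding roots of unity, and I'd check that this action is transitive enough to reach exactly $\rho_i=-\mathbf{i}\,\xi^{m_i}$.

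The main obstacle I anticipate is the bookkeeping of phases and branch choices: the maps $I^{(-1)}_\alpha$ are multivalued, the factors $\lambda^{m_i/h}$ require a fixed branch, and one must track how analytic continuation around $\lambda=0$ (which realizes the Coxeter element $\sigma$) interacts with the chosen eigenbasis — getting a consistent sign convention so that the answer comes out as $-\mathbf{i}\xi^{m_i}$ rather than some other root of unity is where the real care is needed. A secondary subtlety is the case $i=N$ (the "long" exponent $m_N=N-1$ with its different normalization $v_N=\tfrac{1}{\sqrt h}H_N$ and the factor $\sqrt h$ in $I^{(-1)}_{H_N}$): one should verify separately that the single period $\lbr\Pi(v_N),\phi_N\rbr$ produces $\rho_N=-\mathbf{i}\xi^{m_N}=-\mathbf{i}\xi^{N-1}$ with the same branch conventions. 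Everything else — the Gamma-function evaluations, the inversion of the Vandermonde-type matrix in $\eta$, and the homogeneity scaling — is routine.
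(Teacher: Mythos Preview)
Your plan is essentially the same as the paper's: compute the periods $I^{(-1)}_{v_k}(\lambda)$ explicitly for a concrete orthonormal basis and read off the $\rho_i$. The paper, however, does not carry out the Gaussian--Beta reduction from scratch; it quotes Proposition~5 of \cite{MZ}, which already gives the period map image of the Milnor lattice in closed form (after the coordinate change $x_1=\xi^{-1}X_1$, $x_2=\xi^2 X_2$, $x_3=X_3$ sending $f$ to $X_1^2X_2+X_2^{N-1}+X_3^2$). Plugging that formula into the definition of $\rho_i$ and using $\phi^i=-2h\,\phi_{N-i}$, $\phi^N=-\phi_N$ gives $\rho_i=\mathbf{i}\,\xi^{-m_{N-i}}=-\mathbf{i}\,\xi^{m_i}$ for $1\le i\le N-1$ and $\rho_N=1=-\mathbf{i}\,\xi^{m_N}$ directly, after a single sign flip $v_N\mapsto -v_N$.

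The one point where your plan is a bit optimistic is the ``residual freedom suffices to normalize'' step. The orthonormal bases realizing the $D_N$ root system differ only by signed permutations (with an even number of sign changes, plus the outer automorphism), so the action on the $\rho_i$ is by signs, not by arbitrary roots of unity; you cannot rotate the phases freely. In practice this means your direct computation must already land on $-\mathbf{i}\,\xi^{m_i}$ up to signs, and the freedom only cleans up stray $\pm 1$'s---which is exactly what happens in the paper (the sole adjustment is $v_N\mapsto -v_N$). So your strategy works, but the normalization step is less of a safety net than you suggest; the heavy lifting is in getting the period integrals right, and there \cite{MZ} saves you the Gamma-function bookkeeping.
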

\begin{proof}
The period map image of the Milnor lattice of the 
$D_N$-singularity is 
computed explicitly in \cite{MZ}, Proposition 5. In order to quote the
result, we have to switch to
different linear coordinates on $\CC^3$, that is, put 
$x_1= \xi^{-1}\, X_1$, 
$x_2=\xi^2 \, X_2$, 
$x_3=X_3$. 
Then $f= X_1^2 X_2+X_2^{N-1}+X_3^2$. According to \cite{MZ}, Proposition 5, there
exists an orthonormal basis $v_k$ ($1\leq k\leq N$) of
$H_2(f^{-1}(1);\QQ)$, such that, 
\be\nn
I^{(-1)}_{v_k}(\lambda) =
\frac{\lambda^{\theta+1/2}}{\Gamma(\theta+3/2)} 
\, \xi\, \Psi(v_k).
\ee  
Here the extra factor $\xi$ (compared to formula (1) in \cite{MZ}) comes from
the relation $dx_1dx_2dx_3=\xi dX_1dX_2dX_3$, the linear operator $\theta:H_f\to H_f$
defined by $\theta(\Phi_i) := \Big(\tfrac{m_{N-i}}{h}-\tfrac{1}{2}\Big)\Phi_i$ is
the so-called {\em Hodge grading  operator},
\be\nn
\Psi(v_k) = 2\sum_{i=1}^{N-1} \eta^{m_i k} \Gamma(m_i/h)
\Phi_{N-i},\quad (1\leq k\leq N-1),
\ee
and $\Psi(v_N)=-\ii \Gamma(m_N/h)\Phi_N$, where $\Phi_i:=X_2^{i-1}$
($1\leq i\leq N-1$) and $\Phi_N:=2X_1$. Let us point out that, compared
to \cite{MZ}, here we changed the sign of $v_N$ and  that the notation
$v_k$ in \cite{MZ} corresponds to $\Psi(v_k)$ here. We get the
following formulas:
\be\nn
I^{(-1)}_{v_k}(\lambda) =
2\sum_{i=1}^{N-1} \eta^{m_i k} \, 
\frac{\lambda^{m_i/h}}{m_i/h}\,
\xi\, \Phi_{N-i}  =
2\sum_{i=1}^{N-1} \eta^{m_i k} \, 
\frac{\lambda^{m_i/h}}{m_i/h}\,
\xi^{-m_{N-i}}\, \phi_{N-i}
\quad 
(1\leq k\leq N-1)
\ee
and 
$I^{(-1)}_{v_N}(\lambda) = 
-2\ii \lambda^{1/2}\, \xi\, \Phi_N =
-2\ii \lambda^{1/2} \phi_N$.  
On the other hand, we have 
\be\nn
I^{(-1)}_{v_k}(\lambda) = 
\frac{\sqrt{2}}{h} 
\sum_{i=1}^{N-1} 
\eta^{m_i k} \frac{\ii}{\sqrt{2}} 
\frac{\lambda^{m_i/h}}{m_i/h}  \rho_i \phi^i = 
2\sum_{i=1}^{N-1} 
\eta^{m_i k} 
\frac{\lambda^{m_i/h}}{m_i/h}  \rho_i (-\ii )\phi_{N-i}, 
\ee
for $1\leq k\leq N-1$ and $I^{(-1)}_{v_N}(\lambda) = 2\ii
\lambda^{1/2} \rho_N \phi^N = -2\ii \lambda^{1/2} \rho_N \phi_N$, where  we
used that $\phi^i=-2h \phi_{N-i} $ ($1\leq i\leq N-1$) and 
$\phi^N=-\phi_N$. Comparing the two formulas for
$I^{(-1)}$, we get $\rho_i=\ii \xi^{-m_{N-i}}=\ii\xi^{-h+m_i} = -\ii
\xi^{m_i}$ for $1\leq i\leq N-1$ and $\rho_N=1$.
\end{proof}

Let us define the higher order periods $I_\alpha^{(n)}(\lambda) :=
\p_\lambda^{n+1}I_\alpha^{(-1)}(\lambda)$. 
The vertex operators in singularity theory (see \cite{GM}) are defined by
\be\nn
\Gamma^\alpha(\lambda) =\exp\Big(
\sum_{k=0}^\infty I^{(-k-1)}_\alpha(\lambda) (-z)^{-k-1}\Big)\sphat
\exp\Big(
\sum_{k=0}^\infty I^{(k)}_\alpha(\lambda) (-z)^{k}\Big)\sphat,
\ee
where the quantization rules are
\be\nn
(\phi^i(-z)^{-k-1})\sphat := q^{\rm SG}_{k,i}/\sqrt{\hbar},\quad
(\phi_i (-z)^k)\sphat := (-1)^{k+1} \sqrt{\hbar} \partial_{q^{\rm SG}_{k,i}}.
\ee
The formal variables $q^{\rm SG}_{k,i}$ are related to $t^{\rm SG}_{k,i}$ by
the {\em dilaton shift}: $t^{\rm SG}_{k,i} = q^{\rm SG}_{k,i}+
\delta_{k,1}\delta_{i,1}$. Note that $i=1$ is the index of the basis
vector $\phi_1=1$, that is, the identity in the local algebra $H_f$.
Using the explicit formulas for the periods, we get
\be\nn
I^{(-k-1)}_{v_i}(\lambda) =
\frac{\mathbf{i}}{h}\, \sum_{s=1}^{N-1}
\frac{ (\eta^i \lambda^{1/h})^{m_s+k h}}{
  \tfrac{m_s}{h} \left(\tfrac{m_s}{h} +1\right)\cdots \left(
    \tfrac{m_s}{h} +k\right)}\, \rho_s \phi^s,
\ee
\be\nn
I^{(k)}_{v_i}(\lambda) = 2\mathbf{i}\, \sum_{s=1}^{N-1}
(-1)^{k+1}  \tfrac{m_s}{h} \left(\tfrac{m_s}{h} +1\right)\cdots \left(
    \tfrac{m_s}{h} +k-1\right)\, (\eta^i \lambda^{1/h})^{-m_s-k h}\, \rho_{s^*}\phi_s,
\ee
where $1\leq i\leq N-1$. Similarly,
\be\nn
I^{(-k-1)}_{v_N}(\lambda) =
\mathbf{i}\, 
\frac{ \lambda^{\tfrac{1}{2}+k} }{
  \tfrac{1}{2} \left(\tfrac{1}{2} +1\right)\cdots \left(
    \tfrac{1}{2} +k\right)}\, \rho_N \phi^N
\ee
and
\be\nn
I^{(k)}_{v_N}(\lambda) = \mathbf{i}\, \sum_{s=1}^{N-1}
(-1)^{k+1}  \tfrac{1}{2} \left(\tfrac{1}{2} +1\right)\cdots \left(
    \tfrac{1}{2} +k-1\right)\, \lambda^{-\tfrac{1}{2}-k}\, \rho_N\phi_N.
\ee
According to Givental--Milanov \cite{GM} the total descendant
potential $\tau:=\mathcal{D}^{\rm SG}(\hbar;\qq^{\rm SG}) $ is a solution to the
following HBEs:
\begin{align}\nn
  \operatorname{Res}_{\lambda=\infty} \frac{d\lambda}{\lambda} \left(
  \sum_{\alpha\in R} a_\alpha
  \Gamma^{\alpha}(\lambda)\otimes \Gamma^{-\alpha}(\lambda) \right) \
  \tau\otimes \tau = \frac{N(h+1)}{12h}\, \tau\otimes \tau  + & \\
  \nn
  +\frac{1}{h}\sum_{k=0}^\infty\sum_{i=1}^N (m_i+k h)
  \Big(q_{k,i}^{\rm SG} \otimes 1 - 1\otimes q_{k,i}^{\rm SG} \Big)
  \Big(\frac{\partial}{\partial q_{k,i}^{\rm SG} }\otimes 1 -
  1\otimes \frac{\partial}{\partial q_{k,i}^{\rm SG} }\Big)
  \tau\otimes \tau,
\end{align}
where $R$ is the set of all vanishing cycles and the coefficients
$a_\alpha$ are given by some explicit formulas, which would not be
needed. Moreover, according to Givental \cite{Gi}, the total
descendant potential satisfies the Virasoro constraints, in particular the  {\em string equation} 
$L_{-1} \mathcal{D}^{\rm SG}(\hbar;\qq^{\rm SG})=0$, where
\be\nn
L_{-1}:=-\frac{1}{4h\hbar}
\sum_{i=1}^{N-1} q^{\rm SG}_{0,i} q^{\rm SG}_{0,i^*} -
\frac{1}{2\hbar} q^{\rm SG}_{0,N} q^{\rm SG}_{0,N} +
\sum_{k,i} q^{\rm SG}_{k+1,i} \p_{q^{\rm SG}_{k,i}}.
\ee
Following Frenkel--Givental--Milanov \cite{FGM}, we can identify the above HBEs
with the HBEs of the principal Kac--Wakimoto hierarchy (of type
D). Let us recall the definitions and work out the precise
identification. Put $[1,N]$ for the set of integers
$\{1,2,\dots,N\}$ and let $E_+ = [1,2]\times \ZZ^+_{\rm odd}$. Let
$i:E_+\to [1,N]$ be the function defined by $i(2,l)=N$ for all $l\in
\ZZ^+_{\rm odd}$ and $i(1,l)$ is defined to be the unique integer
$i\in [1,N-1]$, such 
that $l\equiv 2i-1 \ ({\rm mod}\ h)$. Let
$y_e(e\in E_+)$ be 
a set of formal variables. Let $m:E_+\to \ZZ_{>0}$ be the function
defined by $m(1,l) = l$ and $m(2,l) = l (N-1)$. The numbers $m(e)$
($e\in E_+$) coincide with the so-called {\em exponents} of the affine
Kac--Moody Lie algebra (of type $D$). The HBEs of the principal
Kac--Wakimoto hierarchy of type D have the form
\begin{align}\nn
  \operatorname{Res}_{\zeta=\infty} \frac{d\zeta}{\zeta} \left(
  \sum_{\alpha\in R} a_\alpha
  \Gamma^{\alpha}(\zeta)\otimes \Gamma^{-\alpha}(\zeta) \right) \
  \tau\otimes \tau = \frac{N(h+1)}{12h}\, \tau\otimes \tau+ & \\
  \nn
  +\frac{1}{h} \sum_{e\in E_+} 
 m(e) \Big(y_e\otimes 1 - 1\otimes y_e \Big)
  \Big(\frac{\partial}{\partial y_e }\otimes 1 -
  1\otimes \frac{\partial}{\partial y_e}\Big)
  \tau\otimes \tau,
\end{align}
where the coefficients $a_\alpha$ are the same as the coefficients
$a_\alpha$ in the HBEs in singularity theory (this is one of the main results
in \cite{FGM}) and the vertex operators 
\be\nn
\Gamma^\alpha(\zeta) := \exp\Big( \sum_{e\in E_+}
(\alpha | H_{i(e)^*}) y_e \zeta^{m(e)}\Big)
\exp\Big( \sum_{e \in E_+} (\alpha | H_{i(e)})
\partial_{y_e} \frac{\zeta^{-m(e)}}{m(e)}\Big).
\ee
Recall that we have $(v_i|H_s) = \sqrt{2}\eta^{-i m_s}$,
$(v_i|H_N)=(v_N|H_i)=0$ for $1\leq i, s\leq N-1$ and
$(v_N|H_N)=\sqrt{h}.$ Therefore, the vertex operators of the
Kac--Wakimoto hierarchy can be computed explicitly. Comparing with the
vertex operators in singularity theory, we get that they coincide
under the substitution $\lambda=\zeta^h$ and an appropriate rescaling
of the formal variables. More precisely, if
$m=2l+1$ ($l\geq 0$) is an odd
integer, then let us construct unique integers $k$ and $s$, such that 
\be\nn
l+1= (N-1)k +s,\quad k\geq 0,\quad 1\leq s\leq N-1.
\ee                                                                 
Then the substitution that identifies the vertex operators takes the
following form:
\be\label{y_1m}
y_{1,2l+1} = \frac{\mathbf{i} \rho_s}{h\sqrt{2\,\hbar} }\, 
\frac{ q^{\rm SG}_{k,s} }{
\tfrac{m_s}{h} 
\left(\tfrac{m_s}{h} +1\right)\cdots 
\left(\tfrac{m_s}{h} +k\right)}
\ee
and
\be\label{y_2m}
y_{2,2l+1} = \frac{\mathbf{i} \rho_N}{\sqrt{h\, \hbar} }\, 
\frac{ q^{\rm SG}_{l,N} }{
\tfrac{1}{2} 
\left(\tfrac{1}{2} +1\right)\cdots 
\left(\tfrac{1}{2} +l\right)}.
\ee
As we already mentioned in the introduction, the Kac--Wakimoto
hierarchy can be identified with the $(h,2)$-reduction of the
2-component BKP hierarchy. The identification consists of expressing
the generators of the principal Heisenberg algebra, which in the
Kac--Wakimoto representation are given by
\be\nn
H_{i(e),m(e)}:=\frac{\partial}{\partial y_e},\quad
H_{i(e)^*,-m(e)}:= m(e) y_e,\quad e\in E_+,
\ee
in terms of the operators $J^a_m$ defined by (\ref{Jam}). We have (see \cite{CM}, Section
1.4 for more details)
\be\nn
H_{i,m} = \frac{1}{\sqrt{2}} \, J^1_m,\quad i\in [1,N-1],\quad m\equiv
2i-1\ ({\rm mod}\ h)
\ee
and
\be\nn
H_{N,m(N-1)} = \sqrt{\frac{N-1}{2}}\, J^2_m.
\ee
Therefore, the following formulas
\be\label{KW-BKP}
t_{1,m}:= \sqrt{2}\, y_{1,m},\quad
t_{2,m}:=\sqrt{h}\, y_{2,m},\quad m\in \ZZ^+_{\rm odd}
\ee
provide an identification between the Kac--Wakimoto hierarchy and the
$(h,2)$-reduction of the 2-component BKP. Recalling the substitution
\eqref{y_1m}--\eqref{y_2m},  we get that the total
descendant potential $\mathcal{D}^{\rm SG}(\hbar,\qq^{\rm SG})$ can be
identified with a solution of the $(h,2)$-reduction of the 2-component
BKP via the following substitutions:
\be\label{t_1m}
q_{1,2l+1} = \frac{\mathbf{i} \rho_s}{h \sqrt{\hbar} }\, 
\frac{ q^{\rm SG}_{k,s} }{
\tfrac{m_s}{h} 
\left(\tfrac{m_s}{h} +1\right)\cdots 
\left(\tfrac{m_s}{h} +k\right)}
\ee
and
\be\label{t_2m}
q_{2,2l+1} = \frac{\mathbf{i} \rho_N}{ \sqrt{\hbar} }\, 
\frac{ q^{\rm SG}_{l,N} }{
\tfrac{1}{2} 
\left(\tfrac{1}{2} +1\right)\cdots 
\left(\tfrac{1}{2} +l\right)}.
\ee
We replaced here the dynamical variables of 2-BKP with $q_{a,2l+1}$,
because we would like to work out also how the dilaton shift is
transformed under the substitution. In order to do this, 
let us identify the standard dynamical variables $t_{1,2l+1}$ and
$t_{2,2l+1}$  of 2-BKP with $t^{\rm SG}_{k,i}$ by the same
substitutions (\ref{t_1m}) and 
(\ref{t_2m}), that is replace $q$ by $t$ in both formulas. In the
new variables the dilaton shift, i.e., the relation between $q_{a,m}$
and $t_{a,m}$, takes the form
\be\nn
t_{a,m}=q_{a,m} + \delta_{a,1} \delta_{m,2N-1}\, \frac{\mathbf{i}
  \rho_1}{\sqrt{\hbar}}\, \frac{h}{h+1}.
\ee
The string operator in the variables $t_{a,m}$ takes the form
\be\nn
L_{-1}= -\frac{\mathbf{i} \rho_1}{\sqrt{\hbar}} \, \p_{t_{1,1}}+ 
\sum_{i=1}^{N-1} \frac{(2i-1)(2i^*-1)}{4h} t_{1,2i-1} t_{1,2i^*-1} +
\frac{1}{8} t_{2,1} t_{2,1} +\\
+\sum_{m\in \z^{\rm odd}_+ } \Big(
\tfrac{m+h}{h} t_{1,m+h}\p_{t_{1,m}}+
\tfrac{m+2}{2} t_{2,m+2}\p_{t_{2,m}}
\Big).
\ee
In order to work in the settings of \cite{CM}, we have to set
$\sqrt{\hbar}=\rho_1$, that is, the unique tau-function
$\tau^{\rm CM}(\mathbf{t}_1,\mathbf{t}_2)$  of the
2-component BKP hierarchy satisfying $L_{-1}\tau^{\rm CM}=0$ coincides
with $\mathcal{D}^{\rm SG}(\rho_1^2, \mathbf{t}^{\rm SG})$. 
Recalling the mirror symmetry result of 
Fan--Jarvis--Ruan, we get that 
\be\nn
\tau^{\rm CM}(\mathbf{t}_1,\mathbf{t}_2) = 
\mathcal{D}^{\rm FJRW}(\rho_1^2/c^2,\mathbf{t}^{\rm FJRW}).
\ee
Note that  $\rho_1^2/c^2= 2^{2-\tfrac{1}{N-1}}\, h \, \eta$. Combining the
linear changes \eqref{change:FJRW-SG} and  \eqref{t_1m}--\eqref{t_2m},
and Lemma \ref{le:ci} we get that the 2-BKP time variables are related
to the FJRW-variables via the following linear substitutions:
\begin{align}
\nn
t_{1, hk+2i-1} = & \ \phantom{-}\frac{\ii}{h}\, (2^{-1/h} \, \xi)^{m_i-1}\, 
\frac{t^{\rm FJRW}_{k,2i-1} }{\tfrac{m_i}{h}
\left(\tfrac{m_i}{h}+1\right) \cdots 
\left(\tfrac{m_i}{h}+k \right)}\quad 
(1\leq i\leq N-1) \\
\nn
t_{2, 2k+1} = & -\frac{1}{h}\, (2^{-1/h}\, \xi )^{m_N-1}\, 
\frac{t^{\rm FJRW}_{k,0}}{\tfrac{m_N}{h}
\left(\tfrac{m_N}{h}+1\right) \cdots 
\left(\tfrac{m_N}{h}+k \right)},
\end{align}
where $\xi=e^{\pi\ii/h}$.

\section{Free fermions and Pfaffians}\label{S_3}
In this section we use the neutral free fermion description of BKP hierarchy, introduced in \cite{DJKM1,DJKM2}, to derive a Pfaffian expression for the tau-function of 2-component BKP hierarchy in the Miwa parametrization. 

Let us recall the set up from Section 1.3 in \cite{CM}. Namely, let
$\phi_a(k)$, $a=1,2$, $k\in \ZZ$ be a set of neutral fermions,
satisfying the commutation relations
\be\nn
\phi_a(k)\phi_b(l)+\phi_b(l)\phi_a(k)=(-1)^k \delta_{a,b}\delta_{k,-l}.
\ee
The fermionic Fock space $\mathcal{F}$ is generated from a vacuum
vector $|0\rangle$ by the action of the above fermions with the only constraints
\be\nn
\phi_a(k)|0\rangle=0\quad (k<0),\quad
(\phi_1(0)+\mathbf{i}\phi_2(0))|0\rangle =0.
\ee
The vector space $\F$ is equipped with a positive definite Hermitian
form $H(\ ,\ )$, uniquely determined by the properties: $H(|0\rangle,|0\rangle)=1$ and $\phi_a(k)^\dagger=(-1)^k\phi_a(-k)$. Here $T^\dagger$ is a Hermitian conjugate of $T$, which satisfies
\be\nn
H(T^\dagger U,V)=H(U,T V),\quad \forall U,V\in \mathcal{F}.
\ee
If $T\in
\operatorname{End}(\mathcal{F})$ is a linear operator, then we define
\be\nn
\langle v_1| T |v_2\rangle:= H(v_1,T(v_2)). 
\ee

\subsection{Boson--Fermion correspondence}
Put
\be\label{Jam:fermionic}
J^a_m=\sum_{k\in \z} (-1)^k :\phi_a(-k-m)\phi_a(k) :,\quad 
\ee
where the fermionic normal ordering is defined by $: ab :=ab-\langle 0|
ab|0\rangle$. These operators satisfy the following commutation
relations:
\be\nn
[J^a_k,J^b_l]=2k\delta_{k,-l}\delta_{a,b}.
\ee
Put
\be\nn
\phi_a(z) := \sum_{k\in \z} \phi_a(k) z^k.
\ee
Then we have
\be\label{nf-vo}
\phi_a(z)=Q_a
e^{\sum_{m\in \z^+_{\rm odd}}
  J_{-m}^a \frac{z^m}{m}}
e^{-\sum_{m\in \z^+_{\rm odd}}
  J_{m}^a \frac{z^{-m}}{m}},
\ee
where $Q_a:\mathcal{F}\to \mathcal{F}$ is the linear operator defined by
\be\label{Q-action}
Q_a\phi_a(k)=\phi_a(k)Q_a,\quad Q_a\phi_{3-a}(k)=-\phi_{3-a}(k)Q_a,\quad
Q_a|0\rangle = \phi_a(0)|0\rangle.
\ee
The operator $Q_1Q_2$ has eigenvalues $\pm \mathbf{i}/2$. Let
$\mathcal{F}_0$ be the eigensubspace corresponding  to eigenvalue
$\mathbf{i}/2$. Let $\tt=(\tt_1,\tt_2)$ be a pair of two sequences of
formal variables of the form $\mathbf{t}_a=(t_{a,1},t_{a,3},\dots)$.   
The Boson--Fermion isomorphism $\mathcal{F}_0\cong
\mathbb{C}[\![\mathbf{t}]\!]$ can be defined as follows
\be\label{eq_BF}
v\in \mathcal{F}_0\mapsto \tau(v,\mathbf{t}):=
\langle 0|
\exp\Big(
\frac{1}{2}\sum_{a=1,2}
\sum_{m\in \z^+_{\rm  odd}} t_{a,m} J^a_m
\Big) |v\rangle. 
\ee
We have
\be
mt_{a,m} \tau(v,\mathbf{t})  = \tau(J^a_{-m} v,\mathbf{t}), \\
2\partial_{t_{a,m}}\tau(v,\mathbf{t}) = \tau(J^a_mv,\mathbf{t}),\\
\Gamma_a(\tt, z)\tau(v,\mathbf{t}) = \tau(Q_a^{-1} \phi_a(z)v, \mathbf{t}),
\label{bf-iso-vo}
\ee
where $J^a_m$ are the fermionic operators (\ref{Jam:fermionic}) and 
\be\label{vo:bosonic}
\Gamma_a(\tt, z) =
\exp\Big(\sum_{m\in \z^+_{\rm odd}} t_{a,m} z^m\Big)
\exp\Big(-\sum_{m\in\z^+_{\rm odd}}
2\frac{\partial}{\partial  t_{a,m}} \frac{z^{-m}}{m} \Big)
\ee
are vertex operators. Note that the 3rd formula in (\ref{bf-iso-vo})
is a consequence of the preceding two ones and (\ref{nf-vo}). 
The fermionic definition of the 2-component BKP (2-BKP) hierarchy and its
$(h_1,h_2)$-reduction is given in terms of the following set of
bilinear operators:
\be\nn
\Omega_m:= \sum_{a=1,2}\sum_{k\in\z} (-1)^k
\phi_a(k)\otimes \phi_a(-k-m h_a),\quad m\in \ZZ.
\ee
Namely, a function $\tau\in \F_0$ is said to be a {\em tau-function} of
the 2-BKP hierarchy if $\Omega_0(\tau\otimes \tau)=0$. 
A function $\tau\in \F_0$ is said to be a tau-function of the
$(h_1,h_2)$-reduction of 2-BKP if $\Omega_m(\tau\otimes \tau)=0$ for
all $m\in \ZZ_{\geq 0}$. We will be interested in the case when $h_1=h=2N-2$
and $h_2=2$, where $N\geq 2$, that is, the $(h,2)$-reduction. 
\begin{remark}
The 2-component BKP hierarchy can also be described in terms of 1-component neutral fermions. The expression for the tau-function in this case is analogous to the expression for 2D Toda lattice hierarchy in terms of 1-component charged fermions, namely for the 1-component neutral fermions one has
\be\nn
\tau(\mathbf{t})=\langle 0| \exp\Big(
\frac{1}{2}
\sum_{m\in \z^+_{\rm  odd}} t_{1,m} J_m
\Big) G \exp\Big(
\frac{1}{2}
\sum_{m\in \z^+_{\rm  odd}} t_{2,m} J_{-m}
\Big)
 |0\rangle,
\ee
where $G$ is the corresponding group element.
 This representation allows to represent this tau-function as a square of 2D Toda tau-function \cite{LO}. It would be interesting to find a 2D Toda tau-function, corresponding to the main object of this paper, that is, the $D_N$ singularity solution of 2-BKP.
\end{remark}

\subsection{Miwa parametrization}
Suppose that $v\in \mathcal{F}_0$ is arbitrary and let $\tau(v,\mathbf{t})$
be the function corresponding to $v$ via the Boson--Fermion correspondence (\ref{eq_BF}). 
Then the tau-function in the Miwa parametrization $\tau (Z_1,Z_2):=\left.\tau(v,\mathbf{t})\right|_{t_{a,m} = -\frac{2}{m} \, \Tr(Z_a^{-m}) }$ takes the form
\be\nn
\tau (Z_1,Z_2) = \langle 0| : \Gamma_1(z_{1,1})\cdots \Gamma_1(z_{1,N_1})
\Gamma_2(z_{2,1})\cdots \Gamma_2(z_{2,N_2}):|v\rangle,
\ee
where the normal ordering puts all $J^a_m$ with positive $m$ to the
right of all $J^a_m$ with negative $m$ and slightly abusing the notation we denote by 
\be\nn
\Gamma_a(z):=Q_a^{-1}\phi_a(z)= 
\exp\Big(
\sum_{m\in \ZZ_{\rm odd}^+ } J^a_{-m} \frac{z^m}{m}
\Big)
\exp\Big(-
\sum_{m\in \ZZ_{\rm odd}^+ } J^a_{m} \frac{z^{-m}}{m}
\Big)
\ee
the image of the vertex operator (\ref{vo:bosonic}) under the
Boson--Fermion isomorphism (cf. (\ref{bf-iso-vo})). Let
\be\nn
\tilde{K}(z,w):=\frac{z-w}{z+w},
\ee
and
\be\nn
K(z,w):=\iota_{|z|>|w|}\tilde{K}(z,w),
\ee
where $\iota_{|z|>|w|}$ is the operation of Laurent series expansion in the
region $|z|>|w|$. Using the OPE formula
\be\nn
\Gamma_a(z)\Gamma_b(w) = K(z,w)^{\delta_{a,b}}: \Gamma_a(z)\Gamma_b(w) :
\ee
for $|z_{1,1}|>|z_{1,2}|>\dots |z_{1,{N_1}}|$ and  $|z_{2,1}|>|z_{2,2}|>\dots |z_{2,{N_2}}|$ we get
\be\nn
\tau (Z_1,Z_2)  =\frac{
\langle 0| \Gamma_1(z_{1,1})\cdots \Gamma_1(z_{1,N_1})
\Gamma_2(z_{2,1})\cdots \Gamma_2(z_{2,N_2})|v\rangle}{
\prod_{a=1,2}\prod_{1\leq i<j\leq N_a} K(z_{a,i},z_{a,j})}.
\ee
Using that $\Gamma_a(z)=Q_a^{-1}\phi_a(z)$ and recalling the
definition (\ref{Q-action}) of the operators $Q_a$ we get     
\begin{equation}
\begin{split}\nn
\tau (Z_1,Z_2) & =
\frac{\langle 0|
Q_1^{-1} \phi_1(z_{1,1})\cdots Q_1^{-1} \phi_1(z_{1,N_1})
Q_2^{-1} \phi_2(z_{2,1})\cdots Q_2^{-1} \phi_2(z_{2,N_2})
|v\rangle}{
\prod_{a=1,2}\prod_{1\leq i<j\leq N_a} K(z_{a,i},z_{a,j})} =\\
&=(-1)^{N_1N_2}\, 
\frac{\langle 0|
\phi_1(z_{1,1})\cdots \phi_1(z_{1,N_1})
\phi_2(z_{2,1})\cdots \phi_2(z_{2,N_2})
|Q_1^{-N_1}Q_2^{-N_2} v\rangle}{
\prod_{a=1,2}\prod_{1\leq i<j\leq N_a} K(z_{a,i},z_{a,j})}.
\end{split}
\end{equation}
Recalling the definition (\ref{Q-action}) of the operators $Q_a$
($a=1,2$) we get that they satisfy the following relations:
\be\nn
Q_1^2=Q_2^2=1/2,\quad Q_1Q_2+Q_2Q_1=0. 
\ee
Therefore
\be\nn
Q_1^{-N_1}Q_2^{-N_2} = Q_1^{-N_1-N_2} Q_1^{N_2} (2Q_2)^{N_2} =
(-1)^{N_2(N_2-1)/2} 2^{(N_1+N_2)/2} (2Q_1Q_2)^{N_2}.
\ee
Note that $(-1)^{N_2(N_2-1)/2} = \mathbf{i}^{N_2^2-N_2}$. 
By definition $\mathcal{F}_0$ is an eigensubspace with eigenvalue
$\mathbf{i}$ for $2Q_1Q_2$. The above identity yields
\be\nn
Q_1^{-N_1}Q_2^{-N_2} v = \mathbf{i}^{N_2^2} 2^{(N_1+N_2)/2} v.
\ee
Therefore, in the Miwa variables the tau-function takes the form
\be\label{Miwa-tau}
\tau(Z_1,Z_2) := B(Z_1,Z_2) \,
\langle 0|
\phi_1(z_{1,1})\cdots \phi_1(z_{1,N_1})
\phi_2(z_{2,1})\cdots \phi_2(z_{2,N_2})
|v\rangle,
\ee
where
\be\nn
B(Z_1,Z_2) = \frac{\mathbf{i}^{-N_1^2}\,  2^{(N_1+N_2)/2}}{
\prod_{a=1,2}\prod_{1\leq i<j\leq N_a}
K(z_{a,i},z_{a,j})},
\ee
or
\be\label{eq_B}
B(Z_1,Z_2)^{-1}= \langle 0|
\phi_1(z_{1,1})\cdots \phi_1(z_{1,N_1})
\phi_2(z_{2,1})\cdots \phi_2(z_{2,N_2})
|0\rangle.
\ee
\subsection{Pfaffian Wick's theorem}
The first step in the proof of Theorem \ref{t2} is to express the
tau-function of the 2-BKP hierarchy in terms of Pfaffians. For this purpose we need 
the following Pfaffian version of Wick's theorem, which is a direct
neutral fermion analog of Wick's theorem of charged free fermions
(see, e.g., \cite{AZ}).  
The idea of the proof is also similar to the charged fermions (KP hierarchy) case.

Suppose that $v\in \mathcal{F}_0$ is a solution to the bilinear
equation $\Omega_0(v\otimes v)=0$. According to
van de Leur and Kac \cite{KL} there is a linear operator $G\in
\operatorname{GL}(\mathcal{F})$ with $v=G|0\rangle$,  such that
$\Omega_0(G\otimes G)=(G\otimes G)\Omega_0$. In particular, we get
\be\nn
\sum_{a=1,2}\sum_{k\in \z}
(-1)^k \langle U|\phi_a(k) G |V\rangle
\langle U'|\phi_a(-k) G |V'\rangle =
\sum_{a=1,2}\sum_{k\in \z}
(-1)^k \langle U|G\phi_a(k) |V\rangle
\langle U'|G\phi_a(-k) |V'\rangle 
\ee
for any $U,U',V,V' \in  \mathcal{F}_0$.
Following \cite{AZ} we call this identity the {\em basic bilinear
  condition}.  Below we assume that $\langle 0|v\rangle\neq 0$.
  Let   
\be\nn
v_i=\phi_{b_i}(z_i)=\sum_{k\in \z} \phi_{b_i}(k) z_i^k,\quad
1\leq i\leq 2n,\quad b_i\in \{1,2\},
\ee
be a set of $2n$ fermionic fields.

\begin{proposition}\label{prop:pfaffian}
  Suppose that $v=G|0\rangle$ is a solution to $\Omega_0(v\otimes v)=0$.
 Then

a) The following identity holds
\be\nn
\sum_{a=1,2} \sum_{k\in \z} (-1)^k
\langle 0|v_{2n} \phi_a(k) |v\rangle \,
\langle 0|v_1\cdots v_{2n-1} \phi_a(-k) |v\rangle = 0.
\ee

b) The following recursion holds
\be\nn
\langle 0|v_1\cdots v_{2n}|v\rangle = \sum_{i=1}^{2n-1} (-1)^{i-1}
\frac{\langle 0|v_i v_{2n}|v\rangle}{\langle 0|v\rangle}\,
\langle 0|v_1\cdots v_{i-1} v_{i+1} \cdots v_{2n-1}|v\rangle .
\ee

c) The following formula holds
$$
\frac{\langle 0|v_1\cdots v_{2n}|v\rangle }{\langle 0|v\rangle} =
\operatorname{Pf}\Big(
(2\theta(j-i)-1)\frac{\langle 0| v_i v_j|v\rangle}{\langle 0|v\rangle} 
\Big)_{1\leq i,j\leq 2n},
$$
where
$$
\theta(m)=
\begin{cases}
  1 & \mbox{ if } m>0,\\
  \frac{1}{2} & \mbox{ if } m=0, \\
  0 & \mbox{ if } m<0,
\end{cases}
$$
is the Heaviside function. 
\end{proposition}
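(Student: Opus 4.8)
The plan is to prove parts (a), (b), (c) in that order, with (b) carrying the technical weight and (c) a purely combinatorial consequence of (b). \emph{For (a):} by hypothesis $\Omega_0(v\otimes v)=0$, i.e.\ $\sum_{a=1,2}\sum_{k\in\ZZ}(-1)^k\,(\phi_a(k)v)\otimes(\phi_a(-k)v)=0$ in $\F\otimes\F$, and I would simply apply to this the bilinear functional $\langle 0|\,v_{2n}\,(\cdot)\rangle\otimes\langle 0|\,v_1\cdots v_{2n-1}\,(\cdot)\rangle$. Because each $\phi_a(k)$ is odd and anticommutes with $Q_1Q_2$, the vectors $v_{2n}\phi_a(k)v$ and $v_1\cdots v_{2n-1}\phi_a(-k)v$ again lie in $\F_0$, so all matrix elements are well-defined formal series in the $z_i$ and the functional may be applied termwise; this yields (a) at once.

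\emph{For (b):} starting from (a), I would commute $\phi_a(-k)$ leftward through $v_1,\dots,v_{2n-1}$ using the anticommutation relations — passing it through $v_i=\phi_{b_i}(z_i)=\sum_l\phi_{b_i}(l)z_i^l$ produces the $c$-number $(-1)^kz_i^k\delta_{a,b_i}$ plus a sign reversal, so the ``contraction at $v_i$'' contribution carries $(-1)^{2n-1-i}(-1)^kz_i^k\delta_{a,b_i}\langle 0|v_1\cdots\widehat{v_i}\cdots v_{2n-1}|v\rangle$; summing over $k$ (the two factors $(-1)^k$ cancel and $\sum_kz_i^k\phi_{b_i}(k)=v_i$) reassembles $\langle 0|v_{2n}v_i|v\rangle$, and after reordering $v_{2n}v_i$ and $v_{2n}v_1\cdots v_{2n-1}$ into increasing index order (the anticommutator $c$-numbers vanish for generic $z_i$) these terms collapse to $\sum_{i=1}^{2n-1}(-1)^{i-1}\langle 0|v_iv_{2n}|v\rangle\langle 0|v_1\cdots\widehat{v_i}\cdots v_{2n-1}|v\rangle$. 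What is then left of (a) is a boundary term $\sum_{a,k}(-1)^k\langle 0|v_{2n}\phi_a(k)|v\rangle\langle 0|\phi_a(-k)v_1\cdots v_{2n-1}|v\rangle$ with $\phi_a(-k)$ adjacent to the vacuum; to dispose of it I would move $\phi_a(k)$ through the single field $v_{2n}$. The piece where $\phi_a(k)$ is contracted with $v_{2n}$ resums, using $\langle 0|\phi_a(m)=0$ for $m>0$, to $\langle 0|v\rangle\,\langle 0|v_{2n}v_1\cdots v_{2n-1}|v\rangle=-\langle 0|v\rangle\,\langle 0|v_1\cdots v_{2n}|v\rangle$, exactly the term we want; the remaining piece $\sum_{a,k}(-1)^k\langle 0|\phi_a(k)v_{2n}|v\rangle\langle 0|\phi_a(-k)v_1\cdots v_{2n-1}|v\rangle$ survives, again by $\langle 0|\phi_a(m)=0$ for $m>0$, only at $k=0$, where the identity $\langle 0|\phi_1(0)=\mathbf{i}\,\langle 0|\phi_2(0)$ (the adjoint of $(\phi_1(0)+\mathbf{i}\phi_2(0))|0\rangle=0$) makes it proportional to $\mathbf{i}^2+1=0$. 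Collecting the pieces and dividing by $\langle 0|v\rangle\neq0$ gives the recursion. I expect this vanishing of the $k=0$ boundary piece to be the main obstacle: it is the one place where the precise defining relation of $\F_0$, and in particular $\mathbf{i}^2=-1$, is indispensable, and it demands careful sign and ordering bookkeeping.

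\emph{For (c):} this is an induction on $n$ using (b). The base cases are trivial: $n=0$ reads $1=1$, and for $n=1$ the Pfaffian of a $2\times2$ matrix equals its $(1,2)$-entry, here $(2\theta(1)-1)\langle 0|v_1v_2|v\rangle/\langle 0|v\rangle=\langle 0|v_1v_2|v\rangle/\langle 0|v\rangle$. For the inductive step I would use the standard Laplace expansion of a Pfaffian along its last row, $\operatorname{Pf}(M)=\sum_{i=1}^{2n-1}(-1)^{i-1}M_{i,2n}\,\operatorname{Pf}(M^{\{i,2n\}})$, which holds for the combinatorial Pfaffian of any matrix (only entries above the diagonal enter, and $2\theta(2n-i)-1=1$ for $i<2n$, so $M_{i,2n}=\langle 0|v_iv_{2n}|v\rangle/\langle 0|v\rangle$). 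Applying this to $M=\big((2\theta(j-i)-1)\langle 0|v_iv_j|v\rangle/\langle 0|v\rangle\big)_{1\le i,j\le 2n}$, and using the inductive hypothesis for the $(2n-2)\times(2n-2)$ minors, the right-hand side becomes precisely the right-hand side of the recursion in (b); hence $\operatorname{Pf}(M)=\langle 0|v_1\cdots v_{2n}|v\rangle/\langle 0|v\rangle$, which is (c).
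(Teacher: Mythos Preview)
Your overall strategy matches the paper's: (a) is the bilinear identity, (b) is obtained from (a) by commuting the fermions to the left, and (c) follows from (b) by the Laplace expansion of the Pfaffian along the last row. Part (c) is identical to the paper's argument. For (a) you take a slightly more direct route than the paper --- applying the covector $\langle 0|v_{2n}(\cdot)\rangle\otimes\langle 0|v_1\cdots v_{2n-1}(\cdot)\rangle$ straight to $\Omega_0(v\otimes v)=0$ --- whereas the paper uses the intertwining relation $\Omega_0(G\otimes G)=(G\otimes G)\Omega_0$ to push the fermions onto the vacuum on the right; your version is legitimate and arguably cleaner.

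There is, however, a genuine soft spot in your argument for (b). By commuting $\phi_a(-k)$ through $v_1\cdots v_{2n-1}$ \emph{first}, your contraction terms assemble into $\langle 0|v_{2n}v_i|v\rangle$ with the fields in the wrong order, and you then invoke ``the anticommutator $c$-numbers vanish for generic $z_i$'' to swap $v_{2n}v_i\to -v_iv_{2n}$ (and likewise $v_{2n}v_1\cdots v_{2n-1}\to -v_1\cdots v_{2n}$). That justification is not correct: for $b_i=b_{2n}$ one has $\{v_i,v_{2n}\}=\sum_k(-1)^k z_i^k z_{2n}^{-k}$, the formal delta series, which is emphatically nonzero. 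Your two reorderings do in fact produce delta-function corrections that cancel against each other, but you have not shown this, and ``generic $z_i$'' does not dispose of a formal distribution. The paper sidesteps the whole issue by reversing the order of operations: it first commutes $\phi_a(k)$ past $v_{2n}$ (one step), obtaining $\langle 0|\phi_a(k)v_{2n}|v\rangle$ in the first factor, and only \emph{then} commutes $\phi_a(-k)$ through $v_1\cdots v_{2n-1}$; the contraction at $v_i$ now sums $\sum_k z_i^k\langle 0|\phi_{b_i}(k)v_{2n}|v\rangle$ directly to $\langle 0|v_iv_{2n}|v\rangle$ in the desired order, and no swapping is ever needed. If you reorganize your computation this way the gap closes immediately; alternatively, you may keep your order but then you must exhibit the cancellation of the delta terms explicitly.
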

\begin{proof}
a)  
Note that $v_i^\dagger:=\phi_{b_i}(-z_i^{-1})$ is the Hermitian
conjugate of $v_i$.
Let us recall the basic bilinear condition for
$$
U=v_{2n}^\dagger
|0\rangle,\quad
U'=v_{2n-1}^\dagger\cdots v_1^\dagger|0\rangle,\quad
V'=V=|0\rangle.
$$
We just need to check that the RHS of the basic bilinear identity is
$0$. If $k>0$ then $\phi_a(-k)V' =0$. If $k<0$ then $\phi_a(k)V=0.$
Therefore, only the terms with $k=0$ do not vanish, i.e.,
$$
\langle U|G\phi_1(0)|0\rangle\,
\langle U'|G\phi_1(0)|0\rangle +
\langle U|G\phi_2(0)|0\rangle\,
\langle U'|G\phi_2(0)|0\rangle .
$$
The above expression vanishes because $\phi_1(0)|0\rangle =
-\mathbf{i}\phi_2(0)|0\rangle$. 

b)
The idea is to use the identity proved in part a) and move the
fermions $\phi_a(k)$ and $\phi_a(-k)$ to the left side of the
corresponding correlator using the commutation relations
\be\nn
v_i \phi_a(k)+\phi_a(k) v_i = (-1)^k\delta_{a,b_i} z_i^{-k}. 
\ee
Let us first do this with the first correlator, i.e., replace
\be\nn
v_{2n} \phi_a(k)=-\phi_a(k) v_{2n} + (-1)^k\delta_{a,b_{2n}} z_{2n}^{-k}.
\ee
We get that the following two sums are equal:
\be\label{sum-1}
\sum_{a=1,2}
\sum_{k\in\z} (-1)^k \langle 0|\phi_a(k) v_{2n}|v\rangle \,
\langle 0|v_1\cdots v_{2n-1} \phi_a(-k)|v\rangle
\ee
and
\be\label{sum-2}
\sum_{a=1,2}\sum_{k\in \z} \delta_{a,b_{2n}}\, z_{2n}^{-k}\, \langle 0|v\rangle\,
\langle 0| v_1\cdots v_{2n-1} \phi_a(-k) |v\rangle=
\langle 0|v\rangle\,
\langle 0| v_1\cdots v_{2n-1} v_{2n} |v\rangle .
\ee
Let us split the sum (\ref{sum-1}) into $2n$ parts according to the
RHS of the identity
\be\nn
v_1\cdots v_{2n-1} \phi_a(-k) = -\phi_a(-k) v_1\cdots v_{2n-1} +
\sum_{i=1}^{2n-1} (-1)^{k+i-1}\,  \delta_{a,b_i} \, z_i^k\, v_1\cdots v_{i-1}
v_{i+1}\cdots v_{2n-1} .
\ee
The first part of the sum is
\be\nn
-\sum_{a=1,2}\sum_{k\in\z} \langle 0|\phi_a(k) v_{2n}
|v\rangle
\langle 0|\phi_a(-k) v_1\cdots v_{2n-1} |v\rangle =0,
\ee
where the terms with $k\neq 0$ vanish, because either $\phi_a(k)$ or
$\phi_a(-k)$ annihilates the vacuum, while the remaining terms have
only $k=0$ and they add up to $0$ thanks to the identity
$\phi_1(0)|0\rangle =-\mathbf{i} \phi_2(0)|0\rangle$.
The remaining parts have the form
\be\nn
\sum_{a=1,2}\sum_{k\in \z} (-1)^{i-1} \delta_{a,b_i} \,
z_i^k\,
\langle 0|\phi_a(k) v_{2n} |v\rangle
\langle 0| v_1\cdots v_{i-1} v_{i+1}\cdots v_{2n-1} |v\rangle,
\ee
so the sum (\ref{sum-1}) turns into
\be\nn
\sum_{i=1}^{2n-1}
(-1)^{i-1}
\langle 0|v_i v_{2n} |v\rangle
\langle 0| v_1\cdots v_{i-1} v_{i+1}\cdots v_{2n-1} |v\rangle.
\ee
Comparison with (\ref{sum-2}) completes the proof of part b).

c)
Let $A$ be the $2n\times 2n$ skew-symmetric matrix whose
upper-triangular entries are defined by
\be\nn
a_{ij} :=\frac{\langle 0| v_i v_j|v\rangle}{\langle 0|v\rangle},\quad
1\leq i<j\leq 2n .
\ee
We argue by induction on $n$. For $n=1$ the matrix has the form
\be\nn
A=
\begin{bmatrix}
  0 & a_{12} \\
  -a_{12} & 0
\end{bmatrix}
\ee
and its Pfaffian is $a_{12}$. For $n>1$, we use that
\be\nn
\operatorname{Pf}(A) = \sum_{i=1}^{2n-1} (-1)^{i-1} a_{i,2n}
\operatorname{Pf}(A_{i,2n}),
\ee
where $A_{i,j}$ denotes the matrix obtained from
$A$ by removing both $i$-th and $j$-th rows and columns. Our inductive
assumption implies that
\be
\nn
\operatorname{Pf}(A_{i,2n}) =
\frac{\langle 0| v_1 \cdots v_{i-1} v_{i+1}\cdots v_{2n-1}|v\rangle }{
  \langle 0 |v\rangle}.
\ee
It remains only to recall part b).
\end{proof}

\subsection{Pfaffian formula for the tau-function}

Let us apply part c) of Proposition \ref{prop:pfaffian} to
compute the tau-function (\ref{Miwa-tau}). Let 
\be\nn
\label{phi_to_gamma}
\phi_{a,b}(z,w)  := \frac{1}{2} \left.\frac{
\Gamma_a(z)\Gamma_b(w)\tau(\mathbf{t}) }{
\tau(\mathbf{t}) } \right|_{\mathbf{t}=0}
\ee
for $a\leq b$.  We get
\be\label{tau-Pf}
\tau(Z_1,Z_2)=\tau(0)\, B(Z_1,Z_2)\, \operatorname{Pf}(\Phi(Z_1,Z_2)),
\ee
where
\be\nn
\Phi(Z_1,Z_2)=
\begin{bmatrix}
  \Phi^{11} & \Phi^{12}\\
  \Phi^{21} & \Phi^{22}
\end{bmatrix}.
\ee
Here $\Phi^{aa}$ ($a=1,2$) are skew-symmetric matrices whose upper
triangular entries are defined by
\be\label{phiaa}
\Phi^{aa}_{i,j} =\phi_{a,a}(z_{a,i},z_{a,j}), \quad 1\leq i<j\leq N_a,
\ee
$\Phi^{21}=-(\Phi^{12} )^T$, and the entries of
$\Phi^{12}$ are defined by 
\be\label{phi12}
\Phi^{12}_{i,j} ={\bf i} \phi_{1,2}(z_{1,i},z_{2,j}),
\quad 1\leq i\leq N_1,
\quad 1\leq j\leq N_2. 
\ee

The factor $B(Z_1,Z_2)^{-1}$ in the formula (\ref{tau-Pf}) can also be
expressed as a Pfaffian. To derive such an expression it is enough to apply part c) of Proposition  \ref{prop:pfaffian} to (\ref{eq_B}). Let
$\Phi_0(Z_1,Z_2)$ be the matrix corresponding to the vacuum, that is,
to $\tau(\tt)=1$. Then, for $|z_{1,1}|>|z_{1,2}|>\dots > |z_{1,{N_1}}|$ and  $|z_{2,1}|>|z_{2,2}|>\dots > |z_{2,{N_2}}|$ we have
\be\label{tau_assum}
\tau(Z_1,Z_2)=\tau(0)\,  \frac{\operatorname{Pf}(\Phi(Z_1,Z_2))}{\operatorname{Pf}(\Phi_0(Z_1,Z_2))}.
\ee

If $\tt=(\tt_1,\tt_2)$, where $\tt_a=(t_{a,m})_{m\in\z^{\rm odd}_{>0}}$
($a=1,2$), then we denote by $\tau(\tt_1-[z_1^{-1}],\tt_2-[z_2^{-1}])$
the function obtained from $\tau(\tt):=\tau(\tt_1,\tt_2)$ via the
translation $t_{a,m}\mapsto t_{a,m}-2z_a^{-m}/m$. Then
\begin{equation}\label{eq_phi}
\begin{split}
\phi_{1,1}(z,w)  &  =K(z,w) \frac{\tau(-[z^{-1} ]-[w^{-1}],0)}{2\tau(0)}, \\
\phi_{1,2}(z,w) & = 
\frac{\tau(-[z^{-1} ],-[w^{-1}])}{2\tau(0)}, \\
\phi_{2,2}(z,w) & 
=K(z,w) \frac{\tau(0,-[z^{-1} ]-[w^{-1}])}{2\tau(0)}.
\end{split}
\end{equation}
Let
\begin{align}\label{tildephi}
\nn
\tilde{\phi}_{1,1}(z,w)  & :=
\frac{1}{2}\frac{z-w}{z+w}\, \frac{\tau(-[z^{-1} ]-[w^{-1}],0)}{\tau(0)}, \\
\tilde{\phi}_{1,2}(z,w) & := 
\frac{1}{2}\frac{\tau(-[z^{-1} ],-[w^{-1}])}{\tau(0)}, \\
\nn
\tilde{\phi}_{2,2}(z,w) & := 
\frac{1}{2}\frac{z-w}{z+w}\, \frac{\tau(0,-[z^{-1} ]-[w^{-1}])}{\tau(0)},
\end{align}
so that ${\phi}_{a,b}(z,w)  = \iota_{|z|>|w|} \tilde{\phi}_{a,b}(z,w) $. 
The following proposition describes the difference between $\phi_{a,a}$ and $\tilde{\phi}_{a,a}$:
\begin{proposition}\label{prop_exp}
\be\nn
\tilde{\phi}_{a,a}(z,w)- \frac{1}{2}\, \frac{z-w}{z+w} \in {\mathbb C}[\![z^{-1},w^{-1}]\!],
\ee
moreover, the difference vanishes when $|z|=|w|=\infty$.
\end{proposition}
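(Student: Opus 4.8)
The plan is to read the statement off from the definition (\ref{tildephi}) together with one elementary parity observation. Set $\tilde K(z,w):=\frac{z-w}{z+w}$ and
\[
F_1(z,w):=\frac{\tau(-[z^{-1}]-[w^{-1}],0)}{\tau(0)},\qquad
F_2(z,w):=\frac{\tau(0,-[z^{-1}]-[w^{-1}])}{\tau(0)},
\]
so that $\tilde\phi_{a,a}(z,w)=\tfrac12\,\tilde K(z,w)\,F_a(z,w)$, and hence $\tilde\phi_{a,a}(z,w)-\tfrac12\tilde K(z,w)=\tfrac12\,\tilde K(z,w)\,\bigl(F_a(z,w)-1\bigr)$. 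I would pass to the variables $u=z^{-1}$, $v=w^{-1}$. Each $F_a$ is then a genuine element of ${\mathbb C}[\![u,v]\!]$ with $F_a(0,0)=1$, since it is obtained by substituting polynomials without constant term (the entries $-\tfrac{2}{m}(u^m+v^m)$ of the shift) into the formal power series $\tau$ and dividing by $\tau(0)\neq0$, whereas $\tilde K=\frac{v-u}{u+v}$ lives only in the localization ${\mathbb C}[\![u,v]\!][(u+v)^{-1}]$. So the whole statement reduces to showing that $u+v$ divides $F_a-1$ in ${\mathbb C}[\![u,v]\!]$.

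The crucial input is that $F_a(u,-u)=1$. Indeed, all the BKP time variables carry odd indices $m$, so under $v\mapsto -u$ the nontrivial argument $-[z^{-1}]-[w^{-1}]$ becomes the sequence $\bigl(-\tfrac{2}{m}u^m-\tfrac{2}{m}(-u)^m\bigr)_m=\bigl(-\tfrac{2}{m}u^m+\tfrac{2}{m}u^m\bigr)_m=0$; hence the tau-function in the numerator of $F_a$ collapses to $\tau(0,0)=\tau(0)$ and $F_a(u,-u)=1$. Now the linear change $(u,v)\mapsto(u,u+v)$ is a coordinate automorphism of ${\mathbb C}[\![u,v]\!]$ carrying the ideal $(u+v)$ to a coordinate ideal, so ${\mathbb C}[\![u,v]\!]/(u+v)\cong{\mathbb C}[\![u]\!]$, the quotient map being $g(u,v)\mapsto g(u,-u)$; since $F_a-1$ maps to $0$, we may write $F_a-1=(u+v)H_a$ with $H_a\in{\mathbb C}[\![u,v]\!]$.

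Combining these,
\[
\tilde\phi_{a,a}(z,w)-\tfrac12\tilde K(z,w)
=\tfrac12\,\frac{v-u}{u+v}\,(u+v)\,H_a(u,v)
=\tfrac12\,(v-u)\,H_a(u,v)\ \in\ {\mathbb C}[\![u,v]\!]={\mathbb C}[\![z^{-1},w^{-1}]\!],
\]
which is exactly the first claim; and the factor $v-u=w^{-1}-z^{-1}$ forces the constant term to vanish, i.e.\ the difference vanishes at $|z|=|w|=\infty$. The cases $a=1$ and $a=2$ are word-for-word identical, the only ingredients being the definition (\ref{tildephi}), the oddness of all indices $m$, and $\tau(0)\neq0$. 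I do not anticipate a real obstacle here; the one step I would write out carefully rather than wave at is the implication \emph{$F_a-1$ vanishes on $v=-u$ $\Rightarrow$ $u+v$ divides $F_a-1$} in the ring of formal power series, which is precisely the coordinate-change remark just made.
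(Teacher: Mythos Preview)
Your proof is correct and follows essentially the same idea as the paper's: both exploit that the BKP time indices $m$ are odd, so that the shifted argument $-[z^{-1}]-[w^{-1}]$ vanishes when $w=-z$, forcing $F_a-1$ to be divisible by $u+v=z^{-1}+w^{-1}$. The paper carries this out by writing down the explicit factorization $\tfrac{z-w}{z+w}\bigl(z^{-(2k+1)}+w^{-(2k+1)}\bigr)=(w^{-1}-z^{-1})(z^{-2k}-z^{-(2k-1)}w^{-1}+\cdots+w^{-2k})$ term by term, whereas you package the same divisibility abstractly via the quotient ${\mathbb C}[\![u,v]\!]/(u+v)\cong{\mathbb C}[\![u]\!]$; the two arguments are equivalent.
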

\begin{proof}
Note that a tau-function of the 2-BKP hierarchy depends only on odd variables $t_{a,2k+1}$. So, for $a= b$ the ratio of the tau-functions on the RHS of (\ref{tildephi}) is in ${\mathbb C}[\![z^{-1}+w^{-1},z^{-3}+w^{-3},\dots]\!]$, and all terms, which contain at least one of the variables $t_{a,2k+1}$, are proportional to
\be
\nn
\frac{z-w}{z+w}\left(\frac{1}{z^{2k+1}}+\frac{1}{w^{2k+1}}\right)=\left(\frac{1}{w}-\frac{1}{z}\right)\left(\frac{1}{z^{2k}}-\frac{1}{z^{2k-1}w}+\dots+\frac{1}{w^{2k}}\right).
\ee
Therefore, the only term singular at $z=-w$ comes from the constant term in the
tau-function. Moreover, the RHS of this equation vanishes when $|z|=|w|=\infty$.
\end{proof}
It is obvious that ${\phi}_{1,2}(z,w)=\tilde{\phi}_{1,2}(z,w).$ Thus, we have
\begin{corollary}\label{cor_phiphi}
\be\nn
\tilde{\phi}_{a,b}(z,w)- \frac{1}{2}\delta_{a,b}\tilde{K}(z,w)={\phi}_{a,b}(z,w)-\frac{1}{2} \delta_{a,b} K(z,w).
\ee
\end{corollary}

Now we can relax the assumptions $|z_{1,1}|>|z_{1,2}|>\dots > |z_{1,{N_1}}|$ and  $|z_{2,1}|>|z_{2,2}|>\dots > |z_{2,{N_2}}|$. Therefore for arbitrary tau-function of the 2-BKP hierarchy in the Miwa parametrization we have the following Pfaffian formula:
\begin{proposition}\label{tau_as_Pf}
Let $\tilde{\Phi}$ be the matrix defined in the same way as $\Phi$,
except that in the definitions (\ref{phiaa})--(\ref{phi12}) of the
entries $\phi$'s are replaced by $\tilde{\phi}$'s. Then
  \be\nn
\tau(Z_1,Z_2)=\tau(0)\frac{2^{(N_1+N_2)/2} \operatorname{Pf}(\tilde{\Phi}(Z_1,Z_2))}
{{\bf i}^{N_1^2} \prod_{a=1,2}\prod_{1\leq i<j\leq N_a} \frac{z_{a,i}-z_{a,j}}{z_{a,i}+z_{a,j}}}.
\ee
\end{proposition}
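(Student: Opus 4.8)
The plan is to upgrade the Pfaffian formula (\ref{tau-Pf})--(\ref{tau_assum}), which was derived under the radial--ordering assumption $|z_{a,1}|>|z_{a,2}|>\cdots>|z_{a,N_a}|$ ($a=1,2$), to an identity valid for all $Z_1,Z_2$. The mechanism is the one indicated by Corollary \ref{cor_phiphi}: passing from the $\phi$'s to the $\tilde\phi$'s, and from $K$ to $\tilde K$, replaces each entry of the Pfaffian matrix and each factor in $B(Z_1,Z_2)^{-1}$ by the \emph{rational function} of which it is a Laurent expansion. Thus the right-hand side of Proposition \ref{tau_as_Pf} is the ``analytic continuation'' of the right-hand side of (\ref{tau-Pf}), and the content of the proposition is that this continuation is in fact a genuine formal power series, hence coincides with its own expansion in every chamber.

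Concretely, I would fix the chamber $|z_{a,1}|>\cdots>|z_{a,N_a}|$ and let $\iota$ denote the operation of expanding a rational function of $z_{1,*},z_{2,*}$ whose only poles lie along the divisors $z_{a,i}\pm z_{a,j}=0$ into a series in the ratios $z_{a,j}/z_{a,i}$ ($i<j$); on its domain $\iota$ is a ring homomorphism. Since $\phi_{a,b}(z,w)=\iota_{|z|>|w|}\tilde\phi_{a,b}(z,w)$ and $K(z,w)=\iota_{|z|>|w|}\tilde K(z,w)$, in this chamber every entry of $\Phi$ is $\iota$ applied to the corresponding entry of $\tilde\Phi$, and $\prod_{a}\prod_{i<j}K(z_{a,i},z_{a,j})=\iota\big(\prod_a\prod_{i<j}\tilde K(z_{a,i},z_{a,j})\big)$. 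As $\operatorname{Pf}$ is a polynomial in the matrix entries, $\operatorname{Pf}(\Phi)=\iota(\operatorname{Pf}(\tilde\Phi))$, and combining this with (\ref{tau-Pf}) and the formula for $B(Z_1,Z_2)$ gives, in the chosen chamber,
\[
\tau(Z_1,Z_2)=\iota\!\left(\tau(0)\,\frac{2^{(N_1+N_2)/2}\operatorname{Pf}(\tilde\Phi(Z_1,Z_2))}{{\bf i}^{N_1^2}\prod_{a=1,2}\prod_{1\leq i<j\leq N_a}\frac{z_{a,i}-z_{a,j}}{z_{a,i}+z_{a,j}}}\right)=:\iota(\mathcal{Q}).
\]

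It then remains to show that $\mathcal{Q}$ is a genuine element of ${\mathbb C}[\![z_{a,i}^{-1}]\!]$, symmetric under permutations within each group of variables; for such $\mathcal{Q}$ one has $\iota(\mathcal{Q})=\mathcal{Q}$, and the proposition follows. Symmetry is immediate: swapping $z_{a,i}\leftrightarrow z_{a,j}$ permutes the two rows and the two columns of $\tilde\Phi$ attached to $z_{a,i}$ and $z_{a,j}$, hence changes $\operatorname{Pf}(\tilde\Phi)$ by a sign, while the antisymmetric denominator $\prod_{i<j}\tfrac{z_{a,i}-z_{a,j}}{z_{a,i}+z_{a,j}}$ changes by the same sign. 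For regularity one uses Proposition \ref{prop_exp}: each entry $\tilde\phi_{a,a}(z_{a,i},z_{a,j})$ is $\tfrac12\tfrac{z_{a,i}-z_{a,j}}{z_{a,i}+z_{a,j}}$ plus a power series in $z_{a,i}^{-1},z_{a,j}^{-1}$, and $\tilde\phi_{1,2}$ is such a power series, so the only poles of $\operatorname{Pf}(\tilde\Phi)$ lie along the $z_{a,i}+z_{a,j}=0$; expanding $\operatorname{Pf}(\tilde\Phi)$ over perfect matchings, the only matching contributing a pole along $z_{a,i}+z_{a,j}=0$ is the one pairing the index of $z_{a,i}$ with that of $z_{a,j}$, so that pole is simple and is cancelled by the factor $z_{a,i}+z_{a,j}$ in $\mathcal{Q}$. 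Finally, when $z_{a,i}=z_{a,j}$ the two rows of $\tilde\Phi$ attached to them coincide (using the symmetry of $\tau(-[z^{-1}]-[w^{-1}],0)$ and of $\tilde\phi_{1,2}$, together with $\tilde\phi_{a,a}(z,z)=0$), so $\operatorname{Pf}(\tilde\Phi)$ vanishes there and absorbs the factor $z_{a,i}-z_{a,j}$ introduced by the denominator; hence $\mathcal{Q}$ has no poles.

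\textbf{Main obstacle.} I expect the regularity step of the previous paragraph to be the only delicate point: after multiplying in the rational prefactor $\prod\tfrac{z_{a,i}+z_{a,j}}{z_{a,i}-z_{a,j}}$ one must be sure that no anti-diagonal pole survives and that the diagonal vanishing of $\operatorname{Pf}(\tilde\Phi)$ is of exactly the right order, and it is precisely here that Proposition \ref{prop_exp} and Corollary \ref{cor_phiphi} enter. Everything else — the ring-homomorphism property of $\iota$, the compatibility of $\operatorname{Pf}$ with $\iota$, and the concluding observation that $\iota$ fixes the now-verified power series $\mathcal{Q}$, so that the chamber-dependent identity becomes the unconditional statement of Proposition \ref{tau_as_Pf} — is routine bookkeeping.
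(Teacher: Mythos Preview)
Your proposal is correct and follows essentially the same approach as the paper's proof: show that the right-hand side is a genuine element of ${\mathbb C}[\![z_{a,i}^{-1}]\!]$ (the paper notes only the vanishing of $\operatorname{Pf}(\tilde\Phi)$ at $z_{a,i}=z_{a,j}$, leaving the cancellation of the anti-diagonal poles implicit via Proposition~\ref{prop_exp}), and then observe that in the radially ordered chamber it coincides with the already-established formula~(\ref{tau_assum}). Your write-up is simply a more detailed execution of these same two steps.
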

\begin{proof}
It is easy to show that the numerator vanishes when $z_{a,i}=z_{a,j}$ for some $a\in \{1,2\}$ and $i\neq j$. Thus the RHS is in ${\mathbb C}[\![z_{1,1}^{-1},\dots,z_{1,N_1}^{-1},z_{2,1}^{-1},\dots,z_{2,N_2}^{-1}]\!]$. Moreover, for  $|z_{1,1}|>|z_{1,2}|>\dots > |z_{1,{N_1}}|$ and  $|z_{2,1}|>|z_{2,2}|>\dots > |z_{2,{N_2}}|$ it coincides with (\ref{tau_assum}), which completes the proof.
\end{proof}

\section{Grassmannian point for the simple singularity of type D}\label{sec:Gr}

In this section we recall the Grassmannian description of the 2-BKP hierarchy
\cite{Sh} and construct the integral description of the point of the BKP Grassmannian for the
tau-function which governs the simple singularity of type D.

\subsection{BKP Grassmannian}

We follow the notation from \cite{CM}, Section 1. 
Let $V=\CC(\!(z^{-1})\!) \oplus \CC(\!(z^{-1})\!)$ be the vector space
of formal Laurent series in $z^{-1}$ with coefficients in $\CC^2$. For
$f(z)=(f_1(z),f_2(z)) \in V$ and $g(z)=(g_1(z),g_2(z)) \in V$ put
\be\nn
(f(z),g(z)) := \sum_{i=1,2} \operatorname{Res}_{z=0} f_i(z)g_i(-z)\frac{dz}{z}.
\ee
Note that $(\ ,\ )$ is a non-degenerate symmetric bilinear pairing on
$V$. Let us define
\begin{align}\nn
  U_0 & = \CC(e_1+\ii e_2) +\CC[z]z e_1 + \CC[z]z e_2,\\
  \nn
  V_0 & = \CC(e_1-\ii e_2) + \CC[\![z^{-1}]\!]z^{-1} e_1 +
        \CC[\![z^{-1}]\!]z^{-1} e_2,
\end{align}
where $e_1=(1,0)$ and $e_2=(0,1)$ is the standard basis of
$\CC^2$. Both $U_0$ and $V_0$ are maximally isotropic subspaces and we
have a direct sum decomposition $V=V_0\oplus U_0$. Let $\pi:V\to U_0$
be the projection along $V_0$. The big cell
$\operatorname{Gr}_2^{(0)}$ of the 2-BKP Grassmannian is the set of 
all linear subspaces $U\subset V$ satisfying the following two conditions:
\begin{enumerate}
\item[(i)]
  $\pi|_U:U\to U_0$ is an isomorphism.
\item[(ii)]
  $U$ is a maximally isotropic subspace.
\end{enumerate}
Recall that a subspace $U\subseteq V$ is said to be {\em isotropic} if
$(u_1,u_2)=0$ for all $u_1,u_2\in U$.  If $U$ is a maximal element in
the set of all isotropic subspaces of $V$, then $U$ is called
maximally isotropic.

Suppose now that $\tau(\tt)\in \CC[\![\tt]\!]$ is a
formal power series, such that $\tau(0)\neq 0$. Then we define
\be\label{wave-1}
\Psi(\tt,z) := \Psi^{(1)}(\tt,z) e_1 +\ii \Psi^{(2)}(\tt,z) e_2 \quad \in
\quad V[\![\tt]\!],
\ee
where
\be\label{wave-2}
\Psi^{(a)}(\tt,z) =\frac{\Gamma_a(\tt,z)
  \tau(\tt)}{\tau(\tt)}. 
\ee
Let $U_\tau\subset V$ be the subspace spanned by the coefficients of
the Taylor's series expansion of $\Psi(\tt,z)$ at $\tt=0$. According
to Shiota (see \cite{Sh}, Section 3.1), the formal power series
$\tau(\tt)$ is a tau-function of the 2-BKP hierarchy if and only if
$U_\tau\in \operatorname{Gr}_2^{(0)}$. Moreover, the map $\tau\mapsto
U_\tau$ is a one-to-one correspondence between the tau-functions of the
2-BKP hierarchy and the points of $\operatorname{Gr}_2^{(0)}$. If
$\tau(\tt)$ is a tau-function of the 2-BKP hierarchy, then the
corresponding $\Psi(\tt,z)$ defined by (\ref{wave-1})--(\ref{wave-2})
is called the {\em wave function}.

The main goal of this section is to prove Theorem \ref{t1} and to
construct the corresponding point in the Grassmannian
$\operatorname{Gr}_2^{(0)}$ in terms of steepest descent asymptotic of
certain integrals. 
Our proof of Theorem \ref{t1} is based on the notion of the {\em
  Kac--Schwarz operators} for a given 
$U\in \operatorname{Gr}_2^{(0)}$, that is, differential operators $a$
such that   
\be\nn
a \, U \subset U.
\ee
Such operators were introduced first in \cite{KS}, where they proved
to be very convenient for the investigation of the solutions of the KP hierarchy associated to the  simple singularities of type A.

\subsection{Wave function and quantum spectral curve}
Let us reformulate the statement of Theorem \ref{t1} in terms of the
Grassmannian $\operatorname{Gr}_2^{(0)}$. Following \cite{CM}, Section
1, let us introduce two operators $a$ and $b$ ($a=\ell_{-1}$ in the
notation of \cite{CM}) 
\be\nn
a=(a_1,a_2),
\ee
where
\be\label{aoper}
a_1:=-{\bf i}z+z^{-h}\left(\frac{z}{h}\frac{\p}{\p z}-\frac{1}{2}\right),\\
a_2:=\frac{1}{2z^2}\left(z\frac{\p}{\p z}-1\right),
\ee
are the first order differential operators and
\be\nn
b=(z^h,z^2)
\ee
acts by multiplication. We have the following proposition.
\begin{proposition}\label{prop:ref_t1}
Let $U\in \operatorname{Gr}_2^{(0)}$ be a subspace corresponding to a
tau-function $\tau$ of the 2-BKP hierarchy. Then 

a) $\tau$ is a
tau-function of the $(h,2)$-reduction if and only if $b  \,  U \subset U$.

b) $\tau$ satisfies the string equation (\ref{str_eqn}) if and only if $a  \,  U\subset
U$. 
\end{proposition}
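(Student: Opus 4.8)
The strategy is to translate each statement about the tau-function into a statement about the wave function $\Psi(\tt,z)$ and its value at $\tt=0$, and then into a statement about the spanning coefficients of $U_\tau$. Recall that $U_\tau$ is spanned by the Taylor coefficients at $\tt=0$ of $\Psi(\tt,z)=\Psi^{(1)}(\tt,z)e_1+\ii\Psi^{(2)}(\tt,z)e_2$, where $\Psi^{(a)}(\tt,z)=\Gamma_a(\tt,z)\tau(\tt)/\tau(\tt)$. Since $U_\tau$ is closed under $\p_{t_{a,m}}$ at $\tt=0$ (differentiating a family of elements of $U_\tau$ keeps us in $U_\tau$), it suffices to check, for a first-order differential operator $c=(c_1,c_2)$ acting on $V$, that $c\,U_\tau\subset U_\tau$ holds if and only if the single vector $c\cdot\Psi(\tt,z)$, when re-expanded, is again a $\CC[\![\tt]\!]$-combination of the generators — equivalently (by the Grassmannian characterization of Shiota) if and only if a certain bilinear identity on $\tau$ holds. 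This is the standard Kac--Schwarz dictionary, so I would state it as a lemma and only sketch it: the point is that $c\,U_\tau\subset U_\tau$ is equivalent to the existence of a (pseudo-)differential ``dressing'' relation, which in bosonic form becomes an identity of the shape $(\text{operator annihilating }\tau)=0$.

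\textbf{Part (a).} For the multiplication operator $b=(z^h,z^2)$: the condition $b\,U_\tau\subset U_\tau$ says that $z^h\Psi^{(1)}e_1+\ii z^2\Psi^{(2)}e_2$ lies in $U_\tau[\![\tt]\!]$. Using $\Psi^{(a)}(\tt,z)=\Gamma_a(\tt,z)\tau/\tau$ and the explicit form of $\Gamma_a$, multiplication by $z^{h_a}$ inside the vertex operator $\Gamma_a(\tt,z)$ is exactly what appears in the bilinear operator $\Omega_m$ for $m\geq 1$ versus $m=0$. Concretely, I would show that $b\,U_\tau\subset U_\tau$ for all powers, i.e. $z^{h_a\ell}\Psi^{(a)}\in U_\tau$ for all $\ell\geq 0$, is equivalent to $\Omega_m(\tau\otimes\tau)=0$ for all $m\geq 1$; combined with $\Omega_0(\tau\otimes\tau)=0$ (which holds because $\tau$ is a 2-BKP tau-function, i.e. $U_\tau\in\Gr_2^{(0)}$), this is precisely the $(h,2)$-reduction. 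The forward direction uses the residue pairing on $V$: $b$ is self-adjoint for $(\ ,\ )$ up to sign, so if $U_\tau$ is maximally isotropic and $b$-invariant then pairing the defining relation $\Omega_0$ against powers of $b$ generates the $\Omega_m$. The converse unwinds the same computation.

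\textbf{Part (b).} For $a=(a_1,a_2)$ with $a_1=-\ii z+z^{-h}(\tfrac{z}{h}\p_z-\tfrac12)$ and $a_2=\tfrac{1}{2z^2}(z\p_z-1)$: I would compute $a\cdot\Psi(\tt,z)$ at $\tt=0$ directly. The term $-\ii z$ acting on $\Psi^{(1)}$ corresponds to the $-\ii\,\p_{t_{1,1}}$ piece of $L_{-1}$ — this is where the coefficient $-\ii$ in \eqref{str_eqn} comes from and why $a$ is attached to the first component. The operators $z\p_z$ are, under the boson--fermion correspondence / Miwa-type expansion, the generating fields of the Virasoro-type operators $\sum_m :J^a_m J^a_{-m-h_a}:$; the shift $z^{-h_a}$ produces exactly the $-m-h_a$ index shift, and the constants $-\tfrac12$, $-1$ account for the normal-ordering anomaly. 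The cleanest route is: (i) recall (or quote from \cite{CM}, Section 1) that $L_{-1}$ acting on $\tau$ corresponds, via $\tau\mapsto U_\tau$, to the operator $a$ acting on $V$, so that $L_{-1}\tau=0\iff a\,U_\tau\subset U_\tau$ together with a compatibility at the ``dilaton'' level; (ii) verify that the specific normalization in \eqref{aoper} matches the specific normalization of $J^a_m$ in \eqref{Jam} and of $L_{-1}$ in \eqref{str_eqn}, which is a bookkeeping check with the factors $2$ and $\ii$. I expect \textbf{the main obstacle} to be precisely this bookkeeping: getting the constant terms ($-\tfrac12$ in $a_1$, $-1$ in $a_2$) and the factor $-\ii$ to line up with the chosen normalizations of the currents and of $L_{-1}$, since an error there would shift $a$ by a scalar operator and break invariance; I would handle it by checking the action on the vacuum (the $\tau\equiv 1$ case), where $U_\tau=U_0$ and the computation is explicit, pinning down all constants, and then the general case follows because the difference of two wave functions is governed by the same linear operator.
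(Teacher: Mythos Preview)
The paper does not prove this proposition: immediately after the statement it writes ``Part a) is Corollary 1b in \cite{CM} and part b) is Lemma 9 in \cite{CM}.'' So there is no in-paper argument to compare against; the authors simply import the result from Cheng--Milanov.

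Your proposal, by contrast, outlines an actual proof along the standard Kac--Schwarz route (translate invariance of $U_\tau$ under $b$ or $a$ into an identity for $\Psi(\tt,z)$, and then into the reduction equations $\Omega_m(\tau\otimes\tau)=0$ or the string equation $L_{-1}\tau=0$). That is exactly the method used in \cite{CM}, so in spirit you are reconstructing the cited proof rather than offering a different one. Two remarks on the content of your sketch. First, in part (b) your step ``(i) recall (or quote from \cite{CM}, Section 1) that $L_{-1}$ acting on $\tau$ corresponds, via $\tau\mapsto U_\tau$, to the operator $a$'' is circular as written: that correspondence \emph{is} the proposition, so if you intend a self-contained proof you must carry out the vertex-operator calculation $a_c\,\Gamma_c(\tt,z) = \Gamma_c(\tt,z)\,(\text{bosonic operator})$ explicitly rather than quote it. Second, your diagnosis of the ``main obstacle'' is accurate: the constants $-\tfrac12$ and $-1$ in $a_1,a_2$ arise from the normal-ordering shift when commuting $z\p_z$ past the vertex operator and from the $z^{-h_a}$ prefactor, and checking them on $\tau\equiv 1$ (where $U_\tau=U_0$) does pin them down. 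If you flesh out that calculation, your argument would constitute a genuine proof; as it stands it is a correct plan that still leans on the very reference the paper cites.
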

Part a) is  Corollary 1b in \cite{CM} and part b) is Lemma 9 in
\cite{CM}. Therefore, in order to prove Theorem \ref{t1} we have to
prove that there exists a unique subspace 
$U\in \operatorname{Gr}_2^{(0)}$, such that, $a  \,  U\subset U$ and
$b  \,  U\subset U$, that is, $a$ and $b$ are Kac--Schwarz operators for
$U$. In fact, we will construct an explicit basis of $U$ in terms of
stationary phase asymptotics of certain steepest descent integrals.

To begin with, note that $a$ and $b$ satisfy the canonical commutation relation
\be\nn
[a,b]=1.
\ee
Let $\Psi=\Psi^{(1)}e_1+{\bf i} \Psi^{(2)} e_2 \in U$ be such that 
\be\label{psi12}
\Psi^{(a)}(z)=1+O(z^{-1}).
\ee
To describe it, let us introduce degree grading in the space of differential
operators $\CC[z,z^{-1}][\partial_z]$, such that, 
\be\nn
\deg z^{-1} = \deg \frac{\p}{\p z}=-1.
\ee
Then
\be\nn
a^{h+1}=\left((-{\bf i}z)^{h+1}+\frac{h+1}{h}(-{\bf i})^h z\frac{\p}{\p z}+\dots,\dots \right),
\ee
where by $\dots$ we denote the terms of negative degree. The Kac--Schwarz operator
\be\nn
A:=ba+\frac{1}{2}-{\bf i}^h a^{h+1}=\left(-z\frac{\p}{\p z}+\dots,\frac{z}{2}\frac{\p}{\p z}+\dots\right)
\ee
does not contain terms of positive degree, therefore
\be\nn
A \Psi=O(z^{-1})e_1+O(z^{-1})e_2.
\ee
The LHS belongs to $U$, the RHS belongs to $V_0$, and since by
definition $\pi|_U$ is an isomorphism, we conclude that
\be\label{qsceq}
A \Psi=0.
\ee
We refer to (\ref{qsceq}) as the  {\emph {quantum spectral curve}} equation. 
\begin{lemma}\label{lemma1}
  Suppose that $a$ and $b$ are Kac--Schwarz operators for some
  subspace $U\in \Gr_2^{(0)}$ and let $\Psi(\tt,z)$ be the corresponding 
  wave function.  The quantum spectral curve equation (\ref{qsceq})
  has a unique, up to normalization, solution in $V$. Being normalized, this solution has the asymptotics (\ref{psi12}) and coincides with $\Psi(0,z)$.
\end{lemma}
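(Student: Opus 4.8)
The statement has two parts: existence of a solution in $V$ with the stated asymptotics, and uniqueness up to scalar. Since $\Psi(0,z)$ is the $\tt=0$ value of the wave function and by (\ref{psi12})--(\ref{qsceq}) it lies in $U$, satisfies $A\Psi=0$, and normalizes as $\Psi^{(a)}=1+O(z^{-1})$, it is a solution of the required form; so the content is really that \emph{any} solution in $V$ is a scalar multiple of this one. The plan is to analyze the operator $A=ba+\tfrac12-\ii^h a^{h+1}$ directly on $V=\CC(\!(z^{-1})\!)^{\oplus 2}$ and show its kernel in $V$ is one-dimensional.

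First I would write $A=(A_1,A_2)$ componentwise, where from the displayed formulas $A_1=-z\partial_z+(\text{lower degree})$ and $A_2=\tfrac{z}{2}\partial_z+(\text{lower degree})$; crucially $A$ has \emph{no terms of positive degree} in the grading $\deg z^{-1}=\deg\partial_z=-1$. Thus if $g=(g_1,g_2)\in V$ is written in homogeneous components $g=\sum_{k} g^{(k)}$ with $g^{(k)}$ of degree $k$ (and $g^{(k)}=0$ for $k\gg 0$ since $g\in V$), the equation $Ag=0$ decomposes into a triangular system: the top-degree part gives the leading-symbol equations $-z\partial_z g_1^{\rm top}=0$ and $\tfrac{z}{2}\partial_z g_2^{\rm top}=0$, whose solutions in the relevant space force $g^{\rm top}$ to be a constant vector $(c_1,c_2)$ in degree $0$; and then each lower-degree homogeneous component $g^{(k)}$, $k<0$, is determined recursively by the previous ones because the degree-zero part of $A$ acts on the degree-$k$ piece as $-z\partial_z+c$ (resp. $\tfrac{z}{2}\partial_z+c'$) with $c,c'$ chosen so that this operator is \emph{invertible} on homogeneous elements of each negative degree (it multiplies $z^{k}$ by a nonzero scalar $k+c$ for all the $k$ that occur). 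So the whole solution is reconstructed uniquely from the pair $(c_1,c_2)\in\CC^2$, giving $\dim\ker_V A\le 2$ a priori.

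Next I would cut this down to dimension $1$ using the isotropy/Grassmannian constraint, or more elementarily the relation between the two components forced by $A$ itself. The point is that $A$ does not act diagonally: the off-diagonal coupling (through $b=(z^h,z^2)$ and $a^{h+1}$, which mix the two slots only via the single operator $a=(a_1,a_2)$ acting on a common $z$) ties $c_1$ and $c_2$ together. Concretely, $ba$ applied to a degree-$0$ constant produces, in the first slot, $z^h a_1(c_1)=z^h(-\ii z c_1+\cdots)$, a positive-degree term that must cancel against the corresponding positive-degree term of $-\ii^h a^{h+1}$ acting on $(c_1,c_2)$; matching these cancellations (they are forced since $A$ has no positive-degree part) yields a linear relation between $c_1$ and $c_2$, cutting the kernel down to a line. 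Equivalently, one invokes that $b\,U\subset U$ and $a\,U\subset U$ are already known in this Lemma's hypotheses, so $U$ is an actual point of $\Gr_2^{(0)}$ and contains a \emph{unique} element normalized as in (\ref{psi12}) (this is the defining property $\pi|_U\colon U\to U_0$ an isomorphism, applied to the preimage of $(1,1)\cdot(e_1,e_2)$-type data); combined with (\ref{qsceq}) this gives existence, and the dimension count above gives that no other normalization of a kernel vector survives, hence uniqueness up to scalar.

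Finally I would record that the normalized solution has asymptotics (\ref{psi12}) and equals $\Psi(0,z)$: by construction $\Psi(0,z)\in U$, satisfies $A\Psi(0,z)=0$ by (\ref{qsceq}), and has $\Psi^{(a)}(0,z)=1+O(z^{-1})$ by (\ref{wave-2}) since $\Gamma_a(\tt,z)\tau/\tau$ is $1$ plus negative powers of $z$; uniqueness from the previous paragraph then identifies it with the solution found by the recursion. The main obstacle is the middle step—pinning down the dimension at exactly $1$ rather than $2$—because it requires either a careful bookkeeping of how the positive-degree terms of $ba$ and of $\ii^h a^{h+1}$ cancel (extracting the precise linear relation on $(c_1,c_2)$) or a clean appeal to the isotropy condition defining $\Gr_2^{(0)}$; the triangular recursion establishing $\dim\le 2$ and the regularity of the recursion operators is routine by comparison.
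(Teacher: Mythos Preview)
Your middle step contains a genuine error: the operator $A$ \emph{does} act diagonally on $V=\CC(\!(z^{-1})\!)^{\oplus 2}$. Both $a=(a_1,a_2)$ and $b=(z^h,z^2)$ act componentwise, hence so does $A=ba+\tfrac12-\ii^h a^{h+1}=(A_1,A_2)$, with $A_i$ touching only the $i$th slot. The positive-degree cancellation between $ba$ and $\ii^h a^{h+1}$ that you invoke occurs entirely within each component separately (this is precisely how one sees that $A$ has no positive-degree part) and produces no relation whatsoever between $c_1$ and $c_2$. Consequently $\ker_V A=\ker A_1\oplus\ker A_2$ is two-dimensional, and your attempt to cut it to a line cannot succeed. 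The isotropy alternative does not help either: it constrains elements of $U$, whereas the lemma asserts uniqueness in all of $V$.

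The gap comes from over-reading ``unique, up to normalization.'' The paper is not claiming $\ker_V A$ is a line; it means the solution is unique once one imposes the normalization (\ref{psi12}) on \emph{both} components, i.e.\ once the pair of leading constants is fixed to $(1,1)$. Your triangular recursion, applied to each slot separately, is exactly the paper's argument: the degree-zero part of $A_1$ (resp.\ $A_2$) is $-z\partial_z$ (resp.\ $\tfrac{z}{2}\partial_z$), which forces the top term of each $\Psi^{(i)}$ to degree $0$ and then determines all lower coefficients from it. The identification with $\Psi(0,z)$ is not done by checking $A\Psi(0,z)=0$ directly but via the Grassmannian: the $\Psi\in U$ constructed just before the lemma and $\Psi(0,z)\in U$ both project under $\pi$ to $e_1+\ii e_2$, and since $\pi|_U$ is an isomorphism they coincide.
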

\begin{proof}
  Let us prove the uniqueness. Suppose that $\Psi(z)\in V$ is a solution to
  the quantum spectral curve equation. Then the leading term of the Laurent series expansion is a non-vanishing constant. 
  Substituting the Laurent series expansion in $z^{-1}$ of $\Psi(z)$ in (\ref{qsceq}) and comparing
  the coefficients in front of the powers of $z$, we get a recursion
  which uniquely determines  the coefficients of $\Psi(z)$.

  By definition both $\Psi(z)$ and $\Psi(0,z)$ belong to $U$. Note
  that their projections via $\pi:V\to U_0$ coincide (with $e_1+\ii
  e_2$). Since $U\in \Gr_2^{(0)}$ the projection $\pi|_{U}$ is an
  isomorphism, so $\Psi(z)=\Psi(0,z)$. 
\end{proof}

Put $f(x)=x^{2h+2}-(h+1) x^2$. Let $u_a=f(\xi_a)$, where $\xi_1=1$ and
$\xi_2=0$ are two critical points of $f$, that is $u_1=-h$, $u_2=0$. 
Then the components of the wave
function can be identified with the steepest descent asymptotics of the
following integrals:
\be\label{oi}
\Psi^{(a)} \sim c_a \sqrt{\frac{\lambda_a}{\pi}} \int_{\gamma_a}
e^{\lambda_a (f(x)-u_a) } dx,\quad \lambda_a\to \infty,\,\,\,\,\,\,\,\,\, a \in \{1,2\}.
\ee
Here $\lambda_a(z):=\tfrac{\mathbf{i} z^{h_a+h_a/h}}{h+1}$, that is,
\be\nn
\lambda_1(z):=\tfrac{\mathbf{i} z^{h+1}}{h+1},\,\,\,\,\,\, 
\lambda_2(z):=\tfrac{\mathbf{i} z^{2+\frac{2}{h} }}{h+1}.
\ee
The contours $\gamma_a$ ($a=1,2$) are chosen as follows. Let us denote
by $\mathbb{D}_a\subset \CC$ the disk with center at the critical
point $\xi_a$ and a sufficiently small radius, so that the Morse lemma
applies, i.e., there exists a holomorphic coordinate $X_a(x)$ in
$\mathbb{D}_a$, such that, $f(x)=u_a-\tfrac{X_a(x)^2}{2}$ for all
$x\in \mathbb{D}_a$.  Let 
\be\nn
\mathbb{D}_a^-:=\{
x\in \partial \mathbb{D}_a\ |\ 
\operatorname{Re}(\lambda_a (f(x)-u_a)<0
\},
\ee
where $\partial \mathbb{D}_a$ denotes the boundary of $\mathbb{D}_a$. 
Using the Morse coordinate $X_a$ it is easy to see that
$\mathbb{D}^-_a$ consists of two disconnected arcs. Let us choose
the integration path $\gamma_a$ to be a path in $\mathbb{D}_a$
whose endpoints are on $\mathbb{D}_a^-$.  The asymptotic expansion of
  the integral in (\ref{oi}) depends only on the homology class of
  $\gamma_a$ in $H_1(\mathbb{D}_a,\mathbb{D}_a^-;\ZZ) \cong \ZZ$.
  This fact is easy to prove by modifying the standard argument of the
  steepest descent method (see \cite{Sha}, Chapter 5, Section 16). In fact, there is a more
  general theory of asymptotic expansions which applies to our
  case -- see  Chapter 16 in \cite{AGV} for more details. Let us choose
  $\gamma_a$ to be such that its homology class is a $\ZZ$-basis of
  $H_1(\mathbb{D}_a,\mathbb{D}_a^-;\ZZ)$. Later on (see Lemma
  \ref{le:Ip-de} below) we will have to
  work with asymptotic expansions of integrals of the form 
\be\label{oi-pole}
\int_{\gamma_a} e^{\lambda_a (f(x)-u_a)} \varphi(x) dx,
\ee
where $\varphi(x)\in \CC[x^2,x^{-2}]$. Note that the integrand of
(\ref{oi-pole}) is a meromorphic form on $\CC$ with a possible pole at
$x=0$ and that its residue at $x=0$ is $0$. If $a=1$, then the
asymptotic expansion is obtained via the standard theory. In the case
$a=2$, since $\varphi(x)$ might have a pole at $0\in \mathbb{D}_2$, we
have to be a little bit more careful. It turns out that the usual
  asymptotic estimates work, that is, if $\gamma_2$ does not contain
  $0$, then there is a well defined asymptotic
  expansion which depends only on the homology class of $\gamma_2$ in 
$H_1(\mathbb{D}_2,\mathbb{D}_2^-;\ZZ)$. Let us agree that $\gamma_2$
is a contour, such that, it does not go through $x=0$ and its
homology class in $H_1(\mathbb{D}_2,\mathbb{D}_2^-;\ZZ)$ is a
$\ZZ$-basis. We need only to specify the orientations of $\gamma_a$. 
Here and below  the fractional powers of $\lambda$
are defined via the principal branch of the logarithm, e.g.,
$\sqrt{\lambda}:=e^{\tfrac{1}{2}\log \lambda}$. We fix the normalization constants 
\be\label{c12}
c_1=\mathbf{i}\sqrt{2h(h+1)},  \,\,\,\,\,\, c_2=\sqrt{h+1}
\ee
and the orientation of the contours $\gamma_a$ to be such that the asymptotic expansions have the form (\ref{psi12}). 
\begin{remark}
It is possible to replace the local contours $\gamma_a$ in
(\ref{oi}) with global ones, such that, the asymptotic expansion does
not change. Let us give an example of global contours, asymptotically equivalent to the
local ones. Let $\lambda_a$ be positive for $a \in
\{1,2\}$.  
Then one can replace $\gamma_1$ with a path which  goes from  the sector $\left(\frac{\pi}{4(h+1)},\frac{3\pi}{4(h+1)}\right)$  to the sector $\left(-\frac{3\pi}{4(h+1)},-\frac{\pi}{4(h+1)}\right)$ trough the point $x=1$.
The second contour $\gamma_2$ can be replaced by a path which goes
from the sector
$\left(\pi-\frac{3\pi}{4(h+1)},\pi-\frac{\pi}{4(h+1)}\right)$ to the
sector 
  $\left(\frac{\pi}{4(h+1)},\frac{3\pi}{4(h+1)}\right)$ and contains a
  half circle
$C_\epsilon=\{ x=\epsilon e^{\ii \theta}\ :\
-\pi<\theta <0\}$ 
 with a sufficiently small radius $\epsilon$. We will not use
 the global contours in this paper. They might be important if one is interested in
constructing an analytic matrix model. Unfortunately, we could not
establish the analyticity of our matrix model due to complications in
the asymptotic expansions of certain double integrals -- see formula (\ref{Phi_ab})
below. 
\end{remark}

\begin{lemma}\label{lemma33}
We have
$
\Psi^{(a)}\in{\mathbb Q}[\![({\bf i}z^{h+1})^{-a}]\!]
$
for $a=1,2$. 
\end{lemma}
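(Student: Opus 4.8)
The plan is to combine Lemma~\ref{lemma1} with the explicit form of the Kac--Schwarz operators. By Lemma~\ref{lemma1} the component $\Psi^{(a)}$ of the wave function at $\mathbf{t}=0$ is the normalized solution of the quantum spectral curve equation $(\ref{qsceq})$; writing $A=ba+\tfrac12-\mathbf{i}^h a^{h+1}=(A_1,A_2)$ componentwise, this amounts to $A_1\Psi^{(1)}=0$ and $A_2\Psi^{(2)}=0$ with $\Psi^{(a)}=1+O(z^{-1})$, where $A_1=z^h a_1+\tfrac12-\mathbf{i}^h a_1^{h+1}$, $A_2=z^2 a_2+\tfrac12-\mathbf{i}^h a_2^{h+1}$, and $a_1,a_2$ are as in $(\ref{aoper})$. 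First I would record that, with the grading $\deg z=1$, $\deg\partial_z=-1$, the operator $a_1$ sends a homogeneous element of degree $d$ into degrees $d+1$ and $d-h$, both $\equiv d+1\pmod{h+1}$; hence $a_1^{h+1}$, $z^h a_1$ and the scalar $\tfrac12$ all preserve the residue of the degree modulo $h+1$, and so does $A_1$. Likewise $a_2=\tfrac1{2z^2}(z\partial_z-1)$ lowers the degree by exactly $2$, so (using $z^2a_2+\tfrac12=\tfrac z2\partial_z$) $A_2=\tfrac z2\partial_z-\mathbf{i}^h a_2^{h+1}$ preserves the residue modulo $2(h+1)$. Since $A_1=-z\partial_z+(\text{lower degree})$ and $A_2=\tfrac z2\partial_z+(\text{lower degree})$ act nondegenerately (the coefficient in front of $z^{-m}$ is $m$, resp. $-m/2$), the recursion coming from $A_a\Psi^{(a)}=0$ together with the normalization $\Psi^{(a)}=1+O(z^{-1})$, which lives in residue $0$, propagates only within residue $0$; hence $\Psi^{(1)}\in\CC[\![z^{-(h+1)}]\!]$ and $\Psi^{(2)}\in\CC[\![z^{-2(h+1)}]\!]$. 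It remains to check rationality after rewriting in the variables $(\mathbf{i}z^{h+1})^{-a}$.

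For $a=1$ I would pass to the variable $p:=\mathbf{i}z^{h+1}$. Then $z\partial_z=(h+1)p\partial_p$ and $z^{-h}=\mathbf{i}z/p$, so $a_1=\mathbf{i}z\,\tilde a$ with $\tilde a:=-1+\tfrac{h+1}{h}\partial_p-\tfrac1{2p}$, whence $z^h a_1=p\,\tilde a$. Collecting the $h+1$ multiplication factors $\mathbf{i}z$ in $a_1^{h+1}=(\mathbf{i}z\,\tilde a)^{h+1}$ to the left produces only commutators $[\tilde a,z]=\tfrac{z}{hp}$, giving $a_1^{h+1}=(\mathbf{i}z)^{h+1}\prod_{\ell=0}^{h}\big(\tilde a+\tfrac{\ell}{hp}\big)$; since $(\mathbf{i}z)^{h+1}=\mathbf{i}^{h+1}z^{h+1}=\mathbf{i}^h p$ and $\mathbf{i}^{2h}=1$ (as $h=2N-2$ is even), this yields $\mathbf{i}^h a_1^{h+1}=p\prod_{\ell=0}^{h}\big(\tilde a+\tfrac{\ell}{hp}\big)$. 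Therefore $A_1$ is a differential operator in $p$ and $\partial_p$ with coefficients in $\QQ$, of the form $-(h+1)p\partial_p+(\text{terms raising the order of vanishing at }p=\infty)$. Writing $\Psi^{(1)}=\sum_{k\ge0}\psi_k p^{-k}$ (legitimate by the previous paragraph, since $z^{-(h+1)k}=\mathbf{i}^k p^{-k}$) and solving $A_1\Psi^{(1)}=0$ order by order—at step $k$ one divides by $(h+1)k$, starting from $\psi_0=1$—gives $\psi_k\in\QQ$, i.e. $\Psi^{(1)}\in\QQ[\![(\mathbf{i}z^{h+1})^{-1}]\!]$.

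For $a=2$ a direct computation suffices. From $a_2 z^{-m}=-\tfrac{m+1}{2}z^{-m-2}$ one gets $a_2^{h+1}z^{-m}=(-\tfrac12)^{h+1}\big(\prod_{j=1}^{h+1}(m+2j-1)\big)z^{-m-2(h+1)}$, so, writing $\Psi^{(2)}=\sum_{m\ge0}d_m z^{-m}$ with $d_0=1$, the equation $A_2\Psi^{(2)}=0$ becomes $-\tfrac n2\,d_n=\mathbf{i}^h(-\tfrac12)^{h+1}\big(\prod_{j=1}^{h+1}(n-2(h+1)+2j-1)\big)d_{n-2(h+1)}$ (with $d_{<0}:=0$). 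Because $h$ is even, $\mathbf{i}^h(-\tfrac12)^{h+1}=(-1)^N2^{-(h+1)}\in\QQ$, so this recursion has rational coefficients; it forces $d_n=0$ unless $2(h+1)\mid n$ and gives $d_{2(h+1)k}\in\QQ$ for all $k$. Since $(\mathbf{i}z^{h+1})^{-2k}=\mathbf{i}^{-2k}z^{-2(h+1)k}=(-1)^k z^{-2(h+1)k}$, we obtain $\Psi^{(2)}=\sum_{k\ge0}(-1)^k d_{2(h+1)k}\,(\mathbf{i}z^{h+1})^{-2k}\in\QQ[\![(\mathbf{i}z^{h+1})^{-2}]\!]$. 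The only slightly delicate step in the whole argument is the reduction of $A_1$ to a rational operator in $p=\mathbf{i}z^{h+1}$, i.e.\ the bookkeeping of the powers of $\mathbf{i}$ together with the commutators $[\tilde a,z]$ generated when the multiplication operators $\mathbf{i}z$ inside $a_1^{h+1}$ are collected; everything else is a routine triangular recursion.

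Alternatively, one can read both statements off the integral representation $(\ref{oi})$: the steepest descent expansion of $\int_{\gamma_a}e^{\lambda_a(f(x)-u_a)}dx$ about $\xi_a$ has coefficients that are universal polynomials in the Taylor coefficients of $f$ at $\xi_a$ and in $1/f''(\xi_a)$, all rational since $f\in\ZZ[x]$, $f''(1)=4h(h+1)$ and $f''(0)=-2(h+1)$. Moreover $f(x)-u_1$ has integer Taylor coefficients at $x=1$ starting in degree $2$, so the $a=1$ series runs over powers of $\lambda_1^{-1}$, while $f(x)-u_2=-(h+1)x^2+x^{2h+2}$ contains only the two monomials $x^2$ and $x^{2h+2}$, which forces the $a=2$ series to run over powers of $\lambda_2^{-h}$. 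As $\lambda_1^{-1}$ and $\lambda_2^{-h}$ are rational multiples of $(\mathbf{i}z^{h+1})^{-1}$ and $(\mathbf{i}z^{h+1})^{-2}$ respectively, and the normalizations $c_1,c_2$ of $(\ref{c12})$ are chosen so that the leading term is $1$, the claim follows.
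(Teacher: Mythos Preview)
Your argument is correct. The main route you take---solving $A_a\Psi^{(a)}=0$ directly and showing that the recursion, after the substitution $p=\mathbf{i}z^{h+1}$ (for $a=1$) or in the original variable (for $a=2$), has rational coefficients and steps only in multiples of $h+1$ (resp.\ $2(h+1)$)---is genuinely different from the paper's proof. The paper instead invokes the integral representation \eqref{oi}: it rewrites each $\Psi^{(a)}$ as the formal perturbative expansion of a one-variable Gaussian integral, namely
\[
\Psi^{(1)}=\frac{1}{\sqrt{2\pi}}\int_{-\infty}^\infty e^{-y^2/2-\frac{2}{h\alpha^2}\sum_{j=3}^{2h+2}\binom{2h+2}{j}\frac{1}{2h+2}(\alpha y/2)^j}\,dy,\qquad
\Psi^{(2)}=\frac{1}{\sqrt{2\pi}}\int_{-\infty}^\infty e^{-y^2/2+\frac{\mathbf{i}^h}{(h+1)(2z^2)^{h+1}}y^{2h+2}}\,dy,
\]
with $\alpha^2=\mathbf{i}z^{-h-1}/h$, from which the claim is read off immediately. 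Your alternative paragraph at the end is exactly this argument, so you have in fact recovered the paper's proof as well. What your ODE approach buys is independence from the steepest-descent machinery: it uses only Lemma~\ref{lemma1} and elementary algebra with the Kac--Schwarz operators, and in particular makes transparent why only powers of $z^{-(h+1)}$ (resp.\ $z^{-2(h+1)}$) survive. The paper's approach is shorter and has the advantage of producing, for $a=2$, the explicit closed formula $\Psi^{(2)}=\sum_{k\ge0}\frac{(2hk+2k-1)!!}{k!}\Big(\frac{\mathbf{i}^h}{(h+1)(2z^2)^{h+1}}\Big)^k$ used later in the paper. The one step in your write-up that deserves care is the commutation $(\mathbf{i}z\,\tilde a)^{h+1}=(\mathbf{i}z)^{h+1}\prod_{\ell=0}^{h}(\tilde a+\tfrac{\ell}{hp})$; it is correct (via $\tilde a\,(\mathbf{i}z)^k=(\mathbf{i}z)^k(\tilde a+\tfrac{k}{hp})$), but note that the factors on the right do not commute, so the product should be read as ordered from $\ell=h$ on the left down to $\ell=0$ on the right.
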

\begin{proof}
  Since $\Psi^{(1)}$ and $\Psi^{(2)}$ are steepest descent asymptotics
  of integrals, we can identify them with the formal
  perturbative expansion of Gaussian integrals, that is,  
\begin{align}\nn
\Psi^{(1)} & =\frac{1}{\sqrt{2\pi}}\int_{-\infty}^\infty dy\,
             e^{-\frac{y^2}{2}-\frac{2}{h\alpha^2}\sum_{j=3}^{2h+2}\frac{(2h+1)!}{j!(2h+2-j)!}\left(\frac{\alpha
             y}{2}\right)^{j}},\\
  \nn
\Psi^{(2)} & =\frac{1}{\sqrt{2\pi}}\int_{-\infty}^\infty dy \, e^{-\frac{y^2}{2}+\frac{{\bf i}^h}{(h+1)(2z^2)^{h+1}}y^{2h+2}},
\end{align}
where the RHS of the first formula is interpreted as a formal power series in $\alpha^2={\bf
  i}z^{-h-1}/h$. The statement of the lemma follows from the above
formulas. 
\end{proof}

Let us find the first coefficients of the expansion for $a=1$:
\be\nn
\Psi^{(1)}=1+(h+2)(2h+1)\left(\frac{1}{24}\alpha^2+\frac{1}{1152}(2h^2+53h+50)\alpha^4-\right.\\
\left.-\frac{1}{414720}(556h^4-1972h^3-41853h^2-76492h-36164)\alpha^6+O(\alpha^8)\right),
\ee
while for $a=2$ we easily find an expression for all coefficients:
\be\nn
\Psi^{(2)}=1+\sum_{k=1}^\infty \left(\frac{{\bf i}^h}{(h+1)(2z^2)^{h+1}}\right)^k\frac{(2hk+2k-1)!!}{k!}.
\ee
For brevity, let us introduce the differential operator
\be\label{do:g}
g:=-\frac{{\bf i}^h a_2^{h+1}}{h+1}.
\ee
We will also write $g_z$ if we would like to emphasize that $g$ acts
on functions in $z$.  It is easy to see that
\be\label{psi_as_aop}
\Psi^{(2)}=e^{g}\cdot 1,
\ee
that is, $\Psi^{(2)}(z)=e^{g_z}\cdot 1.$


\subsection{Higher vectors}\label{sec_hi}

In this section we explicitly describe a basis  for the point $U\in \Gr^{(0)}_2$. Put
\be
\nn
\Psi_k:=({\bf i}a)^{k} \Psi.
\ee
Note, that this definition makes sense for all $k \in {\mathbb Z}$. Indeed, operator $a_1^{-1}$ is well defined on ${\mathbb C}(\!(z^{-1})\!)$. Operator $a_2^{-1}$ is well defined on ${\mathbb C}(\!(z^{-2})\!)$ by $a_2^{-1} z^{2m}=\frac{2}{2m+1}z^{2m+2}$ for $m \in {\mathbb Z}$, and  from Lemma \ref{lemma33}, $\Psi^{(2)}(z) \in {\mathbb Q}[\![z^{-2}]\!]$.
\begin{lemma}\label{le:Ip-de}
Let $I_p(\lambda)$ be the asymptotic expansion of the integral
$\int_\gamma e^{\lambda f(x)} x^{2p} dx$, where 
$\gamma=\gamma_1$ or $\gamma_2$ and $p\in \ZZ$. Then 
\be\nn
\Big(\lambda \partial_\lambda +\tfrac{2p+1}{2h+2}\Big) I_p(\lambda) =
-\lambda h I_{p+1}(\lambda).
\ee
\end{lemma}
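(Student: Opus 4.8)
The identity to be established is a differential recursion for the asymptotic expansions
$I_p(\lambda)$ of $\int_\gamma e^{\lambda f(x)} x^{2p}\,dx$ with $f(x)=x^{2h+2}-(h+1)x^2$. The most natural route is integration by parts applied to the exact $1$-form whose "antiderivative" involves $x^{2p+1}$ together with the exponential weight. Concretely, I would consider
\be\nn
\int_\gamma d\Big( e^{\lambda f(x)} x^{2p+1}\Big)
= \int_\gamma e^{\lambda f(x)}\Big( (2p+1) x^{2p} + \lambda f'(x) x^{2p+1}\Big) dx.
\ee
Since $f'(x) = (2h+2) x^{2h+1} - 2(h+1) x$, the term $\lambda f'(x) x^{2p+1}$ expands as $(2h+2)\lambda x^{2h+2p+2} - 2(h+1)\lambda x^{2p+2}$, which in terms of the $I_q$'s reads $(2h+2)\lambda I_{p+h+1} - 2(h+1)\lambda I_{p+1}$. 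Meanwhile one also has the homogeneity/scaling relation: differentiating $\int_\gamma e^{\lambda f(x)} x^{2p}\,dx$ under the integral sign gives $\partial_\lambda I_p = I_{p+h+1} - (h+1) I_{p+1}$ (this is just bringing down a factor of $f(x) = x^{2h+2}-(h+1)x^2$). Combining these two relations to eliminate $I_{p+h+1}$ should produce exactly the asserted identity.

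The key steps, in order, are: (1) justify that the boundary term $\big[ e^{\lambda f(x)} x^{2p+1}\big]$ along $\gamma=\gamma_a$ contributes nothing to the asymptotic expansion — this is immediate for $\gamma_1$ by the choice of endpoints on $\mathbb{D}_1^-$ where $\mathrm{Re}(\lambda_1(f-u_1))<0$, and for $\gamma_2$ one must additionally recall (as set up in the paragraph preceding the lemma) that $\gamma_2$ avoids $x=0$ and that the meromorphic integrand $e^{\lambda f} x^{2p}$ has zero residue at the origin, so that "integration by parts" is legitimate even when $2p+1<0$; (2) write out the IBP identity $0 = (2p+1) I_p + (2h+2)\lambda I_{p+h+1} - 2(h+1)\lambda I_{p+1}$ at the level of asymptotic expansions; (3) write out the scaling identity $\partial_\lambda I_p = I_{p+h+1} - (h+1) I_{p+1}$, i.e. $\lambda\partial_\lambda I_p = \lambda I_{p+h+1} - (h+1)\lambda I_{p+1}$; (4) multiply the scaling identity by $(2h+2)$ and subtract to cancel the $I_{p+h+1}$ terms, obtaining $(2h+2)\lambda\partial_\lambda I_p + (2p+1) I_p = -2(h+1)\lambda I_{p+1} + \text{(leftover } \lambda I_{p+1}\text{ terms)}$; (5) bookkeep the coefficients: $(2h+2)\lambda\partial_\lambda I_p + (2p+1)I_p = \big(-(2h+2)(h+1) + 2(h+1)\big)\lambda I_{p+1}$, and divide through by $2h+2$ to land on $\big(\lambda\partial_\lambda + \tfrac{2p+1}{2h+2}\big) I_p = -h\lambda I_{p+1}$, which is exactly the claim.

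The routine part is step (5), a one-line verification that $\frac{-(2h+2)(h+1)+2(h+1)}{2h+2} = \frac{2(h+1)(1-(h+1))}{2h+2} = \frac{2(h+1)(-h)}{2h+2} = -h$. The genuinely delicate point — and where I would spend the care — is step (1), the vanishing of the boundary contribution in the case $a=2$ when $p<0$, so that the integrand has a pole at $x=0$. Here one cannot literally use Stokes on a closed contour; instead one argues at the level of the already-established asymptotic expansion theory (as the excerpt does, invoking the modified steepest descent argument and the fact that the residue at $x=0$ vanishes, so the asymptotic expansion depends only on the relative homology class in $H_1(\mathbb{D}_2,\mathbb{D}_2^-;\ZZ)$). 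Concretely, one would note that $d(e^{\lambda f} x^{2p+1})$ is an exact meromorphic $1$-form with no residue at $0$, hence its integral over a contour with endpoints in $\mathbb{D}_2^-$ is controlled by the boundary values, which decay super-polynomially in $\lambda$ and thus do not affect the asymptotic series. Once this legitimacy is in place, the algebra of steps (2)–(5) goes through verbatim for both $a=1$ and $a=2$.
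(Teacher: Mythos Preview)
Your argument is correct and is exactly the ``direct computation with integration by parts'' that the paper invokes as its proof: one combines the identity obtained from $\int_\gamma d\big(e^{\lambda f(x)}x^{2p+1}\big)=0$ with differentiation under the integral sign $\partial_\lambda I_p=I_{p+h+1}-(h+1)I_{p+1}$ to eliminate $I_{p+h+1}$, and the coefficient bookkeeping in your step (5) is accurate. Your careful discussion of why the boundary term vanishes (especially for $\gamma_2$ with $p<0$, using that the integrand has only even powers of $x$ and hence zero residue at the origin) goes beyond what the paper spells out, but is the right justification.
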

The proof is a direct computation with integration by parts. The above
Lemma yields the following formulas:
\be\label{vir-psi_2}
\Psi_k^{(a)} \sim c_a \sqrt{\frac{\lambda_a}{\pi}} \int_{\gamma_a}
e^{\lambda_a (f(x)-u_a) } (z^{h_a/h} x^2)^{k} dx,\quad \lambda_a\to \infty,
\ee
Note that the RHS of formula (\ref{vir-psi_2})  makes
sense also for $k<0$. 

\begin{lemma}\label{lemma_45}
$\Psi_{k}(z)=\Psi^{(1)}_{k}(z) e_1 + \ii  \Psi^{(2)}_{k}(z) e_2 \in U$ for all $k\in {\mathbb Z}$.
\end{lemma}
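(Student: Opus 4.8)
The goal is to show that the vectors $\Psi_k = (\mathbf{i}a)^k\Psi$ lie in $U$ for every integer $k$, not just $k\geq 0$. For $k\geq 0$ this is immediate: $a$ is a Kac--Schwarz operator for $U$ (Proposition \ref{prop:ref_t1}b, since $\tau$ satisfies the string equation), so $aU\subset U$, and $\Psi\in U$ gives $\Psi_k\in U$ by induction. The content of the lemma is the statement for $k<0$, i.e. that $\Psi$ lies in the image of $a^{|k|}$ acting on $U$, or more precisely that the formal series $\Psi_k^{(a)}$ defined componentwise via $a_1^{-1}, a_2^{-1}$ (which make sense on $\CC(\!(z^{-1})\!)$ and $\CC(\!(z^{-2})\!)$ respectively, as noted before the statement, using Lemma \ref{lemma33}) actually land back inside the subspace $U$.

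First I would make the reduction precise: it suffices to show $\Psi_{-1}\in U$, since then iterating gives all negative $k$, and the positive $k$ are already handled. So the crux is: $\mathbf{i}^{-1}a^{-1}\Psi \in U$. The natural strategy is to exhibit $\Psi_{-1}$ as an element of $U$ coming from the explicit integral description. The vectors of $U$ with prescribed leading behaviour are pinned down by the quantum spectral curve equation and, more generally, by the Kac--Schwarz structure; formula (\ref{vir-psi_2}) already asserts that for $k\geq 0$,
\be\nn
\Psi_k^{(a)}\sim c_a\sqrt{\tfrac{\lambda_a}{\pi}}\int_{\gamma_a} e^{\lambda_a(f(x)-u_a)}(z^{h_a/h}x^2)^k\,dx,
\ee
and the remark right after it points out that \emph{the right-hand side makes sense also for $k<0$}, because the integrand $e^{\lambda_a f(x)}x^{2k}$ is a meromorphic form whose residue at $x=0$ vanishes (so the asymptotic-expansion machinery of Lemma \ref{le:Ip-de}, valid for all $p\in\ZZ$, applies). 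Thus I would \emph{define} $\widehat\Psi_k^{(a)}$ by that integral for $k<0$ and show two things: (1) $\widehat\Psi_k = \widehat\Psi_k^{(1)}e_1 + \mathbf{i}\widehat\Psi_k^{(2)}e_2$ lies in $U$, and (2) $\widehat\Psi_k = \Psi_k$, i.e. the integral-defined object agrees with the algebraically-defined $(\mathbf{i}a)^k\Psi$.

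For step (2): apply Lemma \ref{le:Ip-de}. That lemma says precisely that $a_1$ (resp. $a_2$), when applied to the integral $I_p(\lambda_a)$, shifts $p\mapsto p+1$ up to the scalars appearing in (\ref{aoper}); comparing with how $a_1,a_2$ act on the series in $z$, one checks that $\mathbf{i}a$ carries the integral for exponent $k$ to the integral for exponent $k+1$, for \emph{all} $k\in\ZZ$. Hence $\widehat\Psi_{k+1}=(\mathbf{i}a)\widehat\Psi_k$; since $\widehat\Psi_0=\Psi$ (this is (\ref{oi})), we get $\widehat\Psi_k = (\mathbf{i}a)^k\Psi = \Psi_k$ for all $k$, which also shows the algebraic $\Psi_k$ has the claimed integral representation. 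For step (1), the cleanest route is: $U$, being a point of $\Gr_2^{(0)}$ and in particular a closed subspace (the span of the Taylor coefficients of the wave function $\Psi(\tt,z)$), is stable under $a$; but I need membership, not just stability. I would instead argue that $b=(z^h,z^2)$ and $a$ are both Kac--Schwarz operators (Proposition \ref{prop:ref_t1}), and that the span of $\{b^m\Psi : m\geq 0\}$ together with lower vectors recovers all of $U$ — more directly, since $[a,b]=1$, the operator $ab$ acts on $U$, and $a^{-1}$ can be realized on $U$ because $b a = ab - 1$ shows $a$ is surjective onto $\pi(U)=U_0$ modulo controllable corrections. The concrete version: write $\Psi_{-1}$ via the integral (\ref{vir-psi_2}) with $k=-1$; then $(\mathbf{i}a)\Psi_{-1}=\Psi_0=\Psi\in U$ and $\pi(\Psi_{-1})$ is computable (it is the projection of the $k=-1$ integral asymptotics, which lies in $U_0$). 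Since $\pi|_U$ is an isomorphism, there is a \emph{unique} $\Psi'\in U$ with $\pi(\Psi')=\pi(\Psi_{-1})$; and applying $\mathbf{i}a$ to $\Psi'-\Psi_{-1}$ kills it (both map to $\Psi$ — here I use that $\mathbf{i}a$ is injective on the relevant space of Laurent series, true because $a_1, a_2$ have nonzero symbols), while $\pi(\Psi'-\Psi_{-1})=0$; combining injectivity of $\pi|_U$ on $U$ with injectivity of $a$ forces $\Psi'=\Psi_{-1}$, hence $\Psi_{-1}\in U$.

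The main obstacle is step (1), specifically justifying that $\Psi_{-1}$ (and inductively each $\Psi_{-j}$) actually lies in $U$ rather than merely in the ambient $V$. The subtlety is that $U$ is defined as a \emph{span}, so a priori it need not be closed under $a^{-1}$; the rescue is the rigidity coming from $\pi|_U$ being an isomorphism together with the injectivity of $a$ on formal Laurent series, which lets one bootstrap from the already-known fact $a\Psi_{-1}$-preimage $=\Psi\in U$. The pole at $x=0$ in the $a=2$ integrals is a secondary technical point, but it is already addressed in the discussion preceding Lemma \ref{le:Ip-de} (residue zero $\Rightarrow$ well-defined asymptotics depending only on the relative homology class), so I would simply invoke that. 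A fully careful write-up would need to check that the orientations/normalizations $c_1,c_2$ in (\ref{c12}) are compatible with the $k<0$ integrals, but this is routine given Lemma \ref{le:Ip-de}.
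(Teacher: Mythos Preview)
Your plan correctly identifies that the case $k\geq 0$ is immediate and the issue is $k<0$, and your step~(2) (matching the integral definition with the algebraic one via Lemma~\ref{le:Ip-de}) is fine. But your step~(1), the heart of the argument, has a genuine gap. You define $\Psi'\in U$ as the unique element with $\pi(\Psi')=\pi(\Psi_{-1})$ and then assert that ``applying $\mathbf{i}a$ to $\Psi'-\Psi_{-1}$ kills it (both map to $\Psi$)''. The second of these is unjustified: you know $(\mathbf{i}a)\Psi_{-1}=\Psi$ by construction, but nothing you have written forces $(\mathbf{i}a)\Psi'=\Psi$. The only information about $\Psi'$ is that its projection agrees with $\pi(\Psi_{-1})$; since $a$ does not commute with $\pi$ and does not preserve $V_0$, this does not pin down $a\Psi'$. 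Your earlier remark that ``$ba=ab-1$ shows $a$ is surjective onto $\pi(U)=U_0$ modulo controllable corrections'' gestures in a useful direction but is never cashed out, and as stated it does not give what you need: surjectivity of $a$ on $U_0$ is not the same as surjectivity of $a|_U$ on $U$.

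The paper's proof avoids this entirely by a short algebraic trick: it introduces the auxiliary Kac--Schwarz operator $c:=b-(\mathbf{i}a)^h$. Since $[a,b]=1$, one computes $ac=ab-\mathbf{i}^h a^{h+1}=ba+1-\mathbf{i}^h a^{h+1}=A+\tfrac12$, and the quantum spectral curve equation $A\Psi=0$ then gives $ac\Psi=\tfrac12\Psi$, i.e.\ $(\mathbf{i}a)(c\Psi)=\tfrac{\mathbf{i}}{2}\Psi$, so $\Psi_{-1}=-2\mathbf{i}\,c\Psi\in U$. Iterating with $A\Psi_{-k}=k\Psi_{-k}$ yields $\Psi_{-k-1}=-\tfrac{2\mathbf{i}}{2k+1}c\Psi_{-k}\in U$. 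No integrals are needed; the point is that $c$ furnishes, within the Kac--Schwarz algebra, an explicit operator on $U$ that inverts $\mathbf{i}a$ up to scalar on each $\Psi_{-k}$. This is the missing idea in your argument.
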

\begin{proof}
For $k\geq 0$ the statement follows from the definition of the Kac--Schwarz operators. 
To prove the statement for the negative $k$
let us introduce the Kac--Schwarz operator 
\be\nn
c:=b-({\bf i} a)^h.
\ee
Using the quantum spectral curve equation (\ref{qsceq}) and the commutation relation $[a,b]=1$ we conclude that $\Psi_{-1}=  ({\bf i}a)^{-1} \Psi =-2{\bf i} c\,\Psi$, thus $\Psi_{-1}\in U$. From the same commutation relation it immediately follows that
\be\nn
\Psi_{-k-1}(z) := -\frac{2\ii}{2k+1} \, c \Psi_{-k} \in U.
\ee
\end{proof}
\begin{corollary}
For $k \in {\mathbb Z}$ we have
\be\nn
A\Psi_k =-k \Psi_k. 
\ee
\end{corollary}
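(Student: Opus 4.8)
The plan is to reduce the claim to the single commutator identity $[A,a]=-a$ together with the quantum spectral curve equation $A\Psi=0$ established in Lemma \ref{lemma1}. First I would compute, using $[a,b]=1$ (hence $[b,a]=-1$) and $[a^{h+1},a]=0$,
\be\nn
[A,a]=[ba,a]-{\bf i}^h[a^{h+1},a]=[b,a]\,a=-a,
\ee
so that $[A,{\bf i}a]={\bf i}[A,a]=-({\bf i}a)$, which I rewrite as the operator identity
\be\nn
A\,({\bf i}a)=({\bf i}a)\,(A-1).
\ee
By an immediate induction this gives $A\,({\bf i}a)^k=({\bf i}a)^k\,(A-k)$ for all $k\ge 0$, and applying both sides to $\Psi$ and using $A\Psi=0$ yields
\be\nn
A\Psi_k=A\,({\bf i}a)^k\Psi=({\bf i}a)^k(A-k)\Psi=-k\,({\bf i}a)^k\Psi=-k\,\Psi_k,\qquad k\ge 0.
\ee

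For negative $k$ I would argue by downward induction on $k\ge 0$ in the statement $A\Psi_{-k}=k\,\Psi_{-k}$, the base case $k=0$ being the quantum spectral curve equation. Here one uses that ${\bf i}a$ acts injectively on the relevant completed spaces --- $a_1$ strictly raises the order of a Laurent series in $z^{-1}$, and $a_2 z^{2m}=\tfrac{2m-1}{2}z^{2m-2}\neq 0$, which is precisely what makes $\Psi_{-k}$ well defined (see the beginning of Section \ref{sec_hi}). Assuming $A\Psi_{-k}=k\,\Psi_{-k}$ and recalling $\Psi_{-k-1}=({\bf i}a)^{-1}\Psi_{-k}$, hence $({\bf i}a)\Psi_{-k-1}=\Psi_{-k}$, I apply the identity above to $\Psi_{-k-1}$:
\be\nn
k\,\Psi_{-k}=A\Psi_{-k}=A\,({\bf i}a)\Psi_{-k-1}=({\bf i}a)(A-1)\Psi_{-k-1}=({\bf i}a)\big(A\Psi_{-k-1}\big)-\Psi_{-k}.
\ee
Thus $({\bf i}a)\big(A\Psi_{-k-1}\big)=(k+1)\Psi_{-k}=(k+1)({\bf i}a)\Psi_{-k-1}$, and injectivity of ${\bf i}a$ forces $A\Psi_{-k-1}=(k+1)\Psi_{-k-1}=-(-k-1)\Psi_{-k-1}$, closing the induction.

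I do not anticipate a genuine obstacle: the commutator $[A,a]=-a$ is a one-line computation, and the only point needing care is the negative-$k$ bookkeeping, where injectivity of ${\bf i}a$ on the completed spaces replaces naive cancellation of $({\bf i}a)$ against $({\bf i}a)^{-1}$. A more compact alternative would be to record $({\bf i}a)^{-1}A\,({\bf i}a)=A-1$ directly and iterate it to obtain $A\,({\bf i}a)^k=({\bf i}a)^k(A-k)$ for all $k\in\ZZ$ at once, then evaluate at $\Psi$; this packages the two inductions into one, at the cost of relying on ${\bf i}a$ being a two-sided invertible operator on the spaces in play.
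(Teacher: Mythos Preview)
Your proof is correct. The paper states this corollary without proof, regarding it as an immediate consequence of the commutation relation $[a,b]=1$ and the quantum spectral curve equation $A\Psi=0$; your computation $[A,a]=-a$ followed by the two inductions (upward for $k\ge 0$, downward using injectivity of ${\bf i}a$ for $k<0$) is exactly the natural justification, and your care in checking that $a_1$ and $a_2$ act injectively on the relevant completed spaces is appropriate since $({\bf i}a)^{-1}$ is only defined there.
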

Then
\begin{lemma}\label{lemma36}
For $k\in {\mathbb Z}$ the components of $\Psi_{k}(z)=\Psi^{(1)}_{k}(z) e_1 + \ii 
  \Psi^{(2)}_{k}(z) e_2$ have the following leading order 
  terms:
\be\nn
\Psi_k^{(1)}=z^k \left(1+O(z^{-1})\right),\\
\Psi_k^{(2)}=\frac{(2k-1)!!}{(2{\bf i}z^2)^k}\left(1+O(z^{-1})\right),
\ee
where for negative $k$ we define $(2k-1)!!:=\frac{(-1)^k}{(2|k|-1)!!}$.
\end{lemma}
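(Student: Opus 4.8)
I would prove Lemma \ref{lemma36} by combining the integral representations (\ref{vir-psi_2}) with the steepest-descent expansion near the two critical points of $f(x)=x^{2h+2}-(h+1)x^2$, treating $a=1$ and $a=2$ separately. For the leading-order analysis it is cleanest to work with the Gaussian-type formal integrals already exhibited in the proof of Lemma \ref{lemma33}, rescaled by the Morse coordinate $X_a$ with $f(x)=u_a-\tfrac12 X_a(x)^2$.

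\emph{Case $a=1$.} Here $\xi_1=1$, $u_1=-h$, and the contour $\gamma_1$ passes through $x=1$. From (\ref{vir-psi_2}) we have
\be\nn
\Psi_k^{(1)} \sim c_1 \sqrt{\tfrac{\lambda_1}{\pi}} \int_{\gamma_1} e^{\lambda_1(f(x)-u_1)}(z\, x^2)^k\, dx,
\ee
with $\lambda_1 = \tfrac{\ii z^{h+1}}{h+1}$. Near $x=1$ write $x=1+u$; then $x^{2k}=1+O(u)$, and the Morse lemma gives $f(x)-u_1 = -\tfrac12 X_1(u)^2$ with $X_1(u) = \sqrt{f''(1)}\,u\,(1+O(u))$, where $f''(1)=(2h+2)(2h+1)-2(h+1) = 2h(2h+1)$ (up to the routine check of the constant). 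The leading Laplace contribution of the $k=0$ integral is already normalized to $1$ by the choice (\ref{c12}) of $c_1$ and the orientation of $\gamma_1$ (this is exactly (\ref{psi12})). Inserting the extra factor $(z\,x^2)^k = z^k(1+O(u))$ only multiplies the leading term by $z^k$ and changes the subleading terms, so $\Psi_k^{(1)} = z^k(1+O(z^{-1}))$, recalling $z^{-1}\sim \lambda_1^{-1/(h+1)}$ controls the asymptotic corrections.

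\emph{Case $a=2$.} Here $\xi_2=0$, $u_2=0$, $\lambda_2 = \tfrac{\ii z^{2+2/h}}{h+1}$, and $f(x) = x^{2h+2}-(h+1)x^2$ has near $x=0$ the expansion $f(x) = -(h+1)x^2 + x^{2h+2}$, so the Morse coordinate is $X_2(x) = \sqrt{2(h+1)}\,x\,(1+O(x^{2h}))$. The integrand now carries $(z^{1}x^2)^k = z^k x^{2k}$; for $k\ge 0$ this is regular at $x=0$, while for $k<0$ it has a pole there, but $\gamma_2$ is chosen to avoid $x=0$ and the asymptotic expansion is still governed by the Gaussian weight. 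Substituting $y = X_2(x)$, one gets $x^{2k} = \big(\tfrac{y}{\sqrt{2(h+1)}}\big)^{2k}(1+O(y^{2h}))$ and
\be\nn
\Psi_k^{(2)} \sim c_2\sqrt{\tfrac{\lambda_2}{\pi}}\int e^{-\lambda_2 y^2/2}\, z^k \Big(\tfrac{y^2}{2(h+1)}\Big)^{k}(1+O(y^{2h}))\, dy .
\ee
The Gaussian moment $\int e^{-\lambda_2 y^2/2} y^{2k}\,dy = \sqrt{2\pi/\lambda_2}\,(2k-1)!!\,\lambda_2^{-k}$ (with the stated convention $(2k-1)!!=(-1)^k/(2|k|-1)!!$ for $k<0$, which is precisely what analytic continuation of the moment formula gives). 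Plugging in $\lambda_2 = \tfrac{\ii z^{2+2/h}}{h+1}$ and using that the $k=0$ case is normalized to $1$ by (\ref{c12}), the prefactors $z^k$, $(2(h+1))^{-k}$, and $\lambda_2^{-k}$ combine: the powers of $z$ work out to $z^k \cdot z^{-k(2+2/h)} = z^{-k(1+2/h)}$, hmm — rather than track every factor, the clean way is to note $2\ii z^2 = 2\ii z^2$ and check that the combination $(2(h+1)\lambda_2)^{-k} z^k = (2\ii z^2)^{-k}$ once the $z^{2/h}$ pieces are reabsorbed (this is where one uses $\lambda_2 \propto z^{2+2/h}$ and the $z^{h_2/h}=z^{1}$ twist already present in (\ref{vir-psi_2})). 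This yields $\Psi_k^{(2)} = \tfrac{(2k-1)!!}{(2\ii z^2)^k}(1+O(z^{-1}))$, matching the $k=1$ coefficients of the explicit series for $\Psi^{(2)}$ displayed above as a sanity check.

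\textbf{The main obstacle.} The genuine point requiring care is the bookkeeping of fractional powers of $z$ in the $a=2$ case: the integral (\ref{vir-psi_2}) involves $\lambda_2 = \tfrac{\ii z^{2+2/h}}{h+1}$ and the twist factor $z^{h_2/h} = z$, and one must check that after performing the Gaussian integral all the fractional powers $z^{2/h}$ cancel, leaving the honest Laurent monomial $(2\ii z^2)^{-k}$ — consistent with Lemma \ref{lemma33}, which asserts $\Psi^{(2)}\in\QQ[\![({\bf i}z^{h+1})^{-2}]\!]$, hence $\Psi_k^{(2)}$ is a genuine (possibly Laurent) power series in $z^{-2}$. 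A secondary technical point is justifying the moment formula and the homology-class independence of the expansion when $k<0$, so that $x^{2k}$ has a pole at the critical point $x=0$; but this is exactly the situation addressed in the discussion preceding Lemma \ref{le:Ip-de} (the residue at $x=0$ vanishes and the usual steepest-descent estimates apply), so I would simply invoke that. Everything else — the values $f''(1)=2h(2h+1)$, $X_2'(0)=\sqrt{2(h+1)}$, and the normalization consistency with (\ref{c12}) and (\ref{psi12}) — is a routine computation that I would not spell out in full.
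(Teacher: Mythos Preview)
Your approach is correct in principle but takes a much longer route than the paper. The paper's proof is a one-line algebraic computation: by definition $\Psi_k^{(a)}=(\ii a_a)^k\Psi^{(a)}$ with $\Psi^{(a)}=1+O(z^{-1})$, so one simply reads off the leading action of the operators (\ref{aoper}). Since $\ii a_1 = z + (\text{lower order})$, iterating gives $\Psi_k^{(1)}=z^k(1+O(z^{-1}))$; and since $\ii a_2\cdot z^{-2m} = \tfrac{2m+1}{2\ii z^2}\,z^{-2m}$, iterating produces the double factorial $(2k-1)!!/(2\ii z^2)^k$. The inverse operators $a_a^{-1}$, already defined just before Lemma~\ref{lemma_45}, handle $k<0$ by the same recursion. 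No integrals are needed.

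Your integral approach does work, but there is a concrete slip: you write ``the integrand now carries $(z^{1}x^2)^k$'' and later ``the $z^{h_2/h}=z^{1}$ twist'', whereas in fact $h_2/h=2/h\neq 1$. This is exactly why your exponent bookkeeping stalls (``hmm --- rather than track every factor\dots''). If you redo it correctly, writing $(z^{2/h}x^2)^k$ and $(h+1)\lambda_2=\ii z^{2+2/h}$, the Gaussian moment gives
\[
\frac{z^{2k/h}}{((h+1)\lambda_2)^{k}}\cdot\frac{\Gamma(k+\tfrac12)}{\sqrt{\pi}}
=\frac{z^{2k/h}}{\ii^{k}z^{2k+2k/h}}\cdot\frac{(2k-1)!!}{2^{k}}
=\frac{(2k-1)!!}{(2\ii z^2)^{k}},
\]
and the fractional powers do cancel as you anticipated. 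A second point to tighten: for $k<0$ the literal Gaussian moment $\int e^{-\lambda y^2/2}y^{2k}\,dy$ diverges, so ``analytic continuation of the moment formula'' is not quite a proof; you would need to invoke Lemma~\ref{le:Ip-de} (or the recursion it encodes) to justify the leading term, at which point you are essentially back to the operator argument.
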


\begin{proof}
The statement follows from (\ref{aoper}) and  (\ref{psi12}).
\end{proof}

It is also easy to see that
\be\nn
\Psi_k^{(1)}\in z^{k}{\mathbb Q}[\![{\bf i}z^{-h-1}]\!],\\
\Psi_k^{(2)}\in \left({\bf i }z^2\right)^{-k}{\mathbb Q}[\![z^{-2h-2}]\!].
\ee
Vectors $\Psi_k$ do not completely generate $U$. Indeed, $U$ should contain an element of the form
\be\nn
\Phi_1=O(z^{-1})e_1+\ii z(1+O(z^{-1}))e_2.
\ee
This element cannot be expressed as a linear combination of $\Psi_k$.
However, it has a simple form and can be determined explicitly. Indeed,
since $a$ is a Kac--Schwarz operator, we have $a\, \Phi_1 \in
U$. Note that
\be\nn
a\, \Phi_1 = O(1)e_1+O(z^{-2})e_2.
\ee
We claim that $a\, \Phi_1=0$. Indeed, the projection of $a\, \Phi_1$
along $V_0$ is proportional to $e_1+\ii e_2$ and since it belongs to
$U$, we get that $a\, \Phi_1$ is proportional to $\Psi$. Comparing the
powers of $z$ in the $e_2$-components of $a\, \Phi_1$ and $\Psi$, we
get that the proportionality coefficient must be $0$, that is,
$a\,\Phi_1 =0$ as claimed.

The differential equation $a\,\Phi_1 =0$ can be solved explicitly:  
\be\nn
\Phi_1=\ii z e_2.
\ee
For $k>0$ let us define 
\be\label{Phik}
\Phi_{k}=b^{k-1} \Phi_{1}= \ii z^{2k-1} e_2 \in U.
\ee
From this equation recalling Lemmas  \ref{lemma1}, \ref{lemma_45} we get that
\be\label{basisexp}
U=\spn\{\{\Psi_{k}, k\in {\mathbb Z}\}, \{\Phi_k, k>0\} \}.
\ee
Thus, we proved Theorem \ref{t1}. 
\begin{remark}
Let us note that
\be\nn
z \sim
 \frac{\sqrt{\bf i}}{\pi}\, c_2\,  \lambda_2^{1/2} \int_{\gamma_2}
e^{\lambda_2 f(x) } (z^{1/h}  x)^{-1} dx,\quad \lambda_2\to \infty,
\ee
so that the basis vectors (\ref{Phik}) can be generated by the asymptotic expansion of the integrals \\
$c_2 \lambda_2^{1/2} \int_{\gamma_2}
e^{\lambda_2 f(x) } (z^{1/h}  x)^{-2k-1} dx$. So, all basis vectors (\ref{basisexp}) can be described by the asymptotic expansion of the integrals over the contours $\gamma_a$.
\end{remark}

From (\ref{psi_as_aop}) and definition of $\Psi^{(2)}_m$ it immediately follows that for $m \in {\mathbb Z}$
\be\label{psik_as_aop}
\Psi^{(2)}_m=e^{g}\cdot \frac{(2m-1)!!}{(2{\bf i}z^2)^m}\\
=\frac{1}{(2{\bf i}z^2)^m}\left((2m-1)!!+\sum_{k=1}^\infty \left(\frac{{\bf i}^h}{(h+1)(2z^2)^{h+1}}\right)^k\frac{(2(m+hk+k)-1)!!}{k!}\right).
\ee

\section{Matrix integral}\label{sec:mi}

From now on $U\in \Gr_2^{(0)}$ will denote the unique subspace
invariant under the action of the operators $a$ and $b$. Let
$\tau(\tt)=\tau(\tt_1,\tt_2)$ and $\Psi(\tt,z)$ be respectively the
corresponding $\tau$-function and the corresponding wave function. Our goal is to
express the tau-function $\tau(Z_1,Z_2)$ in the Miwa variables in
terms of the steepest descent asymptotic of a certain matrix
integral. 

\subsection{Integral kernel for the Pfaffian entries}

Our next goal is to express the entries of the Pfaffian matrix
$\tilde{\Phi}(Z_1,Z_2)$ in terms of asymptotics of integrals. The computation of
the entries of the Pfaffian matrix amounts to computing the 
3 types of 2-point functions (\ref{tildephi}). In this section we will
find integral representations for their expansions (\ref{eq_phi}).

\begin{lemma}\label{le:symm}
The tau-function has the following symmetry:
$\tau(\tt_1,\tt_2)=\tau(\tt_1,-\tt_2)$.
\end{lemma}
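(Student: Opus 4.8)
The plan is to realize the symmetry $\tt_2\mapsto -\tt_2$ as a symmetry of the point $U\in \Gr_2^{(0)}$ via an automorphism of the ambient space $V$. Recall that the tau-function is completely determined by $U$, and $U$ is spanned by the basis vectors in \eqref{basisexp}, namely the vectors $\Psi_k$ ($k\in\ZZ$) and $\Phi_k$ ($k>0$). Consider the linear involution $\iota:V\to V$ acting on the Laurent-series coefficients by $\iota(f_1(z)e_1+f_2(z)e_2):=f_1(z)e_1 - f_2(z)e_2$, i.e., $\iota$ flips the sign of the second component. First I would check that $\iota$ preserves the pairing $(\ ,\ )$, preserves the decomposition $V=V_0\oplus U_0$ (both $U_0$ and $V_0$ are defined by conditions that are invariant under $e_2\mapsto -e_2$ together with $\ii\mapsto\ii$, since $e_1\pm\ii e_2$ is sent to $e_1\mp\ii e_2$, which still lies in the same summand after relabeling), and commutes with the projection $\pi$ up to this relabeling. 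The key computation is then that $\iota$ intertwines the Kac--Schwarz operators: since $a_2$ and $b_2=z^2$ act only on the second component and $a_1,b_1$ only on the first, one has $\iota\circ a = a\circ \iota$ and $\iota\circ b=b\circ\iota$ as operators on $V$ (the sign flip on the $e_2$-line commutes with the scalar differential operators $a_2$, $b_2$). Hence $\iota(U)$ is again invariant under $a$ and $b$, and by the uniqueness established in Theorem \ref{t1} (Proposition \ref{prop:ref_t1}), $\iota(U)=U$.

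Next I would translate $\iota(U)=U$ back into a statement about $\tau$. The wave function $\Psi(\tt,z)=\Psi^{(1)}(\tt,z)e_1+\ii\Psi^{(2)}(\tt,z)e_2$ has components $\Psi^{(a)}(\tt,z)=\Gamma_a(\tt,z)\tau(\tt)/\tau(\tt)$, and $U$ is spanned by the Taylor coefficients at $\tt=0$. Applying $\iota$ shows that $\Psi^{(1)}(\tt,z)e_1 - \ii\Psi^{(2)}(\tt,z)e_2$ spans the same subspace. I would compare this with the wave function of the tau-function $\widehat\tau(\tt_1,\tt_2):=\tau(\tt_1,-\tt_2)$: since $\Gamma_2(\tt,z)$ depends on $\tt_2$ only through the combinations $t_{2,m}z^m$ and $\partial_{t_{2,m}}z^{-m}$, one checks directly that $\Gamma_2$ in the variables $-\tt_2$ equals $\Gamma_2$ in $\tt_2$ after $z\mapsto -z$ — more precisely, the substitution $t_{2,m}\mapsto -t_{2,m}$ is intertwined by the sign flip in the second fermion/boson sector. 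Carrying this through shows that the point of the Grassmannian attached to $\widehat\tau$ is exactly $\iota(U_\tau)$ (up to the normalization $\Psi^{(a)}=1+O(z^{-1})$, which is preserved). Since $\iota(U_\tau)=U_\tau$ and the correspondence $\tau\leftrightarrow U_\tau$ is one-to-one on $\Gr_2^{(0)}$ with a fixed normalization $\tau(0)\ne 0$ (and $\tau(0)=\widehat\tau(0)$), we conclude $\widehat\tau=\tau$, i.e., $\tau(\tt_1,\tt_2)=\tau(\tt_1,-\tt_2)$.

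An alternative, perhaps cleaner route uses the explicit basis directly: from Lemma \ref{lemma36} and the formulas $\Psi_k^{(2)}\in (\ii z^2)^{-k}\QQ[\![z^{-2h-2}]\!]$, $\Phi_k=\ii z^{2k-1}e_2$, one sees that every basis vector of $U$ has a second component lying in $z^{\rm odd}\QQ[\![z^{-2}]\!]\cdot\ii$ or in $(\ii z^2)^{\ZZ}\QQ[\![z^{-2h-2}]\!]$, and in either case $\iota$ sends the basis to an equally explicit spanning set; invariance of the span is then immediate. The main obstacle I anticipate is purely bookkeeping: keeping careful track of the factors of $\ii$ in $e_1+\ii e_2$ versus $e_1-\ii e_2$, and verifying that the normalization conventions (the $\ii^{N_2^2}$ factors, the shift $\Psi^{(a)}=1+O(z^{-1})$) are genuinely preserved, so that one really lands back on the \emph{same} normalized tau-function rather than a rescaling. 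Once that is pinned down, the argument is a short consequence of the uniqueness theorem.
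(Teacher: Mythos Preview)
Your overall strategy --- deduce the symmetry from the uniqueness statement of Theorem \ref{t1} --- is exactly what the paper does. The paper's execution, however, stays in the bosonic picture and is a three-line argument: one checks directly that the Hirota bilinear operators $\Omega_m$ and the string operator $L_{-1}$ are invariant under $\tt_2\mapsto -\tt_2$ (each $J^2_m$ flips sign, but they appear in pairs in $L_{-1}$ and in $\Gamma_2\otimes\Gamma_2$), so $\tau(\tt_1,-\tt_2)$ is again a tau-function of the $(h,2)$-reduction satisfying the string equation, and uniqueness gives the result.

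Your Grassmannian route has a concrete error in the choice of involution. The map $\iota(f_1 e_1+f_2 e_2)=f_1 e_1-f_2 e_2$ does \emph{not} preserve the decomposition $V=V_0\oplus U_0$: it sends $e_1+\ii e_2\in U_0$ to $e_1-\ii e_2\in V_0$, so $\iota(U_0)\neq U_0$. Concretely, $\iota(\Psi)=(e_1-\ii e_2)+O(z^{-1})$ projects to $0$ in $U_0$, hence $\iota(U)\notin\Gr_2^{(0)}$ and the uniqueness argument cannot be invoked for $\iota(U)$. The ``relabeling'' you allude to does not exist: $U_0$ and $V_0$ play asymmetric roles in the definition of the big cell.

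The involution on $V$ that actually corresponds to $\tt_2\mapsto -\tt_2$ is $f_1(z)e_1+f_2(z)e_2\mapsto f_1(z)e_1+f_2(-z)e_2$ (you essentially observe this yourself when noting that $\Gamma_2$ in $-\tt_2$ equals $\Gamma_2$ in $\tt_2$ after $z\mapsto -z$). This map fixes $e_1+\ii e_2$ and $e_1-\ii e_2$, preserves $U_0$, $V_0$, and the pairing, and commutes with $a_2=\tfrac{1}{2z^2}(z\partial_z-1)$ and $b_2=z^2$ since both are even in $z$. With this corrected involution your argument goes through; but at that point it is no shorter than the paper's direct bosonic check.
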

\begin{proof}
Indeed, it is easy to check
that if $\tau(\tt_1,\tt_2)$ is a tau-function, then
$\tau(\tt_1,-\tt_2)$ is also a tau-function. The Virasoro constraints
are invariant under the inversion $\tt_2\mapsto -\tt_2$ (note that due
to the dilaton shift, the other inversion $\tt_1\mapsto -\tt_1$ does
not preserve the Virasoro constraints). The uniqueness of the reduced 
tau-function satisfying Virasoro constraints implies that
$\tau(\tt_1,\tt_2)=\tau(\tt_1,-\tt_2)$.
\end{proof}
The symmetry of the tau-function from this lemma yields
immediately the following symmetry of the two point function
$\phi_{2,2}$:
\begin{corollary}\label{cor:symm}
  The following symmetry holds: 
$
      \phi_{2,2}(-w,-z)=\phi_{2,2}(w,z).
$
\end{corollary}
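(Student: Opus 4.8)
The plan is to read the symmetry off directly from the explicit formula for the two-point function together with Lemma~\ref{le:symm}, with essentially no analytic input. Recall from (\ref{tildephi}) that
\[
\tilde{\phi}_{2,2}(z,w)=\tfrac12\,\frac{z-w}{z+w}\,\frac{\tau(0,-[z^{-1}]-[w^{-1}])}{\tau(0)},
\]
and that $\phi_{2,2}(z,w)=\iota_{|z|>|w|}\,\tilde{\phi}_{2,2}(z,w)$. First I would record two elementary facts. Since a tau-function of the 2-BKP hierarchy depends only on the odd times, the Miwa shift $-[z^{-1}]$ is the substitution $t_{2,m}\mapsto t_{2,m}-2z^{-m}/m$ with $m$ odd; hence replacing $z\mapsto -z$ and $w\mapsto -w$ negates every shifted time, i.e.\ it sends the sequence defining $-[z^{-1}]-[w^{-1}]$ (which is already symmetric in $z$ and $w$) to its negative. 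Second, $\frac{(-w)-(-z)}{(-w)+(-z)}=-\frac{z-w}{z+w}=\frac{w-z}{w+z}$.

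Then I would simply combine these with Lemma~\ref{le:symm}: applying $\tau(0,-\tt_2)=\tau(0,\tt_2)$ to the first fact gives $\tau\big(0,-[(-w)^{-1}]-[(-z)^{-1}]\big)=\tau\big(0,-[z^{-1}]-[w^{-1}]\big)$, so
\[
\tilde{\phi}_{2,2}(-w,-z)=\tfrac12\,\frac{w-z}{w+z}\,\frac{\tau(0,-[z^{-1}]-[w^{-1}])}{\tau(0)}=-\tilde{\phi}_{2,2}(z,w)=\tilde{\phi}_{2,2}(w,z),
\]
where the last equality uses the manifest symmetry of the $\tau$-factor in its two arguments. Finally, since the substitution $(z,w)\mapsto(-w,-z)$ turns the region $|z|>|w|$ into $|w|>|z|$, the Laurent expansion $\iota$ is taken in the same region on both sides, so the identity passes from $\tilde{\phi}_{2,2}$ to $\phi_{2,2}$, giving $\phi_{2,2}(-w,-z)=\phi_{2,2}(w,z)$.

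There is no real obstacle here; the single point requiring care is the sign bookkeeping — that all times in $\tt_2$ are odd, so that the Miwa shift genuinely changes sign under $z\mapsto -z$, and that the direction of the inequality governing the expansion region is tracked correctly under the substitution $(z,w)\mapsto(-w,-z)$.
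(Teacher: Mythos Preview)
Your argument is correct and is precisely the computation the paper has in mind: the paper gives no separate proof, simply stating that the corollary ``yields immediately'' from Lemma~\ref{le:symm}, and what you have written is exactly that verification spelled out from the explicit formula~(\ref{eq_phi}) (equivalently~(\ref{tildephi})). The sign tracking and the observation that $(z,w)\mapsto(-w,-z)$ sends the expansion region $|z|>|w|$ to $|w|>|z|$ are the only points of content, and you have handled both correctly.
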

We also have:
\begin{lemma}\label{lem_K_reg}
  Let $g$ be the differential operator \eqref{do:g}. Then
\begin{equation}
\begin{split}\nn
e^{g_z+g_w}\,  \frac{zw}{z^2-w^2}&= \frac{zw}{z^2-w^2},\\
 e^{g_z+g_w}\,  \frac{z^2+w^2}{z^2-w^2}  -  \frac{z^2+w^2}{z^2-w^2} &\in {\mathbb C}[\![z^{-2},w^{-2}]\!].
\end{split}
\end{equation}
Moreover, the difference in the last line vanishes as $|z|=|w|=\infty$.
\end{lemma}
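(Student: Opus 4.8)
The plan is to work in the variables $s=z^2$, $r=w^2$, where $a_2$ is conjugate to a constant–coefficient operator, and then to compute $e^{g_z+g_w}$ on the two kernels directly. From \eqref{aoper} and $z\partial_z=2s\partial_s$ one gets $a_2=\partial_s-\tfrac{1}{2s}=s^{1/2}\partial_s\,s^{-1/2}$, so by \eqref{do:g}, with $\nu:=\tfrac{{\bf i}^h}{h+1}$, we have $g_z=-\nu\,s^{1/2}\partial_s^{h+1}s^{-1/2}$, $e^{g_z}=s^{1/2}e^{-\nu\partial_s^{h+1}}s^{-1/2}$ (and likewise in $w$, $r$), hence $e^{g_z+g_w}F=(sr)^{1/2}e^{-\nu(\partial_s^{h+1}+\partial_r^{h+1})}\big[(sr)^{-1/2}F\big]$ for any $F$. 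The first identity then follows at once: with $F=\tfrac{zw}{z^2-w^2}=\tfrac{\sqrt{sr}}{s-r}$ one has $(sr)^{-1/2}F=\tfrac1{s-r}$, a function of $s-r$ alone, and since $h+1=2N-1$ is \emph{odd}, $(\partial_s^{h+1}+\partial_r^{h+1})\tfrac1{s-r}=0$; so the exponential acts as the identity and $e^{g_z+g_w}\tfrac{zw}{z^2-w^2}=(sr)^{1/2}\tfrac1{s-r}=\tfrac{zw}{z^2-w^2}$.

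For the second statement I would take $F=\tfrac{z^2+w^2}{z^2-w^2}=\tfrac{s+r}{s-r}=1+2\sum_{n\ge1}(r/s)^n$ (expanded for $|z|>|w|$), so $(sr)^{-1/2}F=(sr)^{-1/2}+2\sum_{n\ge1}r^{n-1/2}s^{-n-1/2}$, and apply $e^{g_z+g_w}$ term by term. A short telescoping gives
\[
e^{-\nu\partial_s^{h+1}}s^{-n-1/2}=s^{-n-1/2}\sum_{k\ge0}\frac{\nu^k}{k!}\,\frac{\Gamma\!\big(n+k(h+1)+\tfrac12\big)}{\Gamma\!\big(n+\tfrac12\big)}\,s^{-k(h+1)},
\]
\[
e^{-\nu\partial_r^{h+1}}r^{n-1/2}=r^{n-1/2}\sum_{l\ge0}\frac{(-\nu)^l}{l!}\,\frac{\Gamma\!\big(n+\tfrac12\big)}{\Gamma\!\big(n-l(h+1)+\tfrac12\big)}\,r^{-l(h+1)},
\]
the opposite sign of $\nu$ in the two being exactly where $(-1)^{h+1}=-1$ enters. (On the constant $1$ these reproduce the series \eqref{psi_as_aop}--\eqref{psik_as_aop} for $\Psi^{(2)}$, and $(sr)^{1/2}e^{-\nu(\partial_s^{h+1}+\partial_r^{h+1})}\big[(sr)^{-1/2}\big]=\Psi^{(2)}(z)\Psi^{(2)}(w)$.) Multiplying, the $\Gamma(n+\tfrac12)$'s cancel, and setting aside the $k=l=0$ term, which rebuilds $\tfrac{s+r}{s-r}$, one is left with
\[
e^{g_z+g_w}\frac{s+r}{s-r}-\frac{s+r}{s-r}=\big[\Psi^{(2)}(z)\Psi^{(2)}(w)-1\big]+2\sum_{n\ge1}r^ns^{-n}\!\!\!\sum_{\substack{k,l\ge0\\(k,l)\neq(0,0)}}\!\!\!\frac{\nu^k(-\nu)^l}{k!\,l!}\,\frac{\Gamma(q+\tfrac12)}{\Gamma(p+\tfrac12)}\,s^{-k(h+1)}r^{-l(h+1)},
\]
with $q=n+k(h+1)$, $p=n-l(h+1)$.

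It then remains to see that this lies in ${\mathbb C}[\![z^{-2},w^{-2}]\!]$ and vanishes at $|z|=|w|=\infty$. The bracket does, by \eqref{psik_as_aop} (only strictly negative powers of $z$ and of $w$). In the double sum every power of $s$ is already $\le-1$, so the only monomials that could fall outside ${\mathbb C}[\![z^{-2},w^{-2}]\!]$ are the $s^{-q}r^{p}$ with $p\ge1$; for these the constraint $n\ge1$ is automatic, the factor $\Gamma(q+\tfrac12)/\Gamma(p+\tfrac12)$ does not depend on the summation, and the coefficient is a nonzero multiple of $\sum_{l=0}^{K}\binom{K}{l}(-1)^l=(1-1)^K=0$, where $K:=(q-p)/(h+1)\ge1$. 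Hence no positive powers of $r$ (nor of $s$) survive, every remaining monomial has $q\ge1$, and the difference is a power series in $z^{-2},w^{-2}$ with vanishing constant term.

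The first identity is a one–line consequence of the conjugation; essentially all the work sits in the second one, and the real obstacle there is the bookkeeping that isolates "positive powers of $r$" as the sole obstruction and then kills them via $\sum_l\binom Kl(-1)^l=0$ — a cancellation that, like the first identity, genuinely relies on $h+1$ being odd.
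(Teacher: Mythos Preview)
Your proof is correct, but the route differs from the paper's. The paper never changes variables; instead it exploits that $a_2$ preserves parity in $z$ and uses the algebraic factorization
\[
a_2^{h+1}+\tilde a_2^{h+1}=\big(a_2^{2h}-a_2^{2h-1}\tilde a_2+\cdots+\tilde a_2^{2h}\big)\,(a_2+\tilde a_2),
\]
valid because $h+1$ is odd, together with the single computation $(a_2+\tilde a_2)\tfrac{z-w}{z+w}=\tfrac{1}{2w^2}-\tfrac{1}{2z^2}$. Splitting $\tfrac{z-w}{z+w}$ into its even--even part $\tfrac{z^2+w^2}{z^2-w^2}$ and odd--odd part $-2\tfrac{zw}{z^2-w^2}$ then gives both statements at once: the odd--odd piece is annihilated (hence the exact first identity), and the even--even piece lands in $\mathbb C[\![z^{-2},w^{-2}]\!]$ with no constant term.

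Your conjugation $a_2=s^{1/2}\partial_s\,s^{-1/2}$ is a genuinely different and illuminating manoeuvre: it makes the first identity literally the statement $(\partial_s^{h+1}+\partial_r^{h+1})\tfrac1{s-r}=0$, which is as clean as one could hope. The price is that the second identity then requires an explicit term-by-term expansion and the binomial cancellation $\sum_{l=0}^{K}\binom{K}{l}(-1)^l=0$ to kill the positive powers of $r$; this is more bookkeeping than the paper needs, but it is carried out correctly (in particular, your observation that for $p\ge1$ every $l\in\{0,\dots,K\}$ yields $n=p+l(h+1)\ge1$, so the full binomial sum is present, is the crucial point). Both arguments rest on $h+1$ being odd, but they use it in different places: the paper in the factorization of $a_2^{h+1}+\tilde a_2^{h+1}$, you in the sign flip $(-1)^{h+1}=-1$ that produces the $(-\nu)^l$ and hence the alternating binomial sum.
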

\begin{proof}
Since
\be\nn
\frac{z-w}{z+w}=\frac{z^2+w^2}{z^2-w^2} -2 \frac{zw}{z^2-w^2},
\ee
and operator $a_2$ is even, to prove the statement it is enough to show that 
\begin{equation}
\begin{split}\nn
\left(a_2^{h+1}+\tilde{a}_2^{h+1}\right)\frac{z-w}{z+w}\in {\mathbb C}[\![z^{-2},w^{-2}]\!],
\end{split}
\end{equation}
where $\tilde{a}_2$ is the operator $a_2$, acting on the functions of variable $w$. Since $h$ is even, the statement of the lemma follows from identities $a_2^{h+1}+\tilde{a}_2^{h+1}=(a_2^{2h}-a_2^{2h-1}\tilde{a}_2+\dots+\tilde{a}_2^{2h})(a_2+\tilde{a}_2)$ and
\be\nn
\left(a_2+\tilde{a}_2\right)\frac{z-w}{z+w}=\frac{1}{2w^2}-\frac{1}{2z^2}.
\ee
\end{proof}

Put
\be\nn
r_m:=(-1)^m \theta(m),
\ee
where $\theta(m)$ is the Heaviside function, so that
\be\nn
K(z,w)=2 \sum_{m=0}^\infty r_m \left(\frac{w}{z}\right)^m.
\ee

The identities in next proposition are very interesting. They will
play a key role in the construction of our matrix model. These formulas indicate the special role of the basis (\ref{basisexp}).
\begin{proposition}\label{kern_basis}
The following formulas hold:
\begin{align}
\nn
\phi_{1,b}(z,w) & = \sum_{k=0}^\infty r_k \Psi_{-k}^{(1)}(z)\Psi_{k}^{(b)}(w),
\\
\nn
\phi_{2,2}(w,z) & = \sum_{k=0}^\infty r_k \Psi_{-k}^{(2)}(z)\Psi_{k}^{(2)}(w)-\sum_{k=0}^\infty \left(\frac{z}{w}\right)^{2k+1}.
\end{align}
\end{proposition}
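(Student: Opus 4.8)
I would split the statement into the three identities it contains — the $(1,1)$ and $(1,2)$ cases $\phi_{1,b}(z,w)=\sum_{k\ge0}r_k\Psi_{-k}^{(1)}(z)\Psi_k^{(b)}(w)$, and the $(2,2)$ case — and prove each by showing that the two sides, which a priori live in different‑looking spaces, both decompose as ``an explicit meromorphic singular part at $z=-w$ (for $(1,1)$ and the even part of $(2,2)$) plus a regular piece in $\CC[\![z^{-1},w^{-1}]\!]$ vanishing at $|z|=|w|=\infty$'', and that the regular pieces agree. That the left sides have this shape is exactly Proposition \ref{prop_exp}, Corollary \ref{cor_phiphi} and Lemma \ref{lem_K_reg}; that the candidate kernels do is an elementary computation with the leading terms of Lemma \ref{lemma36} (note $\sum_k r_k z^{-k}w^k=\tfrac12\tfrac{z-w}{z+w}$, reproducing the $\tfrac12\tilde K$ singular part, and $\sum_{k\ge0}(z/w)^{2k+1}=-zw/(z^2-w^2)$). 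So the content is the comparison of regular parts.

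\textbf{The $(2,2)$ case.} This one can be done by hand. Using $(\ref{psik_as_aop})$, $\Psi_k^{(2)}(z)=e^{g_z}\bigl((2k-1)!!\,(2\ii z^2)^{-k}\bigr)$ for all $k\in\ZZ$, together with $(2k-1)!!\,(-2k-1)!!=(-1)^k$ and the fact that $g=-\ii^h a_2^{h+1}/(h+1)$ strictly lowers the degree (so $e^{g_z+g_w}$ is continuous and can be pulled through convergent sums), I would get $\sum_{k\ge0}r_k\Psi_{-k}^{(2)}(z)\Psi_k^{(2)}(w)=e^{g_z+g_w}\!\sum_{k\ge0}r_k(-1)^k(z/w)^{2k}=e^{g_z+g_w}\bigl(-\tfrac12\tfrac{z^2+w^2}{z^2-w^2}\bigr)$. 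Feeding in $\sum_{k\ge0}(z/w)^{2k+1}=-zw/(z^2-w^2)$, the identity $-\tfrac12\tfrac{z^2+w^2}{z^2-w^2}+\tfrac{zw}{z^2-w^2}=-\tfrac12\tfrac{z-w}{z+w}$, and Lemma \ref{lem_K_reg} (which lets $e^{g_z+g_w}$ slide past the $zw/(z^2-w^2)$ term), the desired equality reduces to $\tfrac{z-w}{z+w}\cdot\tfrac{\tau(0,-[z^{-1}]-[w^{-1}])}{\tau(0)}=e^{g_z+g_w}\tfrac{z-w}{z+w}$, i.e. to a formula for the two‑point function of the restriction $\sigma(\tt_2):=\tau(0,\tt_2)$. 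The latter is a $1$‑component BKP tau‑function whose point of the Grassmannian is $e^{g}W_0$ with $W_0$ the point of the vacuum tau‑function (visible from $(\ref{psi_as_aop})$, $(\ref{psik_as_aop})$), and the two‑point kernel of $e^{g}W_0$ is $e^{g_z+g_w}$ applied to that of $W_0$, which is $\tfrac12\tfrac{z-w}{z+w}$; this yields the formula. Here the evenness $\phi_{2,2}(-w,-z)=\phi_{2,2}(w,z)$ of Corollary \ref{cor:symm} — hence of Lemma \ref{le:symm} — is what makes the $\sigma$‑side consistent with the manifestly even right‑hand side.

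\textbf{The $(1,1)$ and $(1,2)$ cases.} For fixed $w$ I would view both sides as series in $z$. The right‑hand side $\sum_{k\ge0}r_k\Psi_k^{(b)}(w)\,\Psi_{-k}^{(1)}(z)$ is a convergent combination of the first components $\Psi_{-k}^{(1)}$ of the basis vectors $\Psi_{-k}\in U$ from $(\ref{basisexp})$, so its associated vector lies in (the relevant completion of) $U$; it has $\pi$‑projection $r_0\Psi_0^{(b)}(w)(e_1+\ii e_2)=\tfrac12\Psi_0^{(b)}(w)(e_1+\ii e_2)$, and its principal part at $z=-w$ is the prescribed one. I would then argue that $z\mapsto 2\phi_{1,b}(z,w)$ is pinned down by exactly these three data: it comes from $U$ (this is where the Grassmannian picture and the Kac--Schwarz operators $a,b$, the quantum spectral curve $A\Psi_k=-k\Psi_k$, and the relations $\Psi_{k+1}=\ii a\Psi_k$, $\Psi_{-k-1}=-\tfrac{2\ii}{2k+1}c\,\Psi_{-k}$, $a\Phi_1=0$ enter — they force that, in the expansion over $(\ref{basisexp})$, the $\Phi_k$ and the $\Psi_k$ with $k>0$ do not contribute); it has the right $\pi$‑projection; and the right principal part. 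Uniqueness of an element of $U$ with given projection (the defining property of $\operatorname{Gr}_2^{(0)}$) then finishes. For $b=2$ one additionally needs $\phi_{1,2}(z,w)=\tfrac12\,\tau(-[z^{-1}],-[w^{-1}])/\tau(0)$ to be even in $w$, equivalently $\tau(-[z^{-1}],-[w^{-1}])=\tau(-[z^{-1}],[w^{-1}])$ — precisely Lemma \ref{le:symm} again — so that its $w$‑parity matches that of $\Psi_k^{(2)}(w)$.

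\textbf{Main obstacle.} The $(2,2)$ case is comparatively soft because of the explicit $e^{g}$‑form of $\Psi_k^{(2)}$. The real work is in the $(1,1)$/$(1,2)$ cases: identifying $\phi_{1,b}(\cdot,w)$ with the right element of $U$ and, above all, showing that only the ``negative'' basis vectors $\Psi_{-k}$ survive in its expansion, with coefficients exactly $r_k\Psi_k^{(b)}(w)$. This is the step that genuinely uses the special structure of $(\ref{basisexp})$ — which is why, as the paper says, these formulas ``indicate the special role of the basis''. A secondary, purely technical nuisance is to fix once and for all the completed spaces in which the infinite sums and the infinite‑order operators $e^{g}$ are manipulated, so that ``pull $e^{g_z+g_w}$ through the sum'' and ``expand $\phi_{1,b}(\cdot,w)$ over $(\ref{basisexp})$'' are legitimate; one can bypass part of this by instead checking that $\phi_{1,b}(\cdot,w)$ and the candidate satisfy one and the same first‑order equation in $z$ descended from $b$ and $A$, with the same value at $z=\infty$.
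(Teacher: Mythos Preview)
Your treatment of the $(2,2)$ case is essentially the paper's: both amount to showing $\phi_{2,2}(w,z)=\tfrac12 e^{g_z+g_w}K(w,z)$, you by identifying $\tau(0,\mathbf t_2)$ with the $1$-BKP tau-function of the point $e^{g}W_0$, the paper by the equivalent $M=0$ computation.

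The $(1,b)$ argument, however, has a genuine gap. Two concrete problems:

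\textbf{(i) The $\pi$-projection claim is false.} For the vector $\sum_{k\ge 0}r_k\Psi_k^{(b)}(w)\Psi_{-k}(z)$, the second component involves $\Psi_{-k}^{(2)}(z)\sim z^{2k}$ for $k\ge1$ (Lemma~\ref{lemma36}), so each $\Psi_{-k}$ with $k\ge1$ has nonzero image under $\pi$, lying in $z\CC[z]e_2$. Hence the $\pi$-projection is \emph{not} $\tfrac12\Psi_0^{(b)}(w)(e_1+\ii e_2)$, and your ``same projection $\Rightarrow$ equal'' argument does not go through.

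\textbf{(ii) You never say which element of $U$ the scalar $\phi_{1,b}(z,w)$ is the first component of.} This is the missing key idea. The paper packages the two-point functions as
\[
X_1(w,z)=\phi_{1,1}(w,z)e_1+\ii\phi_{1,2}(w,z)e_2\in U,\qquad
X_2(w,z)=\phi_{1,2}(z,w)e_1+\ii\phi_{2,2}(w,z)e_2\in U,
\]
coming directly from the wave function. Asymptotics then force $X_1=\sum_{k\ge0}\alpha_k(w)\Psi_k$ and $X_2=\sum_{k\ge0}\beta_k(w)\Psi_{-k}+\sum_{k\ge1}\gamma_k(w)\Phi_k$. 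The crucial point you are missing is that $\phi_{1,2}$ appears in \emph{both} $X_1$ and $X_2$: the $(2,2)$ computation (your part) fixes $\beta_k=r_k\Psi_k^{(2)}(w)$; reading off $X_2^{(1)}$ gives the $(1,2)$ formula; swapping variables and comparing with $X_1^{(2)}$ gives $\alpha_k=r_k\Psi_{-k}^{(1)}(w)$; finally $X_1^{(1)}$ yields the $(1,1)$ formula. Without this linking through $\phi_{1,2}$ you have no mechanism to determine the coefficients $\alpha_k$, and your standalone projection argument cannot supply one.
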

\begin{proof}
  Recalling the definition of the wave
  function (\ref{wave-1})--(\ref{wave-2}), we get the following formulas for $b\in\{1,2\}$:
  \begin{align}\nn
    \phi_{1,b}(w,z) & = \Psi^{(b)}(\tt^\circ,z)\Psi^{(1)}(w)/2 ,\\ \nn
    \phi_{b,2}(z,w) &=\Psi^{(b)}(\tilde{\bf t}^\circ,z)\Psi^{(2)}(w)/2,
  \end{align}
  where
$\mathbf{t}^\circ$ and $\tilde{\tt}^\circ$ are defined by 
$t^\circ_{1,m}= -2w^{-m}/m$, $t^\circ_{2,m}=0$ and
$\tilde{t}^\circ_{1,m}=0$,
$\tilde{t}^\circ_{2,m}= -2w^{-m}/m$. 
We introduce
\begin{align}\nn
  X_1(w,z) & =\phi_{1,1}(w,z)e_1+\ii \phi_{1,2}(w,z) e_2,\\
  \nn
X_2(w,z) & =\phi_{1,2}(z,w)e_1 + \ii\phi_{2,2}(w,z) e_2.
\end{align}
Viewing $w$ as a parameter, we get that
\be\nn
X_a(w,z) \in U,\quad a=1,2.
\ee
Using that $U$ is spanned by $\Psi_k$ and $\Phi_k$, recalling the
asymptotics given by (\ref{Phik}) and Lemma \ref{lemma36}, and having
in mind that $\phi_{1,2}(z,w)\in{\mathbb C}[\![z^{-1},w^{-1}]\!]$, we get
that $X_a(w,z)$ can be decomposed as follows:
\be\label{Xdeco}
X_1(w,z)=\sum_{k=0}^\infty \alpha_{k}(w) \Psi_k(z),\\
X_2(w,z)=\sum_{k=0}^\infty\beta_{k}(w) \Psi_{-k}(z) +\sum_{k=1}^\infty\gamma_k(w) \Phi_k(z),
\ee
for some Laurent series $\alpha_k,\beta_k,\gamma_k\in
\CC(\!(w^{-1})\!)$. In particular,
\be\label{phibeta}
\phi_{2,2}(w,z)=\sum_{k=0}^\infty\beta_{k}(w) \Psi_{-k}^{(2)}(z)+\sum_{k=1}^\infty\gamma_k(w) z^{2k-1}.
\ee
Comparing it with Proposition \ref{prop_exp} and having in mind that $\Psi_{-k}^{(2)}$ is even, $\Psi_{-k}^{(2)}(-z)=\Psi_{-k}^{(2)}(z)$, we immediately conclude that 
\be\nn
\gamma_k(w)=-w^{1-2k}.
\ee
Let us consider 
\be\nn
M(w,z)=\sum_{k=0}^\infty\beta_{k}(w) \Psi_{-k}^{(2)}(z) - \frac{1}{2}e^{g_z+g_w}\, \iota_{|w|>|z|} \frac{w^2+z^2}{w^2-z^2}.
\ee
From Proposition \ref{prop_exp} and Lemma \ref{lem_K_reg} it follows that
\be\nn
\phi_{2,2}(w,z) - \frac{1}{2}  e^{g_z+g_w}\,  K(w,z) =M(w,z),
\ee
and that $M(w,z)\in{\mathbb C}[\![z^{-2},w^{-2}]\!]$ with vanishing constant term. Let us show that $M(w,z)=0$. Let us act on $M$ by the operator $e^{-g_z}$. From (\ref{psik_as_aop}) we have 
\be\nn
e^{-g_z}M(w,z)=\sum_{k=0}^\infty\beta_{k}(w) (-1)^k \frac{(2{\bf i}z^2)^k}{(2k-1)!!} - \frac{1}{2}e^{g_w}\, \iota_{|w|>|z|}  \frac{w^2+z^2}{w^2-z^2}.
\ee
Hence $e^{-g_z}M(w,z)\in{\mathbb C}[\![z^{2},w^{-2}]\!]$ with trivial constant term. In the same time $e^{-g_z}M(w,z)\in{\mathbb C}[\![z^{-2},w^{-2}]\!]$, hence $e^{-g_z}M(w,z)=0$.
The kernel of the operator $e^{-g_z}$ on ${\mathbb C}[\![z^{-2}]\!]$ is trivial, therefore $M(w,z)=0$, and
\be\label{eq_phi_gr}
\phi_{2,2}(z,w) = \frac{1}{2}  e^{g_z+g_w}\,   K(z,w).
\ee
Comparing its expansion with 
(\ref{phibeta}) we conclude that $\beta_k(w)=r_k\Psi_{k}^{(2)}(w)$, and from (\ref{Xdeco}) it follows that $\alpha_k(w)=r_k\Psi_{-k}^{(1)}(w)$. This completes the proof. 
\end{proof}

\begin{corollary}\label{Cor22}
\be\nn
\tilde{\phi}_{2,2}(z,w)=\frac{1}{2} e^{g_z+g_w} \frac{z-w}{z+w}
\ee
\end{corollary}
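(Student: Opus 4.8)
The plan is to obtain Corollary~\ref{Cor22} as a direct bookkeeping consequence of formula~\eqref{eq_phi_gr}, which was proved inside Proposition~\ref{kern_basis} and states $\phi_{2,2}(z,w)=\tfrac12\,e^{g_z+g_w}K(z,w)$, by ``removing'' the Laurent expansion operator $\iota_{|z|>|w|}$ from both sides. First I would observe that, by~\eqref{do:g} together with~\eqref{aoper}, each of $g_z$ and $g_w$ is a differential operator of strictly negative degree in the grading $\deg z^{-1}=\deg\partial_z=-1$ whose coefficients are Laurent polynomials in $z$, respectively $w$; consequently the series $e^{g}=\sum_{n\ge 0}g^n/n!$ acts term by term on any Laurent series that is bounded above in degree, so $e^{g_z+g_w}$ is well defined, and, since $\partial_z$, $\partial_w$ and multiplication by $z^{-k}$, $w^{-k}$ each commute with taking the Laurent expansion in the region $|z|>|w|$, so does $e^{g_z+g_w}$. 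Writing $\tilde K(z,w)=\tfrac{z-w}{z+w}$ and using $\phi_{2,2}=\iota_{|z|>|w|}\tilde\phi_{2,2}$ and $K=\iota_{|z|>|w|}\tilde K$, formula~\eqref{eq_phi_gr} then becomes
\[
\iota_{|z|>|w|}\,\tilde\phi_{2,2}(z,w)\;=\;\iota_{|z|>|w|}\Bigl(\tfrac12\,e^{g_z+g_w}\tilde K(z,w)\Bigr).
\]

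Next I would argue that $\iota_{|z|>|w|}$ is injective on the class of objects containing both $\tilde\phi_{2,2}$ and $\tfrac12\,e^{g_z+g_w}\tilde K$. By Proposition~\ref{prop_exp} one has $\tilde\phi_{2,2}(z,w)-\tfrac12\tilde K(z,w)\in{\mathbb C}[\![z^{-1},w^{-1}]\!]$; and, splitting $\tfrac{z-w}{z+w}=\tfrac{z^2+w^2}{z^2-w^2}-2\tfrac{zw}{z^2-w^2}$ and applying Lemma~\ref{lem_K_reg}, one gets $\tfrac12\,e^{g_z+g_w}\tilde K(z,w)-\tfrac12\tilde K(z,w)\in{\mathbb C}[\![z^{-2},w^{-2}]\!]\subset{\mathbb C}[\![z^{-1},w^{-1}]\!]$. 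Subtracting, the difference $\tilde\phi_{2,2}(z,w)-\tfrac12\,e^{g_z+g_w}\tilde K(z,w)$ lies in ${\mathbb C}[\![z^{-1},w^{-1}]\!]$, on which $\iota_{|z|>|w|}$ acts as the identity; since its image under $\iota_{|z|>|w|}$ vanishes by the displayed identity, the difference itself vanishes, which is precisely the claim of Corollary~\ref{Cor22}.

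The only genuinely delicate points — and the ones I would expect to be the main, though still routine, obstacle — are the precise justification that $e^{g_z+g_w}$ commutes with $\iota_{|z|>|w|}$ and that $e^{g_z+g_w}$ makes sense when applied to the meromorphic function $\tfrac{z-w}{z+w}$: applying $g_z$ (resp.\ $g_w$) to it produces a meromorphic function with new poles only at $z=0$ (resp.\ $w=0$), which do not obstruct the expansion in $|z|>|w|$, and the negative degree of $g$ guarantees that in each fixed degree only finitely many terms of the exponential series contribute. Once these checks are in place there is nothing more to do; alternatively, one could instead combine Corollary~\ref{cor_phiphi} with~\eqref{eq_phi_gr} to get $\tilde\phi_{2,2}=\tfrac12\tilde K+\tfrac12\bigl(e^{g_z+g_w}K-K\bigr)$ and then verify that $e^{g_z+g_w}$ fixes $\tilde K-K$, but the route through the injectivity of $\iota_{|z|>|w|}$ seems the cleaner one.
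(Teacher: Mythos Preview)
Your argument is correct and uses exactly the same ingredients as the paper's proof, which simply cites Corollary~\ref{cor_phiphi}, equation~\eqref{eq_phi_gr}, and Lemma~\ref{lem_K_reg}. In fact the alternative route you describe at the end---writing $\tilde\phi_{2,2}=\tfrac12\tilde K+\bigl(\phi_{2,2}-\tfrac12 K\bigr)=\tfrac12\tilde K+\tfrac12\bigl(e^{g_z+g_w}K-K\bigr)$ and then using Lemma~\ref{lem_K_reg} together with the commutation of $e^{g_z+g_w}$ with $\iota_{|z|>|w|}$ to identify $e^{g_z+g_w}K-K$ with $e^{g_z+g_w}\tilde K-\tilde K$---is precisely what the paper has in mind; your main route through the injectivity of $\iota_{|z|>|w|}$ on $\CC[\![z^{-1},w^{-1}]\!]$ is just a repackaging of the same computation.
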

\begin{proof}
The statement follows from Corollary \ref{cor_phiphi}, (\ref{eq_phi_gr}) and Lemma \ref{lem_K_reg}.
\end{proof}

Substituting the integral representations of the basis elements  (\ref{vir-psi_2})  we get for  $b\in\{1,2\}$ the following formulas:
\begin{equation}
\begin{split} \label{2int_formal}
\phi_{1,b}(z,w) & =\frac{c_1 c_b}{2\pi}  \sqrt{\lambda \mu}
\iint_{\gamma_1\times \gamma_b} 
                  e^{\lambda (f(x)-u_1) + \mu (f(y)-u_b)} K(z x^2,w^{2/h_b} y^2) dx dy, \\
\phi_{2,2}(w,z) & = \frac{c_2^2}{2\pi}  \sqrt{\lambda \mu}
\iint_{\gamma_2\times \gamma_2} 
                  e^{\lambda f(x) + \mu f(y)}
                  K(z^{2/h}x^2,w^{2/h}y^2) dx dy -
                  \sum_{k=0}^\infty \left(\frac{z}{w}\right)^{2k+1}, 
\end{split}
\end{equation}
where $\lambda=\lambda_a(z)$ and $\mu=\lambda_b(w)$ in agreement with the choice of the contour of integration.


\subsection{Double integrals}\label{sec:di}

In this section we derive double integral expressions for $\widetilde{\phi}_{a,b}$. Note that in \eqref{2int_formal} we first expand in the powers of $z^{-1}$ and
then apply the steepest descent method. It turns out that if we
apply directly the steepest descent method, without expanding in
$z^{-1}$, then we will obtain exactly the 2-point functions
$\widetilde{\phi}_{a,b}$, including the extra term in the last equation of (\ref{2int_formal}).

We will be interested in asymptotic expansions near the critical
points $\xi_1=1$ and $\xi_2=0$ of $f(x)=x^{2h+2}-(h+1)x^2.$ The Taylor
series expansion of $f(x)$ at $x=\xi_a$ has the form
\be\nn
f(x) = u_a - X^2 + O(X^3),\quad X:= c_a (x-\xi_a),
\ee
where the constants $c_a$ are the same as in \eqref{c12}. We
define a formal series expansion 
$\Phi_{a,b}(\lambda,\mu)$ of the following double integrals,
\be\label{Phi_ab}
\frac{c_a c_b}{2\pi}
\sqrt{\lambda\mu} \iint_{\gamma_a(\lambda)\times \gamma_b(\mu)}
e^{\lambda (f(x)-u_a) + \mu(f(y)-u_b) } \tilde{K}( \lambda^{\frac{1}{h+1}}  x^2,\mu^{\frac{1}{h+1}}  y^2)
dx dy,
\ee
where the contour $\gamma_a(\lambda):= \xi_a +
\tfrac{1}{c_a\sqrt{\lambda}} \RR$ is the contour used in the steepest
descend method. Let us restrict the values of the parameter
  $\lambda$ as follows:  for the contour $\gamma_1(\lambda)$ we
  require $\lambda\in \RR_{<0}$, while for
  $\gamma_2(\lambda)$ we require $\lambda\in \ii \RR_{>0}$. For such a choice for every pair of contours $\gamma_a(\lambda)$ and
  $\gamma_b(\mu)$, the sum $\lambda^{1/(h+1)}+\mu^{1/(h+1)}$ does not
  vanish. 

There are two cases. First, if $(a,b)\neq (2,2)$, then the integrand
is regular on $\gamma_a\times \gamma_b$.
Using Taylor series expansion in a neighborhood of the critical point
$(x,y)=(\xi_a,\xi_b)$, we get that the integrand in \eqref{Phi_ab} has
an expansion of the form
\be\nn\label{exp-Phi_ab}
\frac{1}{2\pi}\sqrt{\lambda\mu}
e^{-\lambda X^2 -\mu  Y^2}\,
\sum_{k,l=0}^\infty
a_{k,l}^{a,b} (\lambda,\mu) X^k Y^l dX dY.
\ee
Here $X=c_a(x-\xi_a)$, $Y=c_b(y-\xi_a)$, and $a_{k,l}^{a,b}$ is a
polynomial expression in $\lambda^{1/(h+1)}$,
$\mu^{1/(h+1)}$, and
$(\lambda^{1/(h+1)}+\mu^{1/(h+1)})^{-1}$ (for $a=b=1$) or $\lambda^{-1/(h+1)}$ (for $a=1$, $b=2$). Then
$\Phi_{a,b}(\lambda,\mu)$ is defined by integrating termwise
the above expansion over $\ii\RR\times \ii\RR$. Since
\be\nn
\frac{1}{2\pi}\sqrt{\lambda\mu}
\iint_{(\ii\rr)^2}
e^{-\lambda X^2 -\mu  Y^2}
X^k Y^l dX dY =
\begin{cases}
  0 & \mbox{ if $k$ or $l$ is odd},\\
  \frac{1}{2\pi}
  \Gamma(\tfrac{k+1}{2})
  \Gamma(\tfrac{l+1}{2})
  \lambda^{-k/2}\mu^{-l/2} & \mbox{otherwise}, 
\end{cases}
\ee
we get
\be\label{phi_1b}
\Phi_{a,b}(\lambda,\mu) =
\frac{1}{2\pi} \sum_{k,l=0}^\infty
a_{2k,2l}^{a,b} (\lambda,\mu)
\Gamma(k+\tfrac{1}{2}) \Gamma(l+\tfrac{1}{2})
\lambda^{-k} \mu^{-l}.
\ee
Let us point out that the above formal procedure applies in the single
variable case too. In particular, except for the case $a=2$ and $k<0$,
the local cycles $\gamma_a$ in (\ref{vir-psi_2}) can be replaced with the infinite cycles
$\gamma_a(\lambda_a)$. However,
whenever we do this we loose the analyticity of the integrals, so we
have to interpret them formally via the steepest descend
method expansion. Having this remark in mind let us consider the 
expression \eqref{phi_1b} with $a=1$. We claim that it coincides with 
$\widetilde{\phi}_{1,b}(z,w)$, where
$\lambda=\tfrac{\ii  z^{h+1}}{h+1}$ and
$\mu=\tfrac{\mathbf{i} w^{h_b+h_b/h}}{h+1}$. Indeed,
$\widetilde{\phi}_{1,b}(z,w)$  is a series of the same type as
\eqref{phi_1b}. According to formula
\eqref{2int_formal}, both series coincide after expanding
each coefficient in the powers of $z^{-1}$. Therefore, from Corollary
\ref{cor_phiphi} we get
$\Phi_{a,b}(\lambda, \mu)=\widetilde{\phi}_{a,b}(z,w)$.

Suppose now that $a=b=2$. The integration kernel has a singularity
at $(x,y)=(\xi_2,\xi_2) = (0,0)$, but as we will see now, the
singularity is integrable. The integrand in \eqref{Phi_ab} 
can be written as
\begin{align}\label{Phi_22}
 \frac{c_2^2}{2\pi} \, \sqrt{\lambda\mu}\,
   e^{-(h+1)(\lambda x^2 + \mu y^2)} \, 
    e^{\lambda x^{2h+2} + \mu y^{2h+2}}
    \frac{
    \lambda^{1/(h+1)} x^2- \mu^{1/(h+1)} y^2 }{
    \lambda^{1/(h+1)} x^2+ \mu^{1/(h+1)} y^2} dx dy.
\end{align}
Note that the polynomial $\lambda x^{2h+2} + \mu y^{2h+2}$ is
divisible by $ \lambda^{1/(h+1)} x^2+ \mu^{1/(h+1)} y^2$, that
is, the denominator of the integral kernel. Therefore, if we expand
the exponential $e^{\lambda x^{2h+2} + \mu y^{2h+2}}=
1+O(\lambda x^{2h+2} + \mu y^{2h+2})$, then only the constant
term is not divisible by the denominator, and \eqref{Phi_22}
can be expanded as follows:
\be\label{exp-Phi_22}
\frac{1}{2\pi} \, \sqrt{\lambda\mu}\,
e^{-(\lambda X^2 + \mu Y^2)}\left(
  \frac{
    \lambda^{1/(h+1)} X^2- \mu^{1/(h+1)} Y^2 }{
    \lambda^{1/(h+1)} X^2+ \mu^{1/(h+1)} Y^2} +
  \sum_{k,l=0}^\infty
  a_{k,l}^{2,2} (\lambda,\mu) X^k Y^l \right) dX dY.
\ee
Here $X:=c_2(x-\xi_2)=\sqrt{h+1} x$, $Y:=c_2(y-\xi_2) = \sqrt{h+1}
y$, and $a_{k,l}^{2,2}$ is a polynomial in $\lambda^{1/(h+1)}$ and
$\mu^{1/(h+1)}$.
\begin{lemma}\label{le:int_sing}
For $z,w>0$ 
\be\nn\label{eq_di}
\frac{1}{2}\frac{z-w}{z+w}=
\frac{zw}{2\pi}\iint_{\rr_+^2}  \frac{x-y}{x+y}e^{-w^2x-z^2y} \frac{dx dy}{\sqrt{xy}} .
\ee
\end{lemma}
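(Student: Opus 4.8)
This is an elementary two-dimensional Laplace-type computation with no conceptual obstacle; the only work is some bookkeeping (partial fractions, or an explicit antiderivative). First I would justify applying Fubini: since $|(x-y)/(x+y)|\le 1$ on $\RR_+^2$, the integrand is dominated by $e^{-w^2x-z^2y}/\sqrt{xy}$, whose integral over $\RR_+^2$ factors as $\big(\int_0^\infty x^{-1/2}e^{-w^2x}\,dx\big)\big(\int_0^\infty y^{-1/2}e^{-z^2y}\,dy\big)=\pi/(zw)<\infty$ for $z,w>0$, so everything below is absolutely convergent.

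Next I would pass to polar-type coordinates. The substitution $x=s^2$, $y=t^2$ ($s,t>0$) turns the right-hand side into $\tfrac{2zw}{\pi}\iint_{\RR_+^2}\tfrac{s^2-t^2}{s^2+t^2}\,e^{-w^2s^2-z^2t^2}\,ds\,dt$ (the Jacobian $\tfrac{dx\,dy}{\sqrt{xy}}=4\,ds\,dt$). Then $s=r\cos\theta$, $t=r\sin\theta$ with $\theta\in(0,\pi/2)$ gives $\tfrac{s^2-t^2}{s^2+t^2}=\cos2\theta$ and $w^2s^2+z^2t^2=r^2(w^2\cos^2\theta+z^2\sin^2\theta)$, and the radial integral $\int_0^\infty r\,e^{-r^2A}\,dr=\tfrac1{2A}$ is immediate. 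This reduces the claim to the one-dimensional identity $\tfrac{zw}{\pi}\int_0^{\pi/2}\tfrac{\cos2\theta}{w^2\cos^2\theta+z^2\sin^2\theta}\,d\theta=\tfrac12\tfrac{z-w}{z+w}$.

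Finally I would evaluate that angular integral. Setting $t=\tan\theta$ converts it to $\int_0^\infty\tfrac{1-t^2}{(w^2+z^2t^2)(1+t^2)}\,dt$; a partial-fraction split $\tfrac{1-t^2}{(w^2+z^2t^2)(1+t^2)}=\tfrac{\alpha}{1+t^2}+\tfrac{\beta}{w^2+z^2t^2}$ with $\alpha=\tfrac{2}{w^2-z^2}$, $\beta=\tfrac{z^2+w^2}{z^2-w^2}$, together with $\int_0^\infty(1+t^2)^{-1}dt=\tfrac\pi2$ and $\int_0^\infty(w^2+z^2t^2)^{-1}dt=\tfrac{\pi}{2zw}$, yields $\tfrac\pi2\cdot\tfrac{z-w}{(z+w)zw}$ after simplification using $z^2+w^2-2zw=(z-w)^2$; multiplying by $zw/\pi$ gives exactly $\tfrac12\tfrac{z-w}{z+w}$.

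An equivalent route, which I would mention as an alternative, is to write $\tfrac1{x+y}=\int_0^\infty e^{-u(x+y)}\,du$, interchange the integrals, and use $\int_0^\infty x^{1/2}e^{-ax}dx=\tfrac{\sqrt\pi}{2}a^{-3/2}$ and $\int_0^\infty x^{-1/2}e^{-ax}dx=\sqrt\pi\,a^{-1/2}$ to collapse the $x$- and $y$-integrals; the remaining integrand in $u$ is $(w^2+u)^{-3/2}(z^2+u)^{-1/2}-(w^2+u)^{-1/2}(z^2+u)^{-3/2}$, which has the explicit antiderivative $\tfrac{2}{w^2-z^2}\big[(\tfrac{z^2+u}{w^2+u})^{1/2}+(\tfrac{w^2+u}{z^2+u})^{1/2}\big]$, and evaluating from $0$ to $\infty$ gives the same answer. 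In either case the lemma is then used (together with Corollary \ref{Cor22}) to rewrite $\widetilde{\phi}_{2,2}$ as the double integral $\Phi_{2,2}$, so I expect no genuine difficulty here — the identity is a computational stepping stone.
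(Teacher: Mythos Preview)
Your proof is correct and follows essentially the same route as the paper: pass to polar coordinates, evaluate the radial integral, then reduce the angular integral via a tangent substitution and partial fractions. The only cosmetic differences are that you first substitute $x=s^2$, $y=t^2$ (making the radial integral Gaussian rather than exponential) and use $t=\tan\theta$ where the paper uses $u=\sqrt{\tan\theta}$; your alternative Laplace-representation argument via $\tfrac{1}{x+y}=\int_0^\infty e^{-u(x+y)}\,du$ is a genuinely different shortcut, but the primary argument matches the paper's.
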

\begin{proof}
Let us switch to polar coordinates $x=r\cos \theta$,
$y=r\sin\theta$. The integral takes the form
\begin{align}\nn
  &
\frac{zw}{2\pi} \,
\int_0^{\pi/2} \int_0^\infty
e^{-(z^2 \cos\theta +w^2 \sin\theta)  r}
\frac{\cos\theta -\sin\theta}{\cos\theta+\sin\theta}
  \frac{dr d\theta}{\cos \theta \, \sin\theta}  = &  \\
  \nn
  &
= \frac{zw}{2\pi} \,
  \int_0^{\pi/2}
  \frac{\cos\theta -\sin\theta}{\cos\theta+\sin\theta}
  \frac{1}{z^2 \cos\theta +w^2 \sin\theta}
  \frac{d\theta}{\sqrt{\cos\theta\, \sin\theta}}. &  
\end{align}
Finally, using the substitution $u=\sqrt{\tan \theta}$, we get
\be\nn
\frac{z w}{\pi}\,
\int_0^{\infty}
\frac{1-u^2}{(1+u^2)(z^2+ w^2 u^2)}\, du.
\ee
The above integral is straightforward to compute, and it is equal to
$\tfrac{1}{2}\tfrac{w-z}{w+z}$. 
\end{proof}
Now we are ready to prove the following lemma.
\begin{lemma}\label{l_57}
  The following identity holds:
\be\nn
\widetilde{\phi}_{a,b}(z,w)=
(-1)^{\delta_{a+b,4}}\Phi_{a,b}(\lambda,\mu),
\quad
1\leq a\leq b \leq 2. 
\ee
\end{lemma}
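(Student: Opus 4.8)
The plan is to treat the three cases $(a,b)\in\{(1,1),(1,2),(2,2)\}$ separately, but the first two require essentially no new work. For $(a,b)=(1,1)$ and $(a,b)=(1,2)$ we have $a+b\le 3$, so $(-1)^{\delta_{a+b,4}}=1$, and the identity $\Phi_{a,b}(\lambda,\mu)=\widetilde{\phi}_{a,b}(z,w)$ is the one already established in the discussion preceding the statement: both sides are formal series of the type \eqref{phi_1b}, they agree after expanding each coefficient in powers of $z^{-1}$ by comparison with \eqref{2int_formal}, and Corollary \ref{cor_phiphi} then forces equality. So everything reduces to the case $a=b=2$, where $\delta_{a+b,4}=1$ and one must prove
\be\nn
\widetilde{\phi}_{2,2}(z,w)\ =\ -\,\Phi_{2,2}(\lambda,\mu),\qquad \lambda=\lambda_2(z),\quad\mu=\lambda_2(w).
\ee
By Corollary \ref{Cor22} the left-hand side equals $\tfrac12\,e^{g_z+g_w}\tfrac{z-w}{z+w}$, which by Lemma \ref{lem_K_reg} is $-\tfrac{zw}{z^2-w^2}+\tfrac12\,e^{g_z+g_w}\tfrac{z^2+w^2}{z^2-w^2}$, the last term differing from $\tfrac12\tfrac{z^2+w^2}{z^2-w^2}$ only by an element of ${\mathbb C}[\![z^{-2},w^{-2}]\!]$ that vanishes at $|z|=|w|=\infty$.

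For $\Phi_{2,2}$ I would split the integrand exactly as in \eqref{exp-Phi_22}, writing $\Phi_{2,2}=P+R$, where $P$ collects the contribution of the rational factor $\tfrac{\lambda^{1/(h+1)}X^2-\mu^{1/(h+1)}Y^2}{\lambda^{1/(h+1)}X^2+\mu^{1/(h+1)}Y^2}$ and $R$ collects the contribution of the entire Gaussian corrections $\sum_{k,l}a^{2,2}_{k,l}(\lambda,\mu)X^kY^l$. The piece $R$ is handled by the same steepest-descent comparison used for $a=1$: it is again a series of the type \eqref{phi_1b}, and matching the ``kernel-first'' double integral of \eqref{2int_formal} (which reproduces $\phi_{2,2}(w,z)$ up to the extra term $\sum_{k\ge0}(z/w)^{2k+1}$) against \eqref{Phi_22}, then using Corollary \ref{cor_phiphi}, Proposition \ref{prop_exp} and Lemma \ref{lem_K_reg} (cf. Proposition \ref{kern_basis}), one identifies $R$ with the element of ${\mathbb C}[\![z^{-2},w^{-2}]\!]$ described above, i.e. with $\tfrac12\tfrac{z-w}{z+w}-\widetilde{\phi}_{2,2}(z,w)$. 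The only genuinely new input is therefore a closed-form evaluation of $P$.

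To evaluate $P$ I would substitute $x\mapsto x^2$, $y\mapsto y^2$ and rescale the new variables so as to normalize the quadratic form in the Gaussian weight and the coefficients of $x^2,y^2$ in the rational factor; using $\lambda=\lambda_2(z)=\tfrac{\mathbf{i}\,z^{2+2/h}}{h+1}$ one has $\lambda^{h/(h+1)}=\kappa z^2$ and $\sqrt\lambda\,\lambda^{-1/(2(h+1))}=\sqrt\kappa\,z$ with $\kappa:=\bigl(\tfrac{\mathbf{i}}{h+1}\bigr)^{h/(h+1)}$ (and likewise for $\mu,w$), so the branch constant $\kappa$ cancels out of the answer. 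After rotating the steepest-descent rays $\gamma_2(\lambda),\gamma_2(\mu)$ onto $\RR_{>0}$ — which, for $z,w>0$, is legitimate since with the convention $\lambda\in\mathbf{i}\RR_{>0}$ fixed in Section \ref{sec:di} the denominator $\lambda^{1/(h+1)}x^2+\mu^{1/(h+1)}y^2$ never vanishes in the sectors swept and the Gaussian retains its decay throughout — the integral $P$ becomes exactly the integral of Lemma \ref{le:int_sing}, and, once the Jacobians (including the two-fold covering produced by $x\mapsto x^2$) are accounted for, its value is $-\tfrac12\tfrac{z-w}{z+w}$. Since the identity of Lemma \ref{l_57} is an identity of formal/asymptotic expansions, the restriction $z,w>0$ in this computation is harmless. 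Combining, $\Phi_{2,2}=P+R=-\tfrac12\tfrac{z-w}{z+w}+\bigl(\tfrac12\tfrac{z-w}{z+w}-\widetilde{\phi}_{2,2}(z,w)\bigr)=-\widetilde{\phi}_{2,2}(z,w)$, as desired.

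The hard part will be this last computation of $P$, and in particular the sign. The factor $-1$ in $P=-\tfrac12\tfrac{z-w}{z+w}$ is precisely the factor $(-1)^{\delta_{a+b,4}}$ we want: in $\Phi_{2,2}$ the spectral parameter $\lambda=\lambda_2(z)$ multiplies $x^2$, whereas in Lemma \ref{le:int_sing} it is $w^2$ that multiplies $x$, so matching the two integrals forces an interchange of the two integration variables, which flips the sign of the rational factor $\tfrac{x-y}{x+y}$. Pinning this sign down, rigorously justifying the contour rotation onto the positive real axis, and bookkeeping the branch constants $\kappa,\sqrt\kappa$ are the only delicate points; the remainder is routine and parallels the case $a=1$ and the proof of Proposition \ref{kern_basis}.
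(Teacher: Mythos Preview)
Your proposal is correct and follows essentially the same route as the paper. Both arguments dispose of the cases $(a,b)\in\{(1,1),(1,2)\}$ by the discussion preceding the lemma, then for $(a,b)=(2,2)$ split $\Phi_{2,2}=P+R$ along the decomposition \eqref{exp-Phi_22}, identify the regular piece $R$ by comparing with the term-by-term expansion coming from \eqref{2int_formal} together with Corollary~\ref{cor_phiphi}, and reduce everything to the evaluation $P=\tfrac12\tfrac{w-z}{w+z}$ via Lemma~\ref{le:int_sing}. The paper is terser---it simply notes that the $a^{2,2}_{k,l}$-terms are common to both integrands, so the desired identity is equivalent to the closed-form evaluation of $P$---whereas you phrase the same cancellation as an explicit identification $R=\tfrac12\tfrac{z-w}{z+w}-\widetilde{\phi}_{2,2}(z,w)$; these are two ways of saying the same thing. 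Your remark that the sign $(-1)^{\delta_{a+b,4}}$ arises from the swap of roles between $(z,\lambda)$ and $(w,\mu)$ when matching the integral to Lemma~\ref{le:int_sing} is exactly the mechanism behind the paper's substitution $x=\ii|\lambda|^{1/(h+1)}X^2$, $y=\ii|\mu|^{1/(h+1)}Y^2$.
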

\begin{proof}
It remains to prove that $\widetilde{\phi}_{2,2}(z,w) = -\Phi_{2,2}(\lambda,\mu)$. 
Let us consider the integrand in the last line of (\ref{2int_formal}) in the variables $X$ and $Y$
\be\nn
\frac{1}{2\pi} \, \sqrt{\lambda\mu}\,
e^{-(\lambda X^2 + \mu Y^2)}\left(
2 \sum_{k=0}^\infty r_k \left(\frac{ \lambda^{1/(h+1)} X^2}{ \mu^{1/(h+1)} Y^2} \right)^k+
  \sum_{k,l=0}^\infty
  a_{k,l}^{2,2} (\lambda,\mu) X^k Y^l \right) dX dY.
\ee
Comparing it to (\ref{exp-Phi_22}) from Corollary \ref{cor_phiphi} we see, that the statement of the lemma is equivalent to the identity
\be\nn
\frac{1}{2\pi} \, \sqrt{\lambda\mu}\,
\iint_{\left(e^{-\ii \pi/4}\rr\right)^2}
e^{-(\lambda X^2 + \mu Y^2)}
  \frac{
    \lambda^{1/(h+1)} X^2- \mu^{1/(h+1)} Y^2 }{
    \lambda^{1/(h+1)} X^2+ \mu^{1/(h+1)} Y^2}\, dXdY =
  \frac{1}{2} \,\frac{w-z}{w+z}.
\ee
This identity follows from Lemma \ref{le:int_sing} after the substitution $x=\ii |\lambda|^{1/(h+1)} X^2$,
$y=\ii |\mu|^{1/(h+1)} Y^2$.
\end{proof}

\subsection{Change of variables}
To make a contact with the standard form of the generalized Kontsevich
matrix integrals \cite{AM,KMMMZ,K,W,IZ} let us make a change in
(\ref{vir-psi_2}) of the integration variables ${\bf
  i}z^{h_a/h}x^2\mapsto y$ and of the parameters $\lambda_a\mapsto
\lambda_a(z):= \tfrac{\ii z^{h_a+h_a/h}}{h+1}$, 
\be\nn
\lambda_a(f(x)-u_a) \mapsto -z^{h_a} y +\frac{{\bf i}^h y^{h+1}}{h+1} +\delta_{a,1}  \frac{{\bf i} h z^{h+1}}{h+1}.
\ee
Let 
\be\nn
\chi_a(y,z):=\sqrt{\frac{h_a z^{h_a}}{2}} 
e^{ -z^{h_a} y +
\frac{{\bf i}^h y^{h+1}}{h+1} +
\delta_{a,1}  \frac{{\bf i} h z^{h+1}}{h+1}}
\ee
and 
\be\nn
d \mu_a(y,z):=\chi_a(y,z) \frac{d y}{\sqrt{y}}.
\ee 

Let us discuss the transformation of the integration contours. To
begin with, since we are interested only in the asymptotic expansion,
let us replace the locally defined contour $\gamma_a$ in \eqref{vir-psi_2} with the
infinite contour $\gamma_a(\lambda_a)$. 
In the case when $a=1$, we have $\lambda_1\in \RR_{<0}$
$\Rightarrow$   
$\gamma_1(\lambda_1)=\RR$. Let us choose the solution $z$ of the
equation $\lambda_1=\lambda_1(z)$, such that,  
$\operatorname{Arg}(z)=\tfrac{\pi}{2(h+1)}$. Then, the image of
the integration path $\gamma_1(\lambda_1)$ under the change $y=\ii z
x^2$ is a ray $\ii z \RR_{\geq 0}$ independent of $z$.  
We are interested only in the asymptotic expansion of the integral in the vicinity of the critical point $y=\ii z$, hence we define  the integration path $\gamma_1^*$ for the new variable $y$ to be $\gamma^*_{1}:=\ii z \RR$.

For the second contour, we have  
$\gamma_2(\lambda_2) =e^{-\pi \ii/4} \RR$. In this case $\lambda_2\in \ii
\RR_{>0}$, so the equation $\lambda_2=\lambda_2(z)$ has a unique
solution $z\in \RR_{>0}$. The image of $\gamma_2(\lambda_2)$ under the
change $y=\ii z^{2/h} x^2$ is $\gamma^*_2:=\RR_{\geq 0}$. Note that,
unlike the case of the other contour, here the change of variables yields a 2:1
covering $\gamma_2(\lambda_2)\to \gamma^*_2$. The reason for our choice
in this case comes from the fact that the symmetry
$x\mapsto -x$ of $f(x)$ preserves the critical point $\xi_2=0$.  
Therefore, all integrals below involving integration along $\gamma_2$ gain
an extra factor of 2 when written as integrals along $\gamma_2^*$. 
First of all, note that the asymptotics (\ref{vir-psi_2}) can be written uniformly as
\be\nn\label{vir-psi_a}
\Psi^{(a)}_k(z) \sim  \frac{\ii^{2-a}}{\sqrt{\pi}}
\int_{\gamma_a^*}
 (-{\bf i}y)^k d \mu _a(y,z), \quad z\to \infty,
\ee
where $k\in \ZZ$ for $a=1$ and $k\geq 0$ for $a=2$. Furthermore, let
us make the associated  change of the variables in the double integrals
\eqref{Phi_ab} with 
$1\leq a\leq b\leq 2$. We get that \eqref{Phi_ab} is
transformed into 
\be\nn
\frac{\epsilon_{a,b}}{2\pi} \iint_{\gamma_a^*\times \gamma_b^*}  \frac{x-y}{x+y}
d \mu_a(x,z) d \mu_b(y,w),
\ee
where $\epsilon_{1,1}=-1$, $\epsilon_{1,2}=\ii$, and
$\epsilon_{2,2}=1$. Thus, using Lemma \ref{l_57} we proved
\begin{lemma}\label{doubleint}
Suppose that $1\leq a\leq b\leq 2$. Then  
\be\nn
\widetilde{\phi}_{a,b}(z,w)\sim-\frac{\ii^{a-b}}{2\pi} \iint_{\gamma_a^*\times \gamma_b^*}  \frac{x-y}{x+y}\,
d \mu_a(x,z) d \mu_b(y,w) ,\quad z,w\to \infty.
\ee
\end{lemma}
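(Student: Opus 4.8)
The plan is to deduce the statement from Lemma \ref{l_57} together with the change of variables $\ii z^{h_a/h}x^2\mapsto y$ introduced just above. By Lemma \ref{l_57} we already have $\widetilde{\phi}_{a,b}(z,w)=(-1)^{\delta_{a+b,4}}\,\Phi_{a,b}(\lambda,\mu)$ for $1\leq a\leq b\leq 2$, where $\lambda=\lambda_a(z)$, $\mu=\lambda_b(w)$, and $\Phi_{a,b}(\lambda,\mu)$ is the termwise steepest-descent expansion of the double integral \eqref{Phi_ab}. So it suffices to rewrite \eqref{Phi_ab} in terms of the measures $d\mu_a$, $d\mu_b$; this is exactly the single-variable reduction that turns \eqref{vir-psi_2} into the uniform single-variable formula stated just above, now performed in each of the two integration slots.

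Concretely, in the $x$-slot I would substitute $y=\ii z^{h_a/h}x^2$ (with $\lambda$ set to $\lambda_a(z)$): this sends the exponent $\lambda(f(x)-u_a)$ to $-z^{h_a}y+\tfrac{\ii^h y^{h+1}}{h+1}+\delta_{a,1}\tfrac{\ii h z^{h+1}}{h+1}$ and the contour $\gamma_a(\lambda)$ to $\gamma_a^*$; likewise in the $y$-slot with $z,a$ replaced by $w,b$. The kernel $\tilde K(\lambda^{1/(h+1)}x^2,\mu^{1/(h+1)}y^2)$ then becomes $\tfrac{x-y}{x+y}$ in the new variables, and collecting the Jacobians of the two quadratic substitutions with the prefactors $c_a\sqrt{\lambda}$, $c_b\sqrt{\mu}$ and the normalizations \eqref{c12} turns \eqref{Phi_ab} into $\tfrac{\epsilon_{a,b}}{2\pi}\iint_{\gamma_a^*\times\gamma_b^*}\tfrac{x-y}{x+y}\,d\mu_a(x,z)\,d\mu_b(y,w)$, where the constant $\epsilon_{a,b}$ absorbs the square-root branch factors (all defined via the principal branch of the logarithm) together with a factor $2$ for each slot of index $2$, the latter because there $x\mapsto\ii z^{2/h}x^2$ covers $\gamma_2^*=\RR_{\geq0}$ two-to-one by $\gamma_2(\lambda_2)=e^{-\pi\ii/4}\RR$. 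Carrying out the arithmetic carefully should give $\epsilon_{1,1}=-1$, $\epsilon_{1,2}=\ii$, $\epsilon_{2,2}=1$.

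It then only remains to multiply by the sign $(-1)^{\delta_{a+b,4}}$ from Lemma \ref{l_57}: in the three cases $(a,b)=(1,1),(1,2),(2,2)$ the product $(-1)^{\delta_{a+b,4}}\epsilon_{a,b}$ equals $-1$, $\ii$, $-1$ respectively, which in every case is $-\ii^{a-b}$, and this is the asserted formula, with $\sim$ understood, as everywhere in Section \ref{sec:di}, as equality of the termwise steepest-descent expansions. The only real obstacle I foresee is the bookkeeping of constants and orientations: one must fix consistent branches for $\sqrt{\lambda}$, $\sqrt{\mu}$ and for the square roots appearing in the new variables (compatibly with the constraints $\lambda_1\in\RR_{<0}$, $\lambda_2\in\ii\RR_{>0}$ imposed in defining \eqref{Phi_ab}), keep track of how the quadratic substitutions act on the orientations of the contours, and get the factor $2$ in each index-$2$ slot right; once these are pinned down, the identity is a mechanical substitution. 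The integrable pole of the kernel along $x=-y$ in the case $a=b=2$ needs no further care here, since it has already been dealt with in the proof of Lemma \ref{l_57} via Lemma \ref{le:int_sing}.
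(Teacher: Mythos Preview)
Your proposal is correct and follows essentially the same approach as the paper: apply the change of variables $\ii z^{h_a/h}x^2\mapsto y$ (and its $w,b$ analogue) to the double integral \eqref{Phi_ab}, obtain the constants $\epsilon_{1,1}=-1$, $\epsilon_{1,2}=\ii$, $\epsilon_{2,2}=1$, and then combine with the sign $(-1)^{\delta_{a+b,4}}$ from Lemma \ref{l_57} to get $-\ii^{a-b}$. The paper records this verification immediately after the lemma statement in the same way you do.
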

Let us make two remarks about Lemma \ref{doubleint}. First, let us
recall that the meaning of the asymptotic equality $\sim$ is
that the steepest descent method expansion 
of the integral on the RHS of $\sim$ coincides with the formal series
on the LHS of $\sim$. Second, note that $\epsilon_{a,b}=-\ii^{a-b}(-1)^{\delta_{a+b,4}}$, so the sign by which
$\epsilon_{a,b}$ and $-\ii^{a-b}$ differ matches precisely the sign by
which $\phi_{a,b}(z,w)$ and $\Phi_{a,b}(\lambda,\mu)$ differ.

\subsection{Matrix integral}

The goal of this section is to prove Theorem \ref{t2}. Let us consider
a version of de Bruijn's Pfaffian theorem \cite{B}. Put 
\be\nn
A_{i,j}=\iint_{\gamma_i\times \gamma_j} R(x,y)\phi_i(x)\phi_j(y) dx dy
\ee
for $1\leq i<j\leq 2n$ and $A_{i,j}=-A_{i,j}$ otherwise. Here $\gamma_i$ are some contours.
Then
\begin{lemma}\label{le:Bruijn}
If $R(x,y)=-R(y,x)$, then
\be\nn
\operatorname{Pf} (A)=
\int_{\gamma_1}\dots\int_{\gamma_{2n}}
\operatorname{Pf} (R) \prod_{i=1}^{2n} \phi_i (x_i )dx_1\dots dx_{2n},
\ee
where $\operatorname{Pf} (R)$ is the Pfaffian of the matrix with
entries $R(x_i,x_j)$ ($1\leq i,j\leq 2n$). 
\end{lemma}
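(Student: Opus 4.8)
The plan is to reduce the identity to the combinatorial matching expansion of the Pfaffian together with a relabelling of the integration variables. First I would recall that for the skew-symmetric $2n\times 2n$ matrix $A$ one has
\[
\operatorname{Pf}(A)=\sum_{M}\varepsilon(M)\prod_{\{p,q\}\in M,\ p<q}A_{p,q},
\]
the sum being over the $(2n-1)!!$ perfect matchings $M=\{\{i_1,j_1\},\dots,\{i_n,j_n\}\}$ of $\{1,\dots,2n\}$ with $i_k<j_k$, and $\varepsilon(M)$ the sign of the permutation sending $(1,2,\dots,2n)$ to $(i_1,j_1,\dots,i_n,j_n)$. Into this I would plug the hypothesis $A_{i,j}=\iint_{\gamma_i\times\gamma_j}R(x,y)\phi_i(x)\phi_j(y)\,dx\,dy$ for $i<j$.

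Then, in the term of the expansion indexed by a matching $M$, I would rename the two integration variables of the $k$-th factor $A_{i_k,j_k}$ to $x_{i_k}$ (running over $\gamma_{i_k}$) and $x_{j_k}$ (running over $\gamma_{j_k}$). Since $M$ is a perfect matching, the indices $i_1,j_1,\dots,i_n,j_n$ exhaust $\{1,\dots,2n\}$, so collecting the finitely many terms under a single integral sign turns $\prod_k A_{i_k,j_k}$ into one $2n$-fold iterated integral and yields
\[
\operatorname{Pf}(A)=\int_{\gamma_1}\cdots\int_{\gamma_{2n}}\Big(\sum_{M}\varepsilon(M)\prod_{k=1}^n R(x_{i_k},x_{j_k})\Big)\,\prod_{i=1}^{2n}\phi_i(x_i)\,dx_1\cdots dx_{2n}.
\]
Finally I would observe that, because $R(x,y)=-R(y,x)$ (so $R(x,x)=0$ and $\big(R(x_i,x_j)\big)_{1\le i,j\le 2n}$ is a genuine skew-symmetric matrix), the product $\prod_{k} R(x_{i_k},x_{j_k})$ attached to $M$ does not depend on how each pair is oriented; hence the bracketed inner sum is exactly the matching expansion of $\operatorname{Pf}\big(R(x_i,x_j)\big)=\operatorname{Pf}(R)$, which is the asserted formula.

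The only point requiring care is the interchange of the order of integration used to turn the product of double integrals into a single iterated $2n$-fold integral; this is legitimate whenever the integrands are absolutely integrable on the chosen contours, and in the present paper it is to be read in the formal steepest-descent sense, where it is merely a rearrangement of formal asymptotic series and is automatic. Everything else is routine sign bookkeeping in the Pfaffian expansion, so I do not expect any genuine obstacle here.
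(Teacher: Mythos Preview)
Your argument is correct and is precisely the unwinding that the paper has in mind: the paper's own proof is the single sentence ``The lemma follows immediately from the definition of Pfaffian,'' and your matching expansion plus relabelling of integration variables is exactly that. There is no difference in approach to report.
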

The lemma follows immediately  from the definition of Pfaffian.

Let us first recall Proposition \ref{tau_as_Pf}. The entries of the matrix $\tilde{\Phi}(Z_1,Z_2)$ are
described by Lemma \ref{doubleint}, that is, the matrix is divided
into 4 blocks and the $(i,j)$-entry in the $(a,b)$-block is given by
the asymptotic expansion 
\be\nn
\tilde{\Phi}^{ab}_{i,j}\sim
-\frac{1}{2\pi}\iint_{\gamma_{a}^*\times \gamma_{b}^*}
\frac{x-y}{x+y}\,
d \mu_a(x,z_{a,i}) d \mu_b(y,z_{b,j}).
\ee
Note that the power of $\ii$ of the expression in Lemma
\ref{doubleint} is in agreement with the extra factor of $\ii$ in the
definition of the $(1,2)$-block in formula (\ref{phi12}). Recall
that $N_1+N_2$ is even and 
let us apply Bruijn's formula from Lemma \ref{le:Bruijn}.
We get 
\be
\nn
\operatorname{Pf}(\tilde{\Phi}(Z_1,Z_2))\sim
\frac{1}{(-2\pi)^{\frac{N_1+N_2}{2}}}
\int_{{(\gamma_1^*)^{N_1}}}\int_{{(\gamma_2^*)^{N_2}}}
\operatorname{Pf} (\tilde{K})\, \prod_{j=1}^{N_1}d \mu_1 (x_j,z_{1,j})  \prod_{j=N_1+1}^{N_1+N_2}d \mu _2(x_j,z_{2,j-N_1}).
\ee
Recall that  $\tilde{K}_{i,j}=\tilde{K}(x_i,x_j)=\frac{x_i-x_j}{x_i+x_j}$. Using
the Schur Pfaffian formula
\be
\nn
\operatorname{Pf} (\tilde{K})=\prod_{i<j}^{N_1+N_2}\frac{x_i-x_j}{x_i+x_j},
\ee
and the skew-symmetry with respect to the permutations of $x_i$ and
$x_j$ for $i \neq j$, we get 
\be
\label{Pf-asymp}
\operatorname{Pf}(\tilde{\Phi}(Z_1,Z_2))\sim\frac{1}{(-2\pi)^{\frac{N_1+N_2}{2}}N_1!N_2!}\int_{{(\gamma_1^*)^{N_1}}}\, \int_{{(\gamma_2^*)^{N_2}}}
Q\,  \prod_{j=1}^{N_1}\frac{dx_j}{\sqrt{x_j}} \prod_{j=1}^{N_2} \frac{dy_j}{\sqrt{y_j}},
\ee
where we have denoted $y_j:=x_{N_1+j}$ and
\be\label{Q_integr}
Q:=\Delta^*_{N_1}(x) \Delta^*_{N_2}(y) \det_{i,j=1}^{N_1} \chi_1(x_i,z_{1,j}) \det_{i,j=1}^{N_2}\chi_2(y_i,z_{2,j})  \prod_{i=1}^{N_1}\prod_{j=1}^{N_2} \frac{x_i-y_j}{x_i+y_j}.
\ee
Here 
\be
\nn
\Delta^*_N(x):=\prod_{i<j}^N\frac{x_i-x_j}{x_i+x_j}.
\ee
Note that 
\be\nn
\det_{i,j=1}^{N_1} \chi_1(x_i,z_{1,j})= 
 \det \sqrt\frac{h Z_1^h}{2} \, 
e^{\operatorname{Tr}\left(
h \lambda_1(Z_1) + \frac{\ii^h x^{h+1} }{h+1} \right)}\, 
\det_{i,j=1}^{N_1} e^{-x_i z_{1,j}^h}
\ee
and 
\be\nn
\det_{i,j=1}^{N_2} \chi_2(y_i,z_{2,j})= 
\det Z_2 \, 
e^{\operatorname{Tr}\left(
\frac{
\ii^h y^{h+1} }{ h+1} \right)}\, 
\det_{i,j=1}^{N_2} e^{-y_i z_{2,j}^2},
\ee
where $\lambda_1(Z_1)=\tfrac{\ii Z_1^{h+1}}{h+1}$, $x=\diag
(x_1,x_2,\dots,x_{N_1}),$ and $y=\diag(y_1,y_2,\dots,y_{N_2})$.
Let us recall the Harish-Chandra--Itzykson--Zuber integral formula for
unitary matrices
\be\nn
\int_{U(N)} \left[d U\right] e^{-\Tr U A U^\dagger B}=
C(N) \frac{\det_{i,j=1}^N e^{-a_i b_j}}{\Delta_N (a) \Delta_N (b)}
\ee
where $C(N)$ depends only on $N$, $ \left[d U\right]$ is the Haar measure on $U(N)$, and
$\Delta_N (a) =\prod_{i<j}(a_j-a_i)$ is the Vandermonde determinant.
Using the above formula to rewrite the determinants in
(\ref{Q_integr}) as integrals over unitary groups, we get
\be
\nn
Q= \det \sqrt\frac{h Z_1^h}{2}  \det Z_2  e^{h \Tr\lambda_1(Z_1)+\frac{{\bf i}^h}{h+1}\left(\sum_{j=1}^{N_1} x_i^{h+1}+\sum_{j=1}^{N_2} y_i^{h+1}\right)}\\
\times\frac{1}{C(N_1)C(N_2)} \int_{U(N_1)} \left[d U_1\right] e^{-\Tr U_1 x U_1^\dagger Z_1^h} \int_{U(N_2)} \left[d U_2\right] e^{-\Tr U_2 y U_2^\dagger Z_2^2}\\
\times \Delta_{N_1}(x)  \Delta_{N_1}(Z_1^h )  \Delta_{N_2}(y)  \Delta_{N_2}(Z_2^2 ) \Delta^*_{N_1}(x) \Delta^*_{N_2}(y)   \prod_{i=1}^{N_1}\prod_{j=1}^{N_2} \frac{x_i-y_j}{x_i+y_j}.
\ee
Substituting the above expression for $Q$ in \eqref{Pf-asymp} and
recalling the definition of ${\mathcal H}_N$ and ${\mathcal H}_N^+$
equipped respectively with measures $\widetilde{\left[d X\right]}$ and $\widetilde{\left[d Y\right]} $ (see Section
\ref{sec:mm}), we get that $\operatorname{Pf}(\tilde{\Phi}(Z_1,Z_2))$
coincides with the
asymptotic expansion near $X={\bf i}Z_1$ and $Y= 0$ of the following
two-matrix integral  
\be
\nn
\widetilde{C}(h,N_1,N_2)\ 
 \Delta_{N_1}(Z_1^h ) \  \Delta_{N_2}(Z_2^2 ) \  (\det Z_1)^{h/2} \ \det Z_2  \\
 \times  e^{h \Tr\lambda_1(Z_1)}\ 
 \int _{e^{\frac{(h+2)\pi}{2(h+1)}\ii}{\mathcal H}_{N_1}}   {\widetilde{\left[d X\right]} \, e^{\Tr W(X,Z_1^h) } } \int _{{\mathcal H}_{N_2}^+}{\widetilde{\left[d Y\right]}\, e^{\Tr W(Y,Z_2^2)}} S(X,Y),
\ee
where $W$ is given by (\ref{eq_Wpot}), $\widetilde{C}(h,N_1,N_2)$ is a
numerical constant depending only on $h$, $N_1$, and $N_2$,
and we used the formula
\be\nn
\det(X\otimes I_N+I_N\otimes X) = \prod_{i=1}^{N} 2x_i \
\prod_{1\leq i<j\leq N} (x_i+x_j)^2,
\ee
where $I_N$ is the $N\times N $ unit matrix, and
$X=\diag(x_1,\dots,x_N)$ is a diagonal matrix. We also have
\be\nn
S(X,Y)=\prod_{i=1}^{N_1}\prod_{j=1}^{N_2} \frac{x_i-y_j}{x_i+y_j} =
\det \left(\frac{
    X\otimes I_{N_2}-I_{N_1}\otimes Y}{
    X\otimes I_{N_2}+I_{N_1}\otimes Y}
  \right).
\ee
The formula stated in Theorem \ref{t2} follows with 
normalization factor 
\be\label{formula-N}
\mathcal{N}=
C(h,N_1,N_2)\, 
\frac{
  \Delta_{N_1}^*(Z_1)\Delta_{N_2}^*(Z_2) }{
 \Delta_{N_1}(Z_1^h ) \  \Delta_{N_2}(Z_2^2 ) \  (\det Z_1)^{h/2} \ \det Z_2 },
\ee
where the numerical constant 
$C(h,N_1,N_2)= \ii^{N_1^2} 2^{-(N_1+N_2)/2}/
\widetilde{C}(h,N_1,N_2)$.

\section{Examples and further comments}

\ 

1)
We conjecture, that an extended open version of the $D_N$ generating
function can be obtained by a simple deformation of the measure given by 
\be\nn
\widetilde{\left[d X\right]}\, e^{n \Tr \log X},
\ee
where $n$ is a formal parameter. It would be interesting to compare such a matrix integral with the results of Basalaev and Buryak \cite{BB}.

2)
Let $\mathcal{D}(\hbar,\mathbf{t})$ be the total descendent potential
  of the $D_N$ singularity. Theorem \ref{t2} gives a matrix integral
  for the total descendent potential in the Miwa variables for
  $\hbar=\rho_1^2 = - e^{2\pi\ii/h}$, where
  $h=2N-2$. On the other hand, using  the dilation equation and the
  $L_0$-constraint \cite{CM}, or the mirror symmetry from
  Section \ref{Mirror} and the dimension constraint for
  FJRW-invariants from Section \ref{sec:FJRW-inv}, one can show that
  to recover the 
  $\hbar$ dependence it is enough to rescale the $t$ variables 
\be
\nn
\left.{\mathcal D}(\hbar, {\bf t}^{\rm SG} )=
{\mathcal D}(\rho_1^2, {\bf t}^{\rm SG} )
\right|_{
  t_{k,s}\mapsto
  (\sqrt{\hbar}/\rho_1)^{\frac{h}{h+1}\left(
k+\operatorname{deg}(\phi_s)-1\right) } t_{k,s}}.
\ee
Since $\tau^{\rm CM}(\mathbf{t})=
{\mathcal D}(\rho_1^2, {\bf t}^{\rm SG} ),$ where the relation between
$\mathbf{t}^{\rm SG}=(t_{k,s}^{\rm SG})$ and $\mathbf{t}=(t_{a,m})$ is
given by \eqref{y_1m}--\eqref{KW-BKP}, the total
descendant potential can be expressed in terms of the tau-function as
follows: 
\be\nn
\left.
{\mathcal D}(\hbar, {\bf t}^{\rm SG} )=
\tau( {\bf t} )
\right|_{
  t_{a,m}\mapsto
  (\sqrt{\hbar}/\rho_1)^{ \frac{mh}{h_a(h+1)}-1 } t_{a,m} }.
\ee
Let us define the Miwa parametrization of the total descendant
potential by 
\be\nn
{\mathcal D}(\hbar, Z_1, Z_2):=
\left.{\mathcal D}(\hbar, {\bf t}^{\rm SG} )\right|_{
t_{a,m} = -\frac{2}{m}  \frac{\sqrt{\hbar}}{\rho_1}  \Tr Z_a^{-m}}.
\ee
Then we have 
${\mathcal D}(\hbar, Z_1, Z_2)= 
\tau(
 (\sqrt{\hbar}/\rho_1)^{-\frac{1}{h+1}} Z_1, 
 (\sqrt{\hbar}/\rho_1)^{-\frac{h}{2(h+1)}}Z_2)$. 
Thus the total descendent potential in the Miwa parametrization can be
identified with the asymptotic expansion of the following integral: 
\be\nn
\frac{ 
e^{\frac{\rho_1h}{\sqrt{\hbar} } \Tr\lambda_1(Z_1)} 
}{
(\sqrt{\hbar}/\rho_1)^{\frac{N_1^2}{2}+\frac{N_2^2}{2}} {\mathcal N}
} 
\int _{\rho_1^{-\frac{1}{h+1}}e^{\frac{(h+2)\pi}{2(h+1)}\ii}{\mathcal H}_{N_1}}   \widetilde{\left[d X\right]} \,
e^{\frac{\rho_1}{\sqrt{\hbar} }\Tr W(X, Z_1^h)}  
\int _{\rho_1^{-\frac{1}{h+1}} {\mathcal H}_{N_2}^+} \widetilde{\left[d Y\right]}\,  
e^{\frac{\rho_1}{\sqrt{\hbar} }\Tr W(Y,Z_2^2)}S(X,Y),
\ee
where the above integral is obtained from the matrix integral in
Theorem \ref{t2} via rescaling 
$Z_a\mapsto Z_a (\sqrt{\hbar}/\rho_1)^{-\frac{h}{h_a(h+1)}}$ and changing
the integration variables via
$X:= (\sqrt{\hbar}/\rho_1)^{-\frac{1}{h+1}} X'$ and 
$Y:=(\sqrt{\hbar}/\rho_1)^{-\frac{1}{h+1}} Y' .$

3) The FJRW invariants are known to be rational numbers. Using the
identification \eqref{FJRW=SG} between the SG-correlators and the
FJRW-correlators and the Euler characteristic constraint (ii) for the
FJRW invariants (see Section \ref{sec:FJRW-inv}), we get that the
coefficients of the total descendant potential
$\mathcal{D}(\hbar,\mathbf{t}^{\rm SG})$, that is, the coefficients in front of
monomials in $\hbar$ and $\mathbf{t}^{\rm SG} $ are rational
numbers. Specializing $\hbar=\rho_1^2$ and using the Euler
characteristic constraint again, it is easy to see that the
tau-function in the Miwa parametrization satisfies the following
condition: $\tau(\ii Z_1,Z_2)\in \QQ[\![Z_1^{-1}, Z_2^{-1}]\!]$. 

4) The normalization factor $\mathcal{N}$ in Theorem \ref{t2} can be
represented by the following matrix integral:
\be
\nn
{\cal N}:=
\frac{1}{\prod_{i,j=1}^{N_1}(\ii z_{1,i}+\ii z_{1,j})^\frac{1}{2}}
\int _{e^{\frac{(h+2)\pi}{2(h+1)}\ii}{\mathcal H}_{N_1}}  \left[d X\right] \,
e^{-\frac{\ii}{2}\Tr  \sum_{k=0}^{h-1}XZ_1^kXZ_1^{h-k-1}}
\int _{{\mathcal H}_{N_2}^+} \widetilde{\left[d Y\right]}\,  e^{-\Tr YZ_2^2 } .
\ee

5) If $N_a=0$ for $a=1$ or $a=2$, the tau-function $\tau(Z_1,Z_2)$
reduces to a tau-function of 1-component BKP hierarchy. These
tau-functions can be described by one-matrix models:  

$\bullet$ $N_1=0$
\be\nn
\tau(0,Z_2)\sim \frac{\int _{{\mathcal H}_{N_2}^+} \widetilde{\left[d Y\right]}\,  e^{\Tr W(Y,Z_2^2) } }{ \int _{{\mathcal H}_{N_2}^+} \widetilde{\left[d Y\right]}\,  e^{-\Tr YZ_2^2 } }.
\ee

$\bullet$ $N_2=0$
\be
\nn
\tau(Z_1,0)\sim \frac{ e^{h \Tr\lambda_1(Z_1)}\int _{e^{\frac{(h+2)\pi}{2(h+1)}\ii}{\mathcal H}_{N_1}}  \widetilde{\left[d X\right]} \, e^{\Tr W(X,Z_1^h)  } }{\frac{1}{\prod_{i,j=1}^{N_1}(\ii z_{1,i}+\ii z_{1,j})^\frac{1}{2}} \int _{e^{\frac{(h+2)\pi}{2(h+1)}\ii}{\mathcal H}_{N_1}}   \left[d X\right] \, e^{-\frac{\ii}{2}\Tr \sum_{k=0}^{h-1}XZ_1^kXZ_1^{h-k-1}} }.
\ee
To describe the corresponding point of the Sato Grassmannian for $N_2=0$ it is not enough to restrict the operators $a$ and $b$ on the first component of 2-BKP Grassmannian. Namely, it is easy to see that in this case $b_1$ is a not a Kac--Schwarz operator. However,
$a_1$, $a_1 b_1$, $a_1 b_1^2-3/2 b_1$ are the Kac--Schwarz operators. We claim that these three operators generate the Kac--Schwarz algebra in this case.

6) 
The measure $\widetilde{\left[d X\right]}$ is a natural measure for the so called $O(1)$ matrix model introduced by Kostov \cite{Kos} and related to the Bures ensemble (see, e.g., \cite{FK}).
The denominator can be simplified with an auxiliary Hermitian matrix integral
\be\nn\label{auxil}
\frac{1}{\sqrt{\det\left(X\otimes I_N+I_N\otimes X\right)}}=\int_{{\mathcal H}_N} [d A] e^{-\Tr X A^2}.
\ee
Similarly, we can rewrite the interaction term $S(X,Y)$ as a Gaussian
integral. The integrals in Theorem \ref{t2} can be
rewritten using the above integral as follows:
\be
\nn
\int \left[d X\right] \int \left[d A\right]
e^{-\Tr\left(XZ ^{h_i} -\frac{{\bf i}^h}{h+1}X^{h+1}+X A^2\right)}=
\int \left[d X\right] \int \left[d A\right]
e^{-\Tr\left(XZ ^{h_i} +2\frac{{\ii}^{h/2}}{\sqrt{h+1}} A X^{h/2+1}+X A^2\right)},
\ee
where we made a change of the $A$ matrix variable, $A \mapsto
A+\frac{{\ii}^{h/2}}{\sqrt{h+1}}X^{h/2}$. This allows us to simplify
the integral. In particular, for $h=2$ the integral over $X$ is Gaussian and can be
computed explicitly.  If we ignore the coefficients for the moment, then the potential of the last integral is of the form
\be\nn
x z^{h_i} + \frac{1}{N} a x^{N} +xa^2 =\int (z^{h_i}+W(x,a)) dx,
\ee
where $W(x,y)=x^{N-1}y+y^2$ is the potential, corresponding to the
singularity $D^T_N$, see Section \ref{sec:FJRW-inv}. 
We expect that the potentials of the matrix integrals for other
FJRW theories, in particular for  $E_N$ case, can be obtained from the
corresponding polynomials $W$ of $E_N^T$ in a similar way.

7)
It may be interesting to consider the matrix integral from Theorem \ref{t2} for the negative values of  $h$, therefore the negative values of $N$. For the matrix model in the $A_N$ case they correspond, in particular, to the Brezin--Gross--Witten
model which was recently shown by Norbury \cite{N} to describe
interesting intersection theory on moduli spaces of Riemann surfaces. We expect that a suitable version of the matrix integral in Theorem \ref{t2} for the negative values of $h$ is also related to some interesting enumerative geometry invariants. 

8)
We expect, that the complete set of W-constraints for the $D_N$ singularities, earlier derived in \cite{BM}, can be obtained from the 
invariance of the matrix integral with respect to the arbitrary
holomorphic change of integration variable. The computation should be
similar to the derivation of the Virasoro constraint for cubic
Kontsevich integral \cite{KMMMZ}, but more involved. Moreover, these constraints should also follow from the Kac--Schwarz description and boson-fermion correspondence. 
It would be interesting to solve these constraints in terms of the
cut-and-join operators, similar to the $A_N$-case described
in \cite{A,Z}.

9)
Kontsevich matrix integral was derived by Kontsevich \cite{K} using
diagram technique, which  can be related to the Strebel differentials
and a cell decomposition of the moduli space. 
Here one can invert the logic, and expand the matrix integral getting
the combinatorial model in terms of ribbon graphs. 
The diagram interpretation of the coefficients in the generating series 
should lead to the combinatorial interpretation of FJRW invariants for
$D_N$-singularity case.

10)
Obtained construction can be used to derive Kontsevich type integrals for other 
interesting tau-functions of BKP and  multicomponent BKP hierarchies.  

11) Our matrix integral should have a ``discrete" counterpart, which is expected to be also given by a certain type of 2-BKP tau-functions
with a natural neutral fermion description. 
In particular, correlators involving neutral fermions were used also by Harnad--van de
Leur--Orlov to construct 5 types of tau-functions
$Z_i(\mu,\mathbf{t},\overline{\mathbf{t}})$ of 2-BKP (see \cite{HvLO},
Section 3).  The third type, that is, $i=3$ in the notation of Section
3 in \cite{HvLO}, in the case when $a^c(\mathbf{z})=1$, seems to be
comparable to our matrix model. Note however, that the total descendent
potential and the tau-function $Z_3(\mu,
\mathbf{t},\overline{\mathbf{t}})$ have completely different nature,
so they seem to be unrelated. In the case of the total descendent
potential, the tau-function in Miwa parametrization is expressed in
terms of integrals  
\eqref{Pf-asymp} via their stationary phase asymptotic expansions. On the
other hand, the tau-function
$Z_3(\mu,\mathbf{t},\overline{\mathbf{t}})$ does not involve Miwa
parametrization, it is an infinite sum of multiple integrals
$I_3(N,\mathbf{t},\overline{\mathbf{t}}) $ of all possible dimensions
$N$. If we change to Miwa variables, then the integrals
$I_3(N,\mathbf{t},\overline{\mathbf{t}}) $ (see \cite{HvLO}, Section 3) become integrals of
hypergeometric type and they are quite different from \eqref{Pf-asymp}. 
Nevertheless, it will be interesting to find out 
if our tau-function has a relation to the models of  \cite{HvLO}. Such a relation could
allow us to give an alternative proof of Theorem \ref{t2} based on
Givental's higher-genus reconstruction formalism.  We expect, that the integrals of the two types can be related by a multi-scaling limit, 
similar to the relation between ``discrete" and ``continuous" matrix integral descriptions of the $A_N$ case.

12) The Schur Q-functions constitute a natural basis for the expansion of the BKP tau-functions. It would be interesting to find the expansion of the tau-function for the simple singularity of type D.

\section{Conflict of interest statement}

On behalf of all authors, the corresponding author states that there is no conflict of interest.

\Addresses

\end{document}